\newcommand{\la}{\lambda}
\newcommand{\al}{\alpha}
\newcommand{\be}{\beta}
\newcommand{\ve}{\varepsilon}
\newcommand{\R}{\mathbb{R}}
\newcommand{\T}{\mathbb{T}}
\newcommand{\f}{\forall}
\newcommand{\n}[1]{\Vert #1\Vert }
\newcommand{\bn}[1]{\big \Vert #1 \big \Vert }
\newcommand{\bbn}[1]{\Big\Vert #1 \Big \Vert }
\newcommand{\lr}[1]{\left\{ #1 \right\} }
\newcommand{\lrc}[1]{\left[ #1 \right] }
\newcommand{\lrs}[1]{\left( #1 \right) }
\newcommand{\lra}[1]{\langle #1 \rangle }
\newcommand{\blra}[1]{\big\langle #1\big\rangle }
\newcommand{\bblra}[1]{\Big\langle #1\Big\rangle }
\newcommand{\abs}[1]{|#1| }
\newcommand{\babs}[1]{\big | #1 \big|}
\newcommand{\bbabs}[1]{\Big | #1 \Big|}
\newcommand{\wt}[1]{\widetilde{#1}}
\newcommand{\pa}{\partial}
\newcommand{\ol}{\overline}
\begin{document}

\newtheorem{theorem}{Theorem}[section]
\newtheorem{lemma}[theorem]{Lemma}
\theoremstyle{definition}
\newtheorem{definition}[theorem]{Definition}
\newtheorem{example}[theorem]{Example}
\newtheorem{remark}[theorem]{Remark}

\numberwithin{equation}{section}
\newtheorem{proposition}[theorem]{Proposition}
\newtheorem{corollary}[theorem]{Corollary}

\renewcommand{\figurename}{Fig.}

\title[Stability of the inhomogeneous sheared Boltzmann equation]{Global stability of the inhomogeneous sheared Boltzmann equation in torus}

\author[R.-J. Duan]{Renjun Duan}
\address[R.-J. Duan]{Department of Mathematics, The Chinese University of Hong Kong,
Shatin, Hong Kong, P.R.~China}
\email{rjduan@math.cuhk.edu.hk}

\author[S.-Q. Liu]{Shuangqian Liu}
\address[S.-Q. Liu]{School of Mathematics and Statistics, and Key Lab NAA-MOE, Central China Normal University, Wuhan 430079, P.R.~China}
\email{sqliu@ccnu.edu.cn}

\author[S.-L. Shen]{Shunlin Shen}
\address[S.-L. Shen]{School of Mathematical Sciences, University of Science and Technology of China, Hefei 230026,
Anhui Province, P.R.~China}
\email{slshen@ustc.edu.cn}

\begin{abstract}
Homoenergetic solutions to the spatially homogeneous Boltzmann equation have been extensively studied, but their global stability in the inhomogeneous setting remains challenging due to unbounded energy growth under self-similar scaling and the intricate interplay between spatial dependence and nonlinear collision dynamics. In this paper, we introduce an approach for periodic spatial domains to construct global-in-time inhomogeneous solutions in a non-conservative perturbation framework, characterizing the global dynamics of growing energy. The growth of energy is shown to be governed by a long-time limit state that exhibits features not captured in either the homogeneous case or the classical Boltzmann theory.  The core of our proof is the derivation of new energy estimates specific to the Maxwell molecule model. These estimates combine three key ingredients: a low-high frequency decomposition, a spectral analysis of the matrix associated with the second-order moment equation, and a crucial cancellation property in the zero-frequency mode of the nonlinear collision term. This last property bears a close analogy to the null condition in nonlinear wave equations.
\end{abstract}


\subjclass[2020]{Primary: 35Q20, 35B35, 35B40; Secondary: 76P05, 76E05}


\keywords{Boltzmann equation, Maxwell molecule, uniform shear flow, non-equilibrium stationary state, spatially inhomogeneous perturbation, global stability, self-similar convergence, growing energy}

\maketitle

\thispagestyle{empty}
\tableofcontents

\section{Introduction}
\subsection{The Homogeneous Boltzmann
Equation for Uniform Shear Flow}
The Boltzmann equation, a fundamental model in kinetic theory, provides a detailed description of rarefied gas dynamics by accounting for particle interactions. Within the framework of the Boltzmann equation,
 the study of uniform shear flow (USF) presents a specialized and intriguing scenario. Here, shearing motion and the induced viscous heating drive the system away from the Maxwellian equilibrium, resulting in a monotonic increase in energy and temperature over time.  Given the relevance of such physical configurations in various applications, understanding the global existence and long-time behavior of the solutions to the Boltzmann equation for USF is of significant interest.

The spatially homogeneous Boltzmann equation for USF is given by:
\begin{equation}\label{equ:USF}
\pa_{t}F- \alpha v_{2}\pa_{v_{1}}F = Q(F, F),
\end{equation}
where $F = F(t, v) \geq 0$ is the velocity distribution function for particles with velocity $v = (v_{1}, v_{2}, v_{3}) \in \mathbb{R}^3$ at time $t \geq 0$, and $\alpha > 0$ is the shear rate.

The Boltzmann collision operator $Q(F_{1},F_{2})$, which describes the effect of binary collisions between particles, admits a standard decomposition into gain and loss terms
\begin{equation*}
Q(F_{1},F_{2})=Q^{+}(F_{1},F_{2})-Q^{-}(F_{1},F_{2})
\end{equation*}
where the gain term is
\begin{align*}
Q^{+}(F_1, F_2)(v) = \int_{\mathbb{R}^3} \int_{\mathbb{S}^2} B(v_{*}-v,\omega)  F_1(v') F_2(v_{*}')  d\omega dv_* ,
\end{align*}
and the loss term is
\begin{align*}
Q^{-}(F_1, F_2)(v) = \int_{\mathbb{R}^3} \int_{\mathbb{S}^2} B(v_{*}-v,\omega)   F_1(v) F_2(v_{*})  d\omega dv_* ,
\end{align*}
with the pre-collision and post-collision velocities related by
\begin{equation}\label{equ:pre,post,velocity}
\left\{
\begin{aligned}
v_*' = &v_* - [(v_* - v) \cdot \omega] \omega, \\
v' =& v + [(v_* - v) \cdot \omega] \omega.
\end{aligned}
\right.
\end{equation}

Notably, the macroscopic moment equations governing the system \eqref{equ:USF} exhibit a distinct structure compared to those derived from the classical Boltzmann equation. To be precise, they are given by
\begin{equation}\label{equ:conservation law}
\left\{
\begin{aligned}
&\frac{d}{dt} \int_{\mathbb{R}^3} F  dv = 0, \\
&\frac{d}{dt} \int_{\mathbb{R}^3} v_{1} F  dv + \alpha \int_{\mathbb{R}^3} v_{2} F  dv = 0, \\
&\frac{d}{dt} \int_{\mathbb{R}^3} v_i F  dv = 0, \quad i = 2, 3, \\
&\frac{d}{dt} \int_{\mathbb{R}^3}  |v|^2 F  dv + 2\alpha \int_{\mathbb{R}^3} v_{1} v_{2} F  dv = 0.
\end{aligned}
\right.
\end{equation}
From these moment equations \eqref{equ:conservation law}, without loss of generality we may assume the conservation laws for mass and momentum
\begin{align*}
\int_{\R^{3}}F(t,v)dv=1,\quad \int_{\R^{3}}v_{i}F(t,v)dv=0,\ i=1,2,3,\ \f t\geq 0.
\end{align*}
However, the energy dynamics present a more complex behavior from the last equation of \eqref{equ:conservation law}. The key coupling term $2\al\int_{\R^{3}}v_{1}v_{2}F(v)dv$ in the energy equation reflects the interplay between shear flow and energy transport.
From a physical perspective, this mathematical structure captures the essential thermodynamics of shear-driven systems. Specifically, while mass and momentum remain strictly conserved, the energy grows monotonically due to irreversible viscous heating. Asymptotic analysis suggests this leads to unbounded energy growth in the long-time limit $t\to \infty$, consistent with the expected behavior of such non-equilibrium systems.

To elucidate these phenomena quantitatively, we focus on the Maxwell molecule model with collision kernel
\begin{align*}
B(v_{*}-v,\omega)=B_{0}(\cos \theta),\quad \cos \theta:=\frac{v_{*}-v}{|v_{*}-v|}\cdot \omega,
\end{align*}
 under the Grad's angular cutoff assumption
\begin{align*}
0\leq  B_{0}(\cos \theta)\lesssim |\cos\theta|.
\end{align*}
The Maxwell molecule model is particularly interesting and illuminating as it admits exact self-similar solutions that preserve the dynamics of energy growth.

\subsection{Self-similar Formulation}
The Maxwellian molecule case possesses a unique mathematical structure that allows the reduction of the Cauchy problem for \eqref{equ:USF} to the study of stationary self-similar solutions via an appropriate temperature scaling transformation. This fundamental property stems from the scaling invariance of the collision operator for Maxwell molecules.

More precisely, the self-similar ansatz takes the form (see for example \cite[Chapter 2]{GS03}
and \cite[Section 5.1]{JNV19})
\begin{equation}\label{equ:self-similar solution}
F(t, v) = e^{-3 \beta t} G \left( \frac{v}{e^{\beta t}} \right)
\end{equation}
where the scaling exponent $\beta$ represents the energy growth rate, and the self-similar profile $G = G(v)$ satisfies the stationary equation
\begin{equation}\label{equ:steady profile,G}
\left\{
\begin{aligned}
&- \beta \nabla_v \cdot (v G) - \alpha v_{2} \pa_{v_{1}} G = Q(G, G),\\
&\int_{\R^{3}}G(v)dv=1,\quad \int_{\R^{3}}vG(v)dv=0,\quad \int_{\R^{3}}|v|^{2}G(v)dv=3.
\end{aligned}
\right.
\end{equation}
Through moment analysis, we derive the exact relationship between the shear strength $\al$ and the energy growth rate $\beta$
\begin{align*}
\int_{\mathbb{R}^3} \left[1, v, |v|^2\right] \left\{ -\beta \nabla_v \cdot (v G) - \alpha v_{2} \pa_{v_{1}} G \right\}  {d}v = 0,
\end{align*}
which yields the identity
\begin{equation}\label{equ:beta,alpha}
\beta = -\alpha \frac{\int_{\mathbb{R}^3} v_{1} v_{2} G dv}{\int_{\mathbb{R}^3} |v|^2 G dv}=
-\frac{\alpha}{3}\int_{\mathbb{R}^3} v_{1} v_{2} G dv.
\end{equation}

Recently, the study of regularity properties and large-velocity asymptotics of \eqref{equ:steady profile,G} has been carried out in \cite{DL21} via a perturbative analysis. To be specific, it shows that
\begin{align}\label{equ:G,profile,1,order}
G(v)=\mu(v)+\al \mu^{\frac{1}{2}}(v)G_{1}(v).
\end{align}
where the correction term satisfies the asymptotic behavior
\begin{align}\label{equ:G,profile,velocity}
\mu^{\frac{1}{2}}(v)G_{1}(v)\sim \frac{2}{b_{0}}v_{1}v_{2}\mu(v),\quad \n{w_{l}\mu^{\frac{1}{2}}G_{1}(v)}_{L_{v}^{\infty}}\leq C_{l}.
\end{align}
Here,
$\mu(v)$ denotes the global Maxwellian distribution
\begin{align*}
\mu(v):=\frac{1}{(2\pi)^{\frac{3}{2}}}e^{-\frac{|v|^{2}}{2}},
\end{align*}
and $w_{l}(v):=\lra{v}^{2l}=(1+|v|^{2})^{l}$ is a polynomial weight function. The constant $b_{0}$
is defined as
\begin{equation}\label{equ:constant,b0}
b_0=3 \pi \int_{-1}^1 B_0(z) z^2\left(1-z^2\right) d z.
\end{equation}
In general, $G(v)$ has to be anisotropic in $v$ due to the shearing
motion,
and any $l$-th order velocity moments of $G(v)$ are finite as long as the
shear rate $\al$ is small enough.  A crucial feature of this self-similar profile, supported by Monte Carlo simulations \cite{GS03}, is its polynomial tail behavior at large velocities. This slow decay property contrasts with the exponential decay of the global Maxwellian distribution.

\subsection{Inhomogeneous Problems and Main Results}
A classical problem in the study of the inhomogeneous Boltzmann equation concerns the stability of the global Maxwellian equilibrium. The first major result in this direction was established by Arkeryd, Esposito, and Pulvirenti in their seminal work \cite{AEP87}. A natural and important subsequent step, motivated by the ubiquitous presence of shear flows in kinetic theory, is to investigate the stability of the self-similar, non-equilibrium profile $G(v)$ that emerges in the uniform shear flow setting.

The inhomogeneous problem presents not only mathematical difficulties but also introduces novel physical phenomena that arise from the intricate interplay between spatial dependence and collisional dynamics.
 To gain insight into these effects, we study the USF Boltzmann equation on a periodic spatial domain
\begin{equation}\label{equ:inhomogeneous,Boltzmann,shear}
\left\{
\begin{aligned}
\pa_{t}F + v\cdot \nabla_{x}F - \alpha v_{2}\pa_{v_{1}}F =& Q(F, F),\\
F(0,x,v)=&F_{0}(x,v), \quad (x,v)\in \T^{3}\times \R^{3}.
\end{aligned}
\right.
\end{equation}

An intriguing phenomenon in this inhomogeneous setting is the long-time behavior of the energy dynamics. The energy growth is characterized by a long-time limit state that exhibits features that are unobservable in either the homogeneous case or the classical Boltzmann equations.

To formulate our result, we introduce the following notation
 \begin{align*}
  P_{0}^{x}\phi=:\int_{\T^{3}}\phi(x)dx,\quad P_{\neq 0}^{x}\phi=:\phi-P_{0}^{x}\phi.
 \end{align*}
We now present our main theorem.

\begin{theorem}\label{thm:main theorem}
Let $G(v)$ be the self-similar profile with $\beta$ given by \eqref{equ:beta,alpha}. For any fixed $l$ large enough, there exist $\al_{0}>0$, $\ve_{0}>0$, $\la_{0}>0$ and $C>0$, such that if $F_{0}(x,v)\geq 0$ and
\begin{align*}
\sum_{k=0,1}\n{w_{l}\pa_{x}^{k}\lrc{F_{0}(x,v)-G(v)}}_{L_{x,v}^{\infty}}\leq \ve_{0},
\end{align*}
and
\begin{align}\label{equ:mass,moment,zero,initial}
\int_{\T^{3}}\int_{\R^{3}}\lrc{F_{0}(x,v)-G(v)}dvdx=0,\quad \int_{\T^{3}}\int_{\R^{3}} v\lrc{F_{0}(x,v)-G(v)}dvdx=0,
\end{align}
then for $0<\al\leq \al_{0}$, the USF Boltzmann equation \eqref{equ:inhomogeneous,Boltzmann,shear} admits a unique solution $F(t,x,v)\geq 0$ satisfying the global stability estimate
\begin{align}\label{equ:global stability,estimate,thm,P0}
\bbn{w_{l}\lrc{e^{3\beta t}F(t,x,e^{\beta t}v)-G(v)}}_{L_{x,v}^{\infty}}\leq C\sum_{k=0,1}
\n{w_{l}\pa_{x}^{k}\lrc{F_{0}(x,v)-G(v)}}_{L_{x,v}^{\infty}}.
\end{align}
For the non-zero frequency mode, we have the exponential decay estimate
\begin{align}\label{equ:global stability,estimate,thm,P1}
\bn{w_{l}e^{3\beta t}P_{\neq 0}^{x}F(t,x,e^{\beta t}v)}_{L_{x,v}^{\infty}}\leq Ce^{-\la_{0}t}\sum_{k=0,1}
\n{w_{l}\pa_{x}^{k}\lrc{F_{0}(x,v)-G(v)}}_{L_{x,v}^{\infty}}.
\end{align}

Furthermore,
the energy moment exhibits an exponential convergence toward a long-time limit state. More precisely, there exists a limit energy state $\mathbf{E}_{\al}(\infty)$ such that
\begin{align}\label{equ:exconvergence,long-time,state}
|\mathbf{E}_{\al}(t)-\mathbf{E}_{\al}(\infty)|\lesssim e^{-\la_{0}t},
\end{align}
where the renormalized energy $\mathbf{E}_{\al}(t)$ is defined by
\begin{align}\label{equ:def,energy moment}
\mathbf{E}_{\al}(t):=e^{-2\beta t}\int_{\T^{3}}\int_{\R^{3}}F(t,x,v)|v|^{2}dvdx-\int_{\R^{3}}G(v)|v|^{2}dv.
\end{align}
In particular, we obtain a sharp characterization of the long-time dynamics of the energy growth
\begin{align}\label{equ:long-time,state,energy,growth}
\bbabs{\int_{\T^{3}}\int_{\R^{3}}F(t,x,v)|v|^{2}dvdx-e^{2 \beta t}\lrs{\int_{\R^{3}}G(v)|v|^{2}dv+\mathbf{E}_{\al}(\infty)}}\lesssim e^{-\la_{0}t/2}.
\end{align}

\end{theorem}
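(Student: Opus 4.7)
The plan is to work in self-similar velocity variables: set $f(t,x,v):=e^{3\beta t}F(t,x,e^{\beta t}v)-G(v)$, so that the stationary equation \eqref{equ:steady profile,G} on $G$ cancels the $G$--source and $f$ satisfies a perturbation equation of the schematic form
\[
\pa_t f+e^{\beta t}v\cdot\nabla_x f-\beta\nabla_v\cdot(v f)-\al v_2\pa_{v_1} f = L_G f+Q(f,f),
\]
where $L_G \phi:=Q(G,\phi)+Q(\phi,G)$. The smallness hypothesis translates to $\n{w_l\pa_x^k f(0)}_{L^\infty_{x,v}}\lesssim\ve_0$, and \eqref{equ:mass,moment,zero,initial} combined with the rescaled conservation laws \eqref{equ:conservation law} enforces $\int_{\T^3\times\R^3}[1,v]f(t,x,v)\,dv\,dx\equiv 0$ for all $t\geq 0$.

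Next I would split $f=P_0^x f+P_{\neq 0}^x f$ and treat the two modes by different mechanisms. For the non-zero frequency part, the torus Poincar\'e inequality couples to the transport term to produce a spectral gap, and together with the coercivity of $L_G$ (which follows from the closeness $G\approx\mu$ in \eqref{equ:G,profile,1,order} when $\al$ is small) yields exponential decay in a weighted $L^2_{x,v}$ norm; since $\beta=O(\al^2)$, the slowly growing transport coefficient $e^{\beta t}$ is dominated by the dissipation rate. Passage to weighted $L^\infty_{x,v}$ is obtained by a standard $L^2\to L^\infty$ bootstrap using Duhamel's formula along the characteristics of $\pa_t+e^{\beta t}v\cdot\nabla_x-\al v_2\pa_{v_1}-\beta\nabla_v\cdot(v\cdot)$; the Maxwell molecule cutoff keeps the collision frequency bounded so that the weight $w_l$ is preserved. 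This delivers \eqref{equ:global stability,estimate,thm,P1}.

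The core of the proof is the zero-frequency mode $P_0^x f(t,v)$, which satisfies a spatially homogeneous equation with forcing $P_0^x Q(P_{\neq 0}^x f,P_{\neq 0}^x f)$ that is exponentially small by the previous step. The obstruction is the self-similar drift $-\beta\nabla_v\cdot(vf)-\al v_2\pa_{v_1}f$, which a priori forces unbounded moment growth. The resolution rests on the two ingredients flagged in the abstract. First, a null--condition type cancellation: $P_0^x Q(P_0^x f,P_0^x f)$ is automatically orthogonal to $1$ and $v$ in $v$, so the only nonlinear feedback into the second-order moments is a specific quadratic form whose structure can be controlled. Second, a spectral analysis of the $6\times 6$ matrix governing the coupled linear evolution of the second-order moments $M_{ij}(t):=\int_{\R^3} v_iv_j P_0^x f(t,v)\,dv$: under the shear perturbation one shows this matrix has exactly one neutral direction (corresponding to $\mathbf{E}_\al(\infty)$) and five eigenvalues with strictly negative real part. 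Combined with hypocoercive estimates on the microscopic part of $P_0^x f$, this closes the weighted $L^\infty_v$ estimate and yields \eqref{equ:global stability,estimate,thm,P0}. Existence, uniqueness and positivity of $F$ then follow by a routine nonlinear iteration scheme anchored on these a priori bounds.

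For the asymptotic \eqref{equ:exconvergence,long-time,state}, I would read the ODE for $\mathbf{E}_\al(t)$ directly off \eqref{equ:conservation law} applied to the rescaled solution, obtaining
\[
\tfrac{d}{dt}\mathbf{E}_\al(t)=-2\al\,e^{-2\beta t}\int_{\T^3\times\R^3} v_1v_2\bigl(F-e^{2\beta t}G\bigr)\,dv\,dx.
\]
By the spectral decomposition above the integrand on the right decomposes into the projection onto the neutral direction -- which integrates to $\mathbf{E}_\al(\infty)$ -- plus a remainder that decays exponentially at rate $\la_0$. Integration in $t$ yields \eqref{equ:exconvergence,long-time,state}, and multiplying by $e^{2\beta t}$ with $\beta=O(\al^2)\ll\la_0$ produces the sharp asymptotic \eqref{equ:long-time,state,energy,growth} with rate $\la_0/2$. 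The single hardest step is the simultaneous use of the null--condition cancellation and the $6\times 6$ matrix spectral gap to close the $L^\infty_v$ estimate on $P_0^x f$ without deterioration under the unbounded self-similar drift; this is precisely where the Maxwell molecule scaling and the explicit form \eqref{equ:G,profile,1,order} of $G$ must be used in an essential way.
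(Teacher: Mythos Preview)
Your overall strategy---self-similar variables, low-high frequency split, null structure in the zero-frequency nonlinearity, spectral analysis of the second-moment matrix, and $L^2\to L^\infty$ bootstrap---matches the paper's. Two implementation points deserve correction, however.

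First, you gloss over how to get coercivity and weighted $L^\infty$ control when linearizing around $G$ rather than $\mu$. The paper does \emph{not} work directly with $L_G$; since $G$ has only polynomial tails, it instead writes $\tilde g=\mu^{-1/2}\tilde f$ and applies Caflisch's decomposition $\tilde g=\mu^{-1/2}g_1+g_2$, with $g_1$ carrying the polynomial-weighted part and $g_2$ seeing the standard linearized operator $L$ around $\mu$. The $L^\infty$--$L^2$ interplay (Guo's method) is then run on the coupled $(g_1,g_2)$ system. Your sentence ``coercivity of $L_G$ follows from $G\approx\mu$'' is the right heuristic, but the paper's concrete mechanism is this splitting, and without it the weighted estimates do not close.

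Second, the moment matrix the paper analyzes is $3\times 3$, not $6\times 6$: the shear structure makes $\{P_0^xE,\,P_0^xd_{12},\,P_0^xd_{22}\}$ a closed subsystem, and the matrix $A_\alpha$ has eigenvalues $0,\ 2b_0\pm i\sqrt{6}\alpha/6+O(\alpha^2)$. The zero eigenvalue is identified directly from the fact that $(\langle G,|v|^2\rangle,\langle G,v_1v_2\rangle,\langle G,v_2^2\rangle)$ lies in $\ker A_\alpha$ by the stationary equation for $G$. The null structure is also sharper than you state: using the explicit Maxwell-molecule identity $\langle Q(f,f),v_iv_j\rangle=-2b_0[\langle f,v_iv_j\rangle\langle f,1\rangle-\langle f,v_i\rangle\langle f,v_j\rangle]+\delta_{ij}(\cdots)$, the zero-frequency projection of the nonlinear forcing reduces to a bilinear expression in $P_{\neq 0}^x$ moments \emph{only}, because $P_0^x\langle\tilde f,1\rangle=P_0^x\langle\tilde f,v\rangle=0$. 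This, not orthogonality to $1,v$, is what makes the forcing $R(s)$ decay like $e^{-2\lambda_0 s}$ and allows the Duhamel integral for $U(t)$ to converge to $U(\infty)$, giving \eqref{equ:exconvergence,long-time,state} directly rather than through the scalar ODE you wrote.
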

We here present some remarks on Theorem \ref{thm:main theorem}.
\begin{itemize}
\item For $\al=0$, there have been a significant progress in the study of both the homogeneous and inhomogeneous Boltzmann equation. For $\al\neq 0$, to the best of our knowledge, Theorem \ref{thm:main theorem} appears to be the first result on the global stability problem to the inhomogeneous USF Boltzmann equation \eqref{equ:inhomogeneous,Boltzmann,shear}, which provides a sharp characterization of the long-time dynamics of energy growth.
\item The long-time limit state $\mathbf{E}_{\al}(\infty)$ is explicitly computable via formula \eqref{equ:long-time limit state} and effectively approximated via estimate \eqref{equ:exconvergence,energy moment}. Notably, for $\al=0$, we have $\mathbf{E}_{\al}(\infty)=\mathbf{E}_{\al}(0)$ with $G=\mu$, which aligns with the classical energy conservation law for the Boltzmann equation in the absence of shear effects. Further, this result also recovers the stability of the global Maxwellian established in \cite[Proposition 4.3]{AEP87} by Arkeryd, Esposito and Pulvirenti.

\item The homogeneous case is a special case of Theorem \ref{thm:main theorem}. Via formula \eqref{equ:long-time limit state}, one can achieve that the energy $\mathbf{E}_{\al}(\infty)=0$ by imposing a suitable initial condition on the second-order moment. In contrast, the inhomogeneous setting involves a nontrivial interaction between spatial inhomogeneity and collision processes.
     Consequently, the long-time limit state $\mathbf{E}_{\alpha}(\infty)$ does not vanish but is instead intrinsically determined by the global solution itself.
     This intrinsic dependence introduces a substantial difficulty in the analysis of long-time behavior.

\end{itemize}

\subsection{Literatures}
The class of homoenergetic solutions to the Boltzmann equation was introduced independently by Truesdell \cite{Tru56} and Galkin \cite{Gal58} as exact, spatially homogeneous solutions associated with a prescribed linear velocity field. Within this class, the uniform shear flow has been extensively studied, serving both as a mathematically tractable model and as a canonical nonequilibrium state in kinetic theory. See for example the early work \cite{BCS96,Cer89,Cer01,Cer02}.

 Numerically, Montanero-Santos-Garz\'{o} \cite{MSG96} performed direct Monte Carlo simulations of the USF Boltzmann equation, demonstrating strongly anisotropic velocity distributions and non-Newtonian rheological properties. Later, Acedo-Santos-Bobylev \cite{ASB02} provided an analytic description of the emergence of algebraic high-energy tails in the velocity distribution under shear, within the Maxwell molecule framework. These works revealed the rich non-equilibrium structure of USF solutions beyond the near-equilibrium regime.

Significant progress has been made in recent years toward a rigorous theory of homoenergetic solutions. James-Nota-Vel\'{a}zquez  have carried out a comprehensive program of analysis in a series of works. In \cite{JNV19}, they established the existence of self-similar profiles for homoenergetic solutions and analyzed their entropy properties. In subsequent papers, they investigated the long-time asymptotics of these solutions: in the collision-dominated regime \cite{JNV19long} and in the hyperbolic-dominated regime \cite{JNV20}, providing a detailed classification of the asymptotic behaviors depending on the underlying balance between transport and collision effects. Complementing these studies, Bobylev-Nota-Vel\'{a}zquez \cite{BNV20} investigated self-similar asymptotics for a modified Maxwell-Boltzmann equation in systems subject to deformations, offering insight into deformation-driven kinetic dynamics and the approach to self-similarity.

Other works have focused specifically on the USF Boltzmann equation. For Maxwell molecules, Duan-Liu \cite{DL21} rigorously proved the existence, uniqueness, non-negativity, and stability of self-similar profiles under small shear rates. Recent studies have extended these results to more general interaction potentials. Kepka \cite{Kep23} investigated self-similar profiles for non-cutoff Maxwell molecules and examined the long-time behavior in the collision-dominated regime for hard potentials. Independently, Duan-Liu \cite{DL25} studied uniform shear flow for hard potentials using energy and moment methods, without relying on Fourier-transform techniques. Furthermore, the 3D kinetic Couette flow via the Boltzmann equation in the diffusive limit was rigorously analyzed in \cite{DLSY24}, providing a link between kinetic theory and hydrodynamic descriptions.

The stability of shear flows has been extensively investigated for fluid dynamical equations such as the Euler and Navier-Stokes systems; see, for instance, \cite{BGM17,BGM19,BM15,BMV16,IJ20,IJ23,IJ23icm,MZ24,SH01,WZ21,WZ23,WZZ18}. It is therefore of importance to develop analogous mathematical results at the kinetic level. Furthermore, understanding the relationship between these fluid solutions and Boltzmann solutions through a rigorous justification of the hydrodynamic limit under small shear strength presents a key challenge. As a first step toward this long-term goal, it is crucial to investigate the global stability of the sheared Boltzmann equation, particularly in the inhomogeneous setting. In this direction,
 these well-posedness results for the stationary self-similar profile $G(v)$ emerging in uniform shear flow, as established in \cite{BNV20,DL21,JNV19}, provides a necessary foundation for further analysis of long-time behavior.

\subsection{Ideas and Strategies}

Compared to the classical spatially inhomogeneous Boltzmann equation, the USF Boltzmann equation exhibits a distinctive feature of energy growth.
Understanding the long-time behavior of this energy growth is not only a central goal but also a key step toward addressing the global stability of solutions around the self-similar profile $G(v)$.

In the steady case, as discussed in \cite{DL21}, a major difficulty arises from a velocity growth term induced by the shearing motion. For the time-dependent problem in a spatially inhomogeneous setting, further challenges emerge, including unbounded energy growth and the complex interaction between spatial dependence and the nonlinear collision term. Moreover, even in a renormalized formulation, the energy conservation law, which typically serves as a key point for global stability analysis, fails to hold. In the homogeneous setting, one could overcome this difficulty by constructing a normal solution that conserves all physical quantities, including energy, through a scaling that depends on the solution itself, c.f. \cite{DL25,Kep23,Kep24}. However, such a conservative framework becomes substantially more difficult to implement in the presence of spatial inhomogeneity.

To address these difficulties, we propose a novel approach developed within a non-conservative framework.
The key technical contribution lies in deriving a global estimate for the energy moment within this non-conservative setting. This global estimate, combined with Caflisch's decomposition \cite{Caf80} and Guo's $L^{\infty}$--$L^{2}$ energy method \cite{Guo10}, yields a systematic argument for characterizing the dynamics of energy growth.

Our analysis relies on a structural property inherent to Maxwell molecules. Specifically, we combine a low-high frequency decomposition, a spectral analysis of the matrix associated with the second-order moment equation, and a crucial cancellation property in the zero-frequency mode of the nonlinear collision term.

More precisely, since the energy is not conserved in this setting, we first decompose the energy component $c$ into its zero-frequency and non-zero frequency modes:
\begin{align*}
\sup_{s\in[0,t]}\n{c(s)}_{L_{x}^{2}}\leq \sup_{s\in[0,t]}\n{P_{0}^{x}c(s)}_{L_{x}^{2}}+
\sup_{s\in[0,t]}\n{P_{\neq 0}^{x}c(s)}_{L_{x}^{2}}.
\end{align*}
We then derive the evolution equation for the second-order moment, restricted to the zero-frequency mode:
\begin{align}\label{sec-mo-in}
\frac{d}{dt}U+\lrc{2\be I+
\begin{pmatrix}
0&2\al &0\\
0&2b_{0} & \al \\
-\frac{2b_{0}}{3}&0&2b_{0}
\end{pmatrix}}
U=
\begin{pmatrix}
0\\
P_{0}^{x}\lra{Q(\wt{f},\wt{f}),v_{1}v_{2}}\\
P_{0}^{x}\lra{Q(\wt{f},\wt{f}),v_{2}v_{2}}
\end{pmatrix}.
\end{align}
Here,
$
U=[
P_{0}^{x}E,
P_{0}^{x}d_{12},
P_{0}^{x}d_{22}]^T$, in which $[E,d_{12},d_{22}]$ represent the second-order energy moments as defined in \eqref{equ:hydrodynamic moments}. Owing to the absence of any decay in the energy moment equation \eqref{sec-mo-in}, the nonlinear term in \eqref{sec-mo-in} becomes dangerous and could potentially lead to finite-time blow-up.
Drawing upon the analysis of the collision operator's action on quadratic velocity moments by Truesdell and Muncaster \cite[Chapter XII]{TM80},
a null structure is revealed for the zero-frequency mode under consideration:
\begin{align*}
P_{0}^{x}\lra{Q(\wt{f},\wt{f}),v_{1}v_{2}}
=&-2b_{0}P_{0}^{x}\lrc{\lrs{P_{\neq 0}^{x}\lra{\wt{f},v_{1}v_{2}}}
\lrs{P_{\neq 0}^{x}\lra{\wt{f},1}}-\lrs{P_{\neq 0}^{x}\lra{\wt{f},v_{1}}}
\lrs{P_{\neq 0}^{x}\lra{\wt{f},v_{2}}}},
\end{align*}
and
\begin{align*}
&P_{0}^{x}\lra{Q(\wt{f},\wt{f}),v_{2}v_{2}}\\
 =&-2b_{0}P_{0}^{x}\lrc{\lrs{ P_{\neq 0}^{x}\bblra{\wt{f},v_{2}v_{2}-\frac{|v|^{2}}{3}}}
 \lrs{P_{\neq 0}^{x}\lra{\wt{f},1}}+\lrs{P_{\neq 0}^{x}\lra{\wt{f},v_{2}}}^{2}-\frac{1}{3}\sum_{i=1}^{3}\lrs{P_{\neq 0}^{x}\lra{\wt{f},v_{i}}}^{2}}.
\end{align*}
This together with the spectrum analysis of the linear equation of \eqref{sec-mo-in} further gives a global estimate
\begin{align*}
\sup_{s\in[0,t]}\n{U(s)}_{L_{x}^{2}}\leq \n{U(0)}_{L_{x}^{2}}+\lrs{\sup_{s\in[0,t]}e^{\la_{0} s}\n{w_{l}\pa_{x}g_{1}(s)}_{L_{x,v}^{\infty}}}^{2}+\lrs{\sup_{s\in[0,t]}e^{\la_{0} s}\n{\pa_{x}g_{2}(s)}_{L_{x,v}^{2}}}^{2}.
\end{align*}
The cancellation property we uncover is closely analogous to the null condition in nonlinear wave equations, first identified by Klainerman \cite{Kla84,Kla86} and Christodoulou \cite{Chr86}, which suppresses the most dangerous resonant interactions and ensures global existence for small initial data. In our kinetic setting, the zero-frequency cancellation similarly prevents resonant energy growth and allows global estimates.
We believe this mechanism is particularly significant for shear flow in the Boltzmann equation, where the long-time behavior of energy moments is central to understanding the interplay between kinetic effects and macroscopic shear.
Our result uncovers an intrinsic stability feature of shear-driven kinetic systems, suggesting that such null structures may persist under perturbations such as non-cutoff kernels and more general intermolecular potentials. We expect this structural insight to provide a natural foundation for future studies on transition thresholds and stability problems for shear flows in the Boltzmann framework.

The paper is structured as follows. In Section \ref{sec:Local Well-posedness}, we introduce a renormalized perturbation equation around the self-similar profile and establish local well-posedness, which serves as the foundation for the subsequent global stability analysis. Section \ref{sec:Weighted Estimates} is devoted to weighted $L^{\infty}$ estimates, reducing the stability problem to an $L^{2}$ energy estimate. In Section \ref{sec:L2 Energy Estimates}, we carry out a detailed $L^{2}$ energy analysis, subdivided into lower-order energy estimates in Subsection \ref{sec:Lower Energy Estimates}, second-order energy moment estimates in Subsection \ref{sec:Moment Estimates}, and higher-order energy estimates in Subsection \ref{sec:Higher Energy Estimates}. Finally, in Section \ref{sec:Global Stability}, we conclude the proof of Theorem \ref{thm:main theorem}.

\section{Local Well-posedness}\label{sec:Local Well-posedness}
This section is devoted to establishing the local well-posedness for the Cauchy problem \eqref{equ:inhomogeneous,Boltzmann,shear}, which is a necessary first step towards the subsequent global stability analysis. By the self-similar scaling structure in \eqref{equ:self-similar solution}, we introduce the self-similar scaling transformation
\begin{align*}
f(t,x,v)=e^{3\beta t}F(t,x,e^{\beta t}v).
\end{align*}
Under this scaling, the Cauchy problem \eqref{equ:inhomogeneous,Boltzmann,shear} reduces to
\begin{equation}\label{equ:inhomogeneous,Boltzmann,shear,renormalized}
\left\{
\begin{aligned}
&\pa_{t}f+e^{\beta t}\sum_{i=1}^{3}v_{i}\pa_{x_{i}}f-\beta \nabla_{v}(vf)-\al v_{2}\pa_{v_{1}}f
=Q(f,f),\\
&f(0,x,v)=F_{0}(x,v).
\end{aligned}
\right.\end{equation}
As indicated in \eqref{equ:self-similar solution}, the solution $F(t, x, v)$ exhibits the asymptotic behavior
\[ F(t, x, v) \sim e^{-3\beta t} G\big(e^{-\beta t}v\big) \quad \text{as} \quad t \to \infty,\]
where $G(v)$ is the self-similar profile. This asymptotic relation implies that the rescaled distribution function $f(t,x,v)$ exhibits the stability property around the self-similar profile $G(v)$ in the long-time limit.

To investigate the stability of this profile, we introduce the perturbation
\begin{equation*}
    \widetilde{f}(t, x, v) := f(t, x, v) - G(v).
\end{equation*}
Combining \eqref{equ:inhomogeneous,Boltzmann,shear,renormalized} and \eqref{equ:steady profile,G}, a straightforward calculation shows that $\tilde{f}$ satisfies the perturbation equation
\begin{equation}\label{equ:perturbation equation,wf}
\left\{
\begin{aligned}
&\pa_{t}\wt{f}+e^{\beta t}\sum_{i=1}^{3}v_{i}\pa_{x_{i}}\wt{f}-\beta \nabla_{v}(v\wt{f})-\al v_{2}\pa_{v_{1}}\wt{f}
=Q(\wt{f},\wt{f})+Q_{sym}(\wt{f},G),\\
&\wt{f}(0,x,v)=F_{0}(x,v)-G(v),
\end{aligned}
\right.
\end{equation}
where $Q_{sym}(f_{1},f_{2}):=Q(f_{1},f_{2})+Q(f_{2},f_{1})$ is the symmetrized collision operator.

Next, we rescale the perturbation by the global Maxwellian $\mu(v)$ and set $\wt{g}=\mu^{-\frac{1}{2}}\wt{f}$.
Recalling the asymptotic expansion \eqref{equ:G,profile,1,order} that $G=\mu+\al \mu^{\frac{1}{2}}G_{1}$ with $\mu^{\frac{1}{2}}G_{1}\sim \frac{2}{b_{0}}v_{1}v_{2}\mu$, and
substituting this into \eqref{equ:perturbation equation,wf}, we obtain
\begin{equation}\label{equ:perturbation equation,wg}
\left\{
\begin{aligned}
&\pa_t \wt{g}+e^{\beta t}\sum_{i=1}^{3}v_{i}\pa_{x_{i}}\wt{g}-\beta \nabla_{v} \cdot(v \wt{g})+\frac{\beta}{2}|v|^2 \wt{g}-\alpha v_{2} \pa_{v_{1}} \wt{g}
+\frac{\alpha}{2} v_{1} v_{2} \wt{g}+L \wt{g}\\
=&\Gamma(\wt{g}, \wt{g})+\al \Gamma_{sym}\lrs{G_{1}, \wt{g}}, \\
&\wt{g}(0, x, v)=\frac{F_0(x, v)-G(v)}{\sqrt{\mu}}\stackrel{\text { def }}{=} \wt{g}_{0}(x, v),
\end{aligned}
\right.
\end{equation}
where the linear and nonlinear operators are defined by
\begin{align}
L \wt{g}=:&-\mu^{-1 / 2}Q_{sym}(\mu, \sqrt{\mu} \wt{g}),\label{equ:notation,Lg}\\
\Gamma(f_{1}, f_{2})=:&\mu^{-1 / 2} Q(\sqrt{\mu} f_{1}, \sqrt{\mu} f_{2}),\quad
\Gamma_{sym}(f_{1}, f_{2})=:\mu^{-1 / 2} Q_{sym}(\sqrt{\mu} f_{1}, \sqrt{\mu} f_{2}).\label{equ:notation,Gamma}
\end{align}

To analyze \eqref{equ:perturbation equation,wg}, we employ Caflisch's decomposition
\begin{align*}
 \wt{g}=\mu^{-\frac{1}{2}}g_{1}+ g_{2},
\end{align*}
where $g_{1}$ and $g_{2}$ satisfy the coupled system
\begin{equation}\label{equ:g1,equation}
\left\{
\begin{aligned}
&\pa_t g_{1}+e^{\beta t} \sum_{i=1}^{3}v_{i}\pa_{x_{i}} g_{1}-\beta \nabla_{v} \cdot\left(v g_{1}\right)-\alpha v_{2} \pa_{v_{1}} g_{1}+v_0 g_{1} \\
=&-\frac{\beta}{2}|v|^2 \mu^{\frac{1}{2}} g_{2}-\frac{\alpha}{2} \mu^{\frac{1}{2}} v_{1} v_{2} g_{2}+(1-\chi_M) K_{\mu}g_{1} +H\left(g_{1}, g_{2}\right),\\
&g_{1}(0, x, v)=\wt{f}_{0}(x, v)=F_0(x, v)-G(v),
\end{aligned}
\right.
\end{equation}
and
\begin{equation}\label{equ:g2,equation}
\left\{
\begin{aligned}
&\pa_t g_{2}+e^{\beta t} \sum_{i=1}^{3}v_{i}\pa_{x_{i}} g_{2}-\beta \nabla_{v} \cdot\left(v g_{2}\right)-\alpha v_{2} \pa_{v_{1}} g_{2}+L g_{2} =\mu^{-1 / 2}\chi_{M} K_{\mu} g_{1},  \\
&g_{2}(0, x, v) =0,
\end{aligned}\right.
\end{equation}
with the cutoff function $\chi_{M}(v)=1_{\lr{|v|\leq M}}$.
Here, the operators $L$, $K$, $K_{\mu}$, and $H$ are given by
\begin{align}
&L f=\nu_{0} f-K f,\ \nu_{0}=\int_{\mathbb{R}^3} \int_{\mathbb{S}^2} B_0(\cos \theta) \mu\left(v_*\right) d \omega dv_*,\label{equ:notation,L,K}\\
 & Kf=\mu^{-\frac{1}{2}}\lrc{Q(\mu^{\frac{1}{2}}f,\mu)+Q^{+}(\mu, \mu^{\frac{1}{2}} f)},
\label{equ:notation,K}\\
& K_{\mu}(f)=\mu^{\frac{1}{2}}K(\mu^{-\frac{1}{2}}f)=Q(f,\mu)+Q^{+}(\mu,f),
\label{equ:notation,Kmu}\\
&H\left(g_{1}, g_{2}\right)= Q(g_{1}+\mu^{\frac{1}{2}} g_{2}, g_{1}+\mu^{\frac{1}{2}} g_{2})+\al Q_{sym}(g_{1}+\mu^{\frac{1}{2}} g_{2}, \mu^{\frac{1}{2}}G_{1}) .\label{equ:notation,H}
\end{align}

We now proceed to establish the local well-posedness theory for the coupled system \eqref{equ:g1,equation}--\eqref{equ:g2,equation}.

\begin{lemma}[Local well-posedness]\label{lemma:lwp,g12}
 For initial data $[g_{1}(0),g_{2}(0)]$ satisfying
\begin{align*}
\n{\left[g_{1}(0), g_{2}(0)\right]}_{Y}:=\sum_{k=0,1} \left\{\n{w_{l} \pa_{x}^{k} g_{1}(0)}_{L_{x,v}^{\infty}}+\n{w_{l} \pa_{x}^{k} g_{2}(0)}_{L_{x,v}^{\infty}}\right\}
<\infty,
\end{align*}
there exists a positive time
$$T\sim \n{\left[g_{1}(0), g_{2}(0)\right]}_{Y}^{-1},$$
 such that the coupled system \eqref{equ:g1,equation}--\eqref{equ:g2,equation} admits a unique local-in-time solution $\left[g_{1}, g_{2}\right]$ satisfying
\begin{align*}
\left\|\left[g_{1}, g_{2}\right]\right\|_{\ol{Y}_{T}}\leq 2C\n{[g_{1}(0),g_{2}(0)]}_{Y},
\end{align*}
where the norm is defined by
\begin{equation*}
\n{\left[\mathcal{G}_1, \mathcal{G}_2\right]}_{\overline{Y}_{T}}=:\sum_{k=0,1} \lrc{\sup_{t\in[0,T]}\left\|w_{l} \pa_{x}^{k} \mathcal{G}_1(t)\right\|_{L_{x,v}^{\infty}}+\sup_{t\in[0,T]}\left\|w_{l} \pa_{x}^{k} \mathcal{G}_2(t)\right\|_{L_{x,v}^{\infty}}} .
\end{equation*}
\end{lemma}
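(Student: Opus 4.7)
The plan is to construct the solution by a standard Picard iteration in mild form, leveraging the fact that the two equations \eqref{equ:g1,equation}--\eqref{equ:g2,equation} are linear transport equations (with damping coming from $\nu_0$ and from $L=\nu_0 - K$, respectively) once we freeze the source terms at the previous iterate. Define $(g_1^0,g_2^0)=(g_1(0),0)$ and iterate by solving
\begin{equation*}
\pa_t g_1^{n+1}+e^{\beta t}v\cdot\nabla_x g_1^{n+1}-\beta\nabla_v\cdot(vg_1^{n+1})-\al v_2\pa_{v_1}g_1^{n+1}+\nu_0 g_1^{n+1}=S_1(g_1^n,g_2^n),
\end{equation*}
\begin{equation*}
\pa_t g_2^{n+1}+e^{\beta t}v\cdot\nabla_x g_2^{n+1}-\beta\nabla_v\cdot(vg_2^{n+1})-\al v_2\pa_{v_1}g_2^{n+1}+\nu_0 g_2^{n+1}=Kg_2^{n+1}+\mu^{-1/2}\chi_M K_\mu g_1^n,
\end{equation*}
where $S_1$ collects all the right-hand side of \eqref{equ:g1,equation}. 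The characteristics of $\pa_t+e^{\beta t}v\cdot\nabla_x-\beta\nabla_v\cdot(v\,\cdot)-\al v_2\pa_{v_1}$ can be integrated explicitly; writing the Duhamel formula along them and using that the Jacobian is controlled and the shear-induced velocity drift is linear in $t$, I would get representation formulas for $g_i^{n+1}$ from which the weight $w_l$ along characteristics is stable on any fixed finite interval $[0,T]$.

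Equipped with the mild representation, I would close an a priori estimate in $\overline{Y}_T$. The dissipation $e^{-\nu_0 t}$ in the Duhamel integrals absorbs the factor $T$ for small $T$, so each right-hand side contributes a factor $CT$ (or $C$ times a constant independent of $T$ for the linear-in-data pieces). The terms $\tfrac{\be}{2}|v|^2\mu^{1/2}g_2$ and $\tfrac{\al}{2}\mu^{1/2}v_1v_2g_2$ are harmless since $|v|^k\mu^{1/2}$ is bounded and they give a contribution $\lesssim T\n{w_l g_2^n}_{L^\infty_{x,v}}$; the coupling $\mu^{-1/2}\chi_M K_\mu g_1^n$ is pointwise $\lesssim C_M\n{w_l g_1^n}_{L^\infty_{x,v}}$; and the cut-off piece $(1-\chi_M)K_\mu g_1^n$ uses the standard smallness of $K$ outside a large velocity ball, chosen once $M$ is fixed. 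For the nonlinear term $H(g_1^n,g_2^n)$ I would use the elementary weighted bound
\begin{equation*}
\n{w_l Q_{sym}(f_1,f_2)}_{L^\infty_v}\lesssim \n{w_l f_1}_{L^\infty_v}\n{w_l f_2}_{L^\infty_v},
\end{equation*}
valid for the cutoff Maxwell kernel under the hypothesis that $l$ is large enough, together with the pointwise decay \eqref{equ:G,profile,velocity} of $\mu^{1/2}G_1$ to handle the $\al Q_{sym}(\cdot,\mu^{1/2}G_1)$ piece. The $\pa_x$ estimate is identical because the operators in \eqref{equ:g1,equation}--\eqref{equ:g2,equation} have $x$-independent coefficients, so $\pa_x g_i^{n+1}$ satisfies the same system with forcing $\pa_x S_1(g_1^n,g_2^n)$ which by Leibniz is bilinear in $(\pa_x g_1^n,\pa_x g_2^n)$ and $(g_1^n,g_2^n)$. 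Combining these, I would obtain
\begin{equation*}
\n{[g_1^{n+1},g_2^{n+1}]}_{\overline{Y}_T}\leq C\n{[g_1(0),g_2(0)]}_Y+CT\lrs{\n{[g_1^n,g_2^n]}_{\overline{Y}_T}+\n{[g_1^n,g_2^n]}_{\overline{Y}_T}^2},
\end{equation*}
which closes a uniform-in-$n$ bound of size $2C\n{[g_1(0),g_2(0)]}_Y$ for $T\lesssim \n{[g_1(0),g_2(0)]}_Y^{-1}$.

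The contraction argument for $[g_1^{n+1}-g_1^n,g_2^{n+1}-g_2^n]$ is then structurally identical, since the difference solves a linear system whose source is linear in the differences, with coefficients bounded in terms of the uniform bound just obtained; choosing $T$ slightly smaller if necessary yields a contraction factor less than one, hence convergence in $\overline{Y}_T$ to a unique fixed point, which is the desired local solution. Uniqueness in the class $\overline{Y}_T$ follows from the same estimate applied to the difference of two solutions. The main technical obstacle I anticipate is \emph{not} the contraction itself but rather ensuring that the nonlinear gain and loss terms and the non-compact piece $(1-\chi_M)K_\mu g_1$ are genuinely controlled in the weighted $L^\infty_{x,v}$ norm with a prefactor that vanishes either with $T$ or with $1/M$; once $M=M(\al_0,l)$ is chosen so that $\n{(1-\chi_M)K_\mu}_{L^\infty_l\to L^\infty_l}$ is small and $l$ is large enough for the $Q$-estimate above to hold, all remaining smallness is purchased from the factor $T$, yielding the stated lifespan $T\sim \n{[g_1(0),g_2(0)]}_Y^{-1}$.
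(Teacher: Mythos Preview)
Your proposal is correct and follows essentially the same route as the paper: Duhamel representation along the explicit characteristics of the shear-transport operator, weighted $L^\infty_{x,v}$ bounds on each source term (using $\|w_lQ^\pm(f,g)\|_{L^\infty_v}\lesssim\|w_lf\|_{L^\infty_v}\|w_lg\|_{L^\infty_v}$ for the nonlinearity, the velocity cutoff for $K_\mu$, and boundedness of $w_lK w_{-l}$), and a contraction for $T\sim\|[g_1(0),g_2(0)]\|_Y^{-1}$. The paper phrases it as a single contraction map $\mathcal{N}$ rather than a Picard sequence and first passes to $h_i=w_l\pa_x^k g_i$ so that the weight commutator $2l\beta\frac{|v|^2}{1+|v|^2}+2l\alpha\frac{v_1v_2}{1+|v|^2}$ appears explicitly and is absorbed into the damping $\mathcal{A}\geq\nu_0/2$, but this is only a presentational difference from your ``weight stable along characteristics'' remark; one small correction is that the smallness of $(1-\chi_M)K_\mu$ in the weighted norm comes as $C/l$ with $M=M(l)$ (Lemma~\ref{lemma:large velocity,K}), not from $M$ large independently of $l$, though for local existence only boundedness is needed since the factor $T$ supplies the smallness.
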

\begin{proof}
The proof relies on the Duhamel principle and the contraction mapping method. Applying Duhamel's formula, we reformulate the coupled system \eqref{equ:g1,equation}--\eqref{equ:g2,equation} in integral form through a nonlinear operator $\mathcal{N}$:
\begin{align*}
[g_{1},g_{2}]=\mathcal{N}[g_{1},g_{2}].
\end{align*}
More precisely,
let $[h_{1},h_{2}]=[w_{l} \pa_{x}^{k} g_{1},w_{l} \pa_{x}^{k} g_{2}]$ for $k=0,1$. Then $[h_{1},h_{2}]$ satisfies
\begin{equation}\label{equ:h1,equation}
\left\{\begin{aligned}
&\partial_t h_1+e^{\beta t} \sum_{i=1}^{3}v_{i}\pa_{x_{i}} h_1-\beta \nabla_{v} \cdot\left(v h_1\right)-\alpha v_{2} \partial_{v_{1}} h_1+2 l \beta \frac{|v|^2}{1+|v|^2} h_1 +2 l \alpha \frac{v_{2} v_{1}}{1+|v|^2} h_1+\nu_{0} h_1 \\
 =&-\frac{\beta}{2}|v|^2 \mu^{\frac{1}{2}} h_2-\frac{\alpha}{2} \mu^{\frac{1}{2}} v_{1} v_{2} h_2+
  (1-\chi_M) w_{l} K_{\mu}\left(\frac{h_1}{w_{l}}\right)+w_{l} \pa_{x}^{k} H\left(g_{1}, g_{2}\right), \\
&h_1(0, x, v)=w_{l} \pa_{x}^{k} g_{1}(0,x, v),
\end{aligned}\right.
\end{equation}
\begin{equation}\label{equ:h2,equation}
\left\{
\begin{aligned}
&\partial_t h_2+e^{\beta t} \sum_{i=1}^{3}v_{i}\pa_{x_{i}} h_2-\beta \nabla_{v} \cdot\left(v h_2\right)-\alpha v_{2} \partial_{v_{1}} h_2+2 l \beta \frac{|v|^2}{1+|v|^2} h_2 +2 l \alpha \frac{v_{2} v_{1}}{1+|v|^2} h_2+\nu_{0} h_2\\
=&w_{l} \mu^{-1 / 2}\chi_{M} K_{\mu}\left(\frac{h_1}{w_{l}}\right)+w_{l} K\left(\frac{h_2}{w_{l}}\right), \\
&h_2(0, x, v)=w_{l} \pa_{x}^{k} g_{2}(0,x, v).
\end{aligned}\right.
\end{equation}

We proceed by defining the characteristic trajectory $[s, X(s ; t, x, v), V(s ; t, x, v)]$ for systems \eqref{equ:h1,equation} and \eqref{equ:h2,equation} passing through $(t,x,v)$, governed by the following ODE system
\begin{equation}\label{equ:characteristic ODE}
\left\{\begin{aligned}
&\frac{d X_{i}}{d s}=e^{\beta s} V_{i}(s ; t, x, v), \quad i=1,2,3,\\
&\frac{d V_1}{d s}=-\beta V_1(s ; t, x, v)-\alpha V_2(s ; t, x, v), \\
&\frac{d V_i}{d s}=-\beta V_i(s ; t, x, v), \quad i=2,3, \\
&X(t ; t, x, v)=x, \quad V(t ; t, x, v)=v.
\end{aligned}\right.
\end{equation}
The explicit solution of this system is given by
\begin{equation}\label{equ:characteristic ODE,solution}
\left\{\begin{aligned}
X_{1}(s ; t, x, v) & =e^{\beta t}\left(e^{-\beta t}x_{1}-(t-s) v_{1}-\frac{1}{2} \alpha(t-s)^2 v_{2}\right), \\
X_{i}(s ; t, x, v) & =e^{\beta t}\left(e^{-\beta t}x_{i}-(t-s) v_i\right),\quad i=2,3, \\
V_1(s ; t, x, v) & =e^{\beta(t-s)}\left(v_{1}+\alpha v_{2}(t-s)\right), \\
V_i(s ; t, x, v) & =e^{\beta(t-s)} v_i ,\quad i=2,3.
\end{aligned}\right.
\end{equation}
Utilizing Duhamel's formula, we express the solution as
\begin{align}\label{equ:duhamel,h1,h2}
[h_{1},h_{2}]=[\mathcal{N}_{1}(g_{1},g_{2}),\mathcal{N}_{2}(g_{1},g_{2})],
\end{align}
where
\begin{align*}
\mathcal{N}_{1}(g_{1},g_{2})=&I_{1,1}+I_{1,2}+I_{1,3}+I_{1,4}+I_{1,5},\\
\mathcal{N}_2\left(g_{1}, g_{2}\right)=&I_{2,1}+I_{2,2}+I_{2,3}.
\end{align*}
The integral components are defined as follows:
\begin{equation}\label{equ:items,I1,i}
\left\{
\begin{aligned}
I_{1,1}=&  e^{-\int_0^t \mathcal{A}(\tau, V(\tau)) d  \tau}\left(w_{l} \pa_{x}^{k}g_{1}\right)(0,X(0), V(0)),\\
I_{1,2}=&-\frac{\beta}{2} \int_0^t e^{-\int_s^t \mathcal{A}(\tau, V(\tau)) d  \tau}|V(s)|^2 \sqrt{\mu}(V(s)) h_2(s,X(s),V(s)) d  s,\\
I_{1,3}=& -\alpha \int_0^t e^{-\int_s^t \mathcal{A}(\tau, V(\tau)) d  \tau} \frac{V_1(s) V_2(s)}{2} \sqrt{\mu}(V(s)) h_2(s,X(s),V(s)) d  s, \\
I_{1,4}=&\int_0^t e^{-\int_s^t \mathcal{A}(\tau, V(\tau)) d  \tau}\lrc{(1-\chi_M) w_{l} K_{\mu}\left(\frac{h_1}{w_{l}}\right)}(s,X(s),V(s)) d  s ,\\
I_{1,5}=& \int_0^t e^{-\int_s^t \mathcal{A}(\tau, V(\tau)) d  \tau}\left(w_{l} \pa_{x}^{k} H(g_{1},g_{2})\right)(s,X(s),V(s)) d  s,
\end{aligned}
\right.
\end{equation}
and
\begin{equation}\label{equ:items,I2,i}
\left\{
\begin{aligned}
I_{2,1}= & e^{-\int_0^t \mathcal{A}(\tau, V(\tau)) d  \tau}\left(w_{l} \pa_{x}^{k}g_{2}\right)(0,X(0), V(0)), \\
I_{2,2}=& \int_0^t e^{-\int_s^t \mathcal{A}(\tau, V(\tau)) d  \tau}\lrc{\chi_{M} \mu^{-\frac{1}{2}} w_{l} K_{\mu}\left(\frac{h_1}{w_{l}}\right)}(s,X(s),V(s)) d  s,\\
I_{2,3}=&\int_0^t e^{-\int_s^t \mathcal{A}(\tau, V(\tau)) d  \tau}\left[w_{l} K\left(\frac{h_2}{w_{l}}\right)\right](s,X(s),V(s)) d  s,
\end{aligned}
\right.
\end{equation}
where the damping coefficient satisfies the lower bound:
\begin{equation}\label{equ:A(t,V),lower bound}
\mathcal{A}(\tau, V(\tau))=\nu_{0}-3 \beta+2 l \beta \frac{|V(\tau)|^2}{1+|V(\tau)|^2}+2 l \alpha \frac{V_2(\tau) V_1(\tau)}{1+|V(\tau)|^2} \geq \frac{\nu_{0}}{2}.
\end{equation}

We then define the set
\begin{align*}
B_{T}=\lr{\lrc{\mathcal{G}_1, \mathcal{G}_2}:\n{\left[\mathcal{G}_1, \mathcal{G}_2\right]}_{\overline{Y}_{T}}\leq 2C\n{[g_{1}(0),g_{2}(0)]}_{Y}
},
\end{align*}
and now demonstrate that the nonlinear map
$\mathcal{N}[\cdot,\cdot]$ is a contraction mapping on $B_{T}$ for sufficiently small $T>0$.

We begin by estimating the norm
\begin{align*}
\n{\mathcal{N}[g_{1},g_{2}]}_{\ol{Y}_{T}}=&
\sum_{k=0,1}\sup_{t\in[0,T]}\n{[h_{1},h_{2}](t)}_{L_{x,v}^{\infty}}\\
\leq&\sum_{k=0,1}\lrc{\sup_{t\in[0,T]}
\n{\mathcal{N}_{1}[g_{1},g_{2}](t)}_{L_{x,v}^{\infty}}+\sup_{t\in[0,T]}\n{\mathcal{N}_{2}[g_{1},g_{2}](t)}_{L_{x,v}^{\infty}}}\\
\leq& \sum_{k=0,1}\lrc{\sum_{i=1}^{5}\sup_{t\in[0,T]}\n{I_{1,i}(t)}_{L_{x,v}^{\infty}}+\sum_{i=1}^{3}
\sup_{t\in[0,T]}\n{I_{2,i}(t)}_{L_{x,v}^{\infty}}}.
\end{align*}

For $I_{1,1}$, $I_{1,2}$, and $I_{1,3}$, we have
\begin{align}\label{equ:lwp,I1,123}
\sum_{k=0,1}\sum_{i=1}^{3}\n{I_{1,i}(t)}_{L_{x,v}^{\infty}}\leq \sum_{k=0,1}\lrc{\n{w_{l}\pa_{x}^{k}g_{1}(0)}_{L_{x,v}^{\infty}}+C\al T\sup_{t\in [0,T]}\n{h_{2}(t)}_{L_{x,v}^{\infty}}}.
\end{align}

For $I_{1,4}$, we use Lemma \ref{lemma:large velocity,K} to obtain
\begin{align}\label{equ:lwp,I1,4}
\sum_{k=0,1}\n{I_{1,4}(t)}_{L_{x,v}^{\infty}}\leq &T\sum_{k=0,1}\sup_{t\in[0,T]}\bbn{(1-\chi_{M})w_{l}K_{\mu}\lrs{\frac{h_{1}}{w_{l}}}}_{L_{x,v}^{\infty}}\\
\leq& \frac{CT}{l}\sum_{k=0,1}\sup_{t\in[0,T]}\n{h_{1}(t)}_{L_{x,v}^{\infty}}.\notag
\end{align}

For $I_{1,5}$, we derive
\begin{align*}
\sum_{k=0,1}\n{I_{1,5}(t)}_{L_{x,v}^{\infty}}\leq &T\sum_{k=0,1}\sup_{t\in[0,T]}\n{w_{l} \pa_{x}^{k} H(g_{1},g_{2})(t)}_{L_{x,v}^{\infty}}.
\end{align*}
Recalling the definition of $H(g_{1},g_{2})$ in \eqref{equ:notation,H}, we use Lemma \ref{lemma:Q,bilinear estimate,Lp} to obtain
\begin{align*}
&\n{w_{l} \pa_{x}^{k}H(g_{1},g_{2})}_{L_{x,v}^{\infty}}\\
\leq& \n{w_{l}\pa_{x}^{k}Q(g_{1}+\mu^{\frac{1}{2}} g_{2}, g_{1}+\mu^{\frac{1}{2}} g_{2})}_{L_{x,v}^{\infty}}+\al
\n{w_{l}\pa_{x}^{k}Q_{sym}(g_{1}+\mu^{\frac{1}{2}} g_{2}, \mu^{\frac{1}{2}}G_{1})}_{L_{x,v}^{\infty}}\\
\lesssim& \sum_{k=0,1}\lrs{\n{w_{l}\pa_{x}^{k}g_{1}}_{L_{x,v}^{\infty}}+\n{w_{l}\pa_{x}^{k}g_{2}}_{L_{x,v}^{\infty}}}^{2}+\al\lrs{\n{w_{l}\pa_{x}^{k}g_{1}}_{L_{x,v}^{\infty}}
+\n{w_{l}\pa_{x}^{k}g_{2}}_{L_{x,v}^{\infty}}},
\end{align*}
where in the last inequality we have also used $\n{w_{l}\mu^{\frac{1}{2}}G_{1}}_{L_{v}^{\infty}}\lesssim 1$, as indicated in \eqref{equ:G,profile,velocity}.

Proceeding analogously for $I_{2,i}$, we deduce
\begin{align*}
&\sum_{k=0,1}\sum_{i=1}^{3}\n{I_{2,i}(t)}_{L_{x,v}^{\infty}}\\
\leq& \sum_{k=0,1}\lrc{\n{w_{l}\pa_{x}^{k}g_{2}(0)}_{L_{x,v}^{\infty}}+C T\sup_{t\in [0,T]}\n{h_{1}(t)}_{L_{x,v}^{\infty}}+C T\sup_{t\in [0,T]}\n{h_{2}(t)}_{L_{x,v}^{\infty}}}.
\end{align*}

Combining the estimates for $I_{1,i}$ and $I_{2,i}$, we arrive at
\begin{align*}
\n{\mathcal{N}[g_{1},g_{2}]}_{\ol{Y}_{T}}\leq C\lrc{\n{[g_{1}(0),g_{2}(0)]}_{Y}+\lrs{\al+\frac{1}{l}+1}T\n{[g_{1},g_{2}]}_{\ol{Y}_{T}}+
T\n{[g_{1},g_{2}]}_{\ol{Y}_{T}}^{2}}.
\end{align*}
By choosing $T \sim\frac{1}{8}\min\lr{1,(C\n{[g_{1}(0),g_{2}(0)]}_{Y})^{-1}}$, we conclude
\begin{align*}
\n{\mathcal{N}[g_{1},g_{2}]}_{\ol{Y}_{T}}\leq 2C\n{[g_{1}(0),g_{2}(0)]}_{Y}.
\end{align*}
Thus, $\mathcal{N}$ maps the set $B_{T}$ into itself.

Now, we consider two arbitrary elements $[g_{1}',g_{2}']\in B_{T}$ and $[g_{1}'',g_{2}'']\in B_{T}$. We estimate the difference between their images under $\mathcal{N}$ as follows
\begin{align*}
&\n{\mathcal{N}[g_{1}',g_{2}']-\mathcal{N}[g_{1}'',g_{2}'']}_{\ol{Y}_{T}}\\
\leq&\sum_{k=0,1}\sup_{t\in[0,T]}\n{h_{1}'(t)-h_{1}''(t)}_{L_{x,v}^{\infty}}+\sum_{k=0,1}\sup_{t\in[0,T]}\n{h_{2}'(t)-h_{2}''(t)}_{L_{x,v}^{\infty}}\\
\leq&C\lrs{\al+\frac{1}{l}+1}T\lrs{\n{g_{1}'-g_{1}''}_{\ol{Y}_{T}}+
\n{g_{2}'-g_{2}''}_{\ol{Y}_{T}}}\\
&+CT\lrs{\n{[g_{1}',g_{2}']}_{\ol{Y}_{T}}+\n{[g_{1}'',g_{2}'']}_{\ol{Y}_{T}}}\lrs{\n{g_{1}'-g_{1}''}_{\ol{Y}_{T}}+
\n{g_{2}'-g_{2}''}_{\ol{Y}_{T}}}\\
\leq& \frac{1}{2}\lrs{\n{g_{1}'-g_{1}''}_{\ol{Y}_{T}}+
\n{g_{2}'-g_{2}''}_{\ol{Y}_{T}}}.
\end{align*}
This shows that
 $\mathcal{N}$ is a contraction map on $B_{T}$ and admits a unique fixed point $[g_{1},g_{2}]\in B_{T}$, provided the time scale satisfies $T \sim\frac{1}{8}\min\lr{1,(C\n{[g_{1}(0),g_{2}(0)]}_{Y})^{-1}}$. The proof of Lemma \ref{lemma:lwp,g12} is then complete.
\end{proof}

\section{Weighted $L_{x,v}^{\infty}$ Estimates}\label{sec:Weighted Estimates}
In the section, we derive weighted $L_{x,v}^{\infty}$ estimates for solutions to the coupled system \eqref{equ:g1,equation}--\eqref{equ:g2,equation}.
For notational simplicity, we introduce the following weighted $L_{x,v}^{\infty}$ norms
\begin{align*}
\n{g_{i}}_{\ol{X}_{0,t}}:=&\sup_{s\in[0,t]}\n{w_{l}g_{i}(s)}_{L_{x,v}^{\infty}},\quad
\n{g_{i}}_{\ol{X}_{1,t}}:=\sup_{s\in[0,t]}e^{\la_{0} s}\n{w_{l}\pa_{x}g_{i}(s)}_{L_{x,v}^{\infty}},\\
\n{[g_{1},g_{2}]}_{\ol{X}_{k,t}}:=&\n{g_{1}}_{\ol{X}_{k,t}}+\n{g_{2}}_{\ol{X}_{k,t}},\quad k=0,1,
\end{align*}
where $\la_{0}>0$ represents the decay rate associated with the non-zero frequency modes.

The main result of this section provides a priori estimates for solutions to the coupled system \eqref{equ:g1,equation}--\eqref{equ:g2,equation}.
\begin{lemma}\label{lemma,Linf,estimate}
Let $[g_{1},g_{2}]$ be the solution to \eqref{equ:g1,equation}--\eqref{equ:g2,equation}. Then the following estimates hold
\begin{align}
\n{g_{1}}_{\ol{X}_{0,t}}\lesssim &\n{w_{l}\wt{f}_{0}}_{L_{x,v}^{\infty}}+\al \n{g_{2}}_{\ol{X}_{0,t}}+\lrs{\n{g_{1}}_{\ol{X}_{0,t}}+\n{g_{2}}_{\ol{X}_{0,t}}}
\lrs{\n{g_{1}}_{\ol{X}_{0,t}}+\n{g_{2}}_{\ol{X}_{0,t}}},\label{equ:Linf,g1,g2,0}\\
\n{g_{1}}_{\ol{X}_{1,t}}\lesssim &\n{w_{l}\pa_{x}\wt{f}_{0}}_{L_{x,v}^{\infty}}+ \al \n{g_{2}}_{\ol{X}_{1,t}}+\lrs{\n{g_{1}}_{\ol{X}_{0,t}}+\n{g_{2}}_{\ol{X}_{0,t}}}
\lrs{\n{g_{1}}_{\ol{X}_{1,t}}+\n{g_{2}}_{\ol{X}_{1,t}}},\label{equ:Linf,g1,g2,1}\\
\n{g_{2}}_{\ol{X}_{0,t}}\lesssim& \n{g_{1}}_{\ol{X}_{0,t}}+ \sup_{s\in[0,t]} \n{g_{2}(s)}_{L_{x,v}^{2}},\label{equ:Linf,g2,L2,0}\\
\n{g_{2}}_{\ol{X}_{1,t}}\lesssim& \n{g_{1}}_{\ol{X}_{1,t}}+ \sup_{s\in[0,t]} e^{\la_{0} s}\n{\pa_{x}g_{2}(s)}_{L_{x,v}^{2}}.\label{equ:Linf,g2,L2,1}
\end{align}
In particular, we obtain
\begin{align}\label{equ:Linf,L2,estimate}
\sum_{k=0,1}\n{[g_{1},g_{2}]}_{\ol{X}_{k,t}}
\lesssim&\sum_{k=0,1}\n{w_{l} \pa_{x}^{k} \wt{f}_{0}}_{L_{x,v}^{\infty}}+
\sup_{s\in[0,t]}\n{g_{2}(s)}_{L_{x,v}^{2}}+\sup_{s\in[0,t]} e^{\la_{0} s}\n{\pa_{x}g_{2}(s)}_{L_{x,v}^{2}}\\
&+\lrs{\sum_{k=0,1}\n{[g_{1},g_{2}]}_{\ol{X}_{k,t}}}^{2}.\notag
\end{align}
\end{lemma}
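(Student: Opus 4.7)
The plan is to revisit the Duhamel representation \eqref{equ:duhamel,h1,h2}--\eqref{equ:items,I2,i} built in the proof of Lemma \ref{lemma:lwp,g12}, but now extract decay rather than mere boundedness from the uniform damping bound \eqref{equ:A(t,V),lower bound}. Since $\pa_x$ commutes with every operator in \eqref{equ:g1,equation}--\eqref{equ:g2,equation} (the transport coefficient $e^{\beta t}v_i$ and the velocity drifts are $x$-independent, and $L$, $K$, $K_\mu$ act only in $v$), the same integral formulas with $h_i = w_l \pa_x^k g_i$, $k=0,1$, remain valid. After multiplying by $e^{\la_0 t}$, the factor $e^{-\int_s^t \mathcal{A}(\tau,V(\tau))d\tau}$ still yields positive net decay provided $\la_0 < \nu_0/2$, which is why the four estimates can be handled by a common scheme.

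For the $g_1$ estimate \eqref{equ:Linf,g1,g2,0}, I would bound each $I_{1,i}$ from \eqref{equ:items,I1,i} in $L^\infty_{x,v}$. Terms $I_{1,2}$ and $I_{1,3}$ carry a Gaussian factor $\sqrt{\mu}(V(s))$ together with the small prefactors $\beta\lesssim\al$ or $\al$, which, after integrating the damped exponential in $s$, produce the $\al\,\n{g_2}_{\ol X_{0,t}}$ contribution. The term $I_{1,4}$ is controlled via Lemma \ref{lemma:large velocity,K} (large-velocity smallness of $K_\mu$) so that the prefactor $1/l$ arises, which will later be absorbed by taking $l$ large. The term $I_{1,5}$ uses the bilinear $L^\infty$ bound on $H(g_1,g_2)$ from Lemma \ref{lemma:Q,bilinear estimate,Lp} together with \eqref{equ:G,profile,velocity}, yielding the quadratic right-hand side. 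The derivative estimate \eqref{equ:Linf,g1,g2,1} follows identically after multiplying by $e^{\la_0 t}$: the linear part still has net damping, while the product rule on $\pa_x H(g_1,g_2)$ distributes one derivative at a time and thus generates the mixed factor $\n{[g_1,g_2]}_{\ol X_{0,t}}\n{[g_1,g_2]}_{\ol X_{1,t}}$.

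For \eqref{equ:Linf,g2,L2,0} and \eqref{equ:Linf,g2,L2,1}, the approach is Guo's $L^\infty$--$L^2$ bootstrap \cite{Guo10}. Using \eqref{equ:items,I2,i} with $g_2(0)=0$, the term $I_{2,1}$ vanishes and $I_{2,2}$ is directly dominated by $\n{g_1}_{\ol X_{0,t}}$ because the cutoff $\chi_M$ makes $\mu^{-1/2}\chi_M K_\mu(h_1/w_l)$ pointwise bounded. The delicate piece is $I_{2,3}$: I would iterate Duhamel once to move the $K$ operator onto a solution at a previous time, apply the kernel estimate for $K$ to swap the $L^\infty$ norm against an $L^2$ integral on a bounded velocity set, and then perform a change of variables along the characteristics \eqref{equ:characteristic ODE,solution} to produce $\sup_{s\in[0,t]}\n{g_2(s)}_{L^2_{x,v}}$. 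The $\pa_x$ version is identical after factoring $e^{\la_0 t}$ through; commutation of $\pa_x$ with the transport and collision operators ensures no new commutator terms appear.

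Summing the four inequalities and choosing $l$ sufficiently large and $\al_0$ sufficiently small allow the $\al\,\n{g_2}_{\ol X_{k,t}}$ and $(1/l)\n{g_1}_{\ol X_{k,t}}$ contributions to be absorbed on the left, yielding the combined bound \eqref{equ:Linf,L2,estimate}. The main obstacle is the double-Duhamel step for $I_{2,3}$: the non-autonomous transport with the growing coefficient $e^{\beta t}$ and the shear drift complicates the usual Jacobian computation, so one must verify that the relevant change of variables from pre-image velocity coordinates to physical ones remains non-degenerate uniformly on each bounded time window. Since the characteristic solution \eqref{equ:characteristic ODE,solution} is explicit and both $\al,\beta$ are small, this non-degeneracy holds, after which the remaining steps follow the standard template and produce the stated estimates.
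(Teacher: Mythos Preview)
Your outline is correct and matches the paper's approach closely: Duhamel plus the damping bound \eqref{equ:A(t,V),lower bound} for the $g_1$ estimates, and Guo's double-Duhamel $L^\infty$--$L^2$ iteration for $g_2$, with the change of variables along the explicit characteristics \eqref{equ:characteristic ODE,solution}.

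One point in your last paragraph needs correction. The Jacobian in the change of variables $y=\eta-\mathcal{T}\xi$ satisfies $|\det\mathcal{T}|^{-1}=e^{-3\beta s}(s-\tilde s)^{-3}$, which is \emph{not} rendered uniformly bounded by smallness of $\al,\beta$; it blows up as $\tilde s\to s$ regardless. The paper (following the standard template you cite) fixes this by first excising a short time strip: the inner Duhamel integral over $[s-\ve,s]$ is bounded crudely by $C\ve\,\n{g_2}_{\ol X_{k,t}}$ and absorbed on the left, while on $\tilde s\le s-\ve$ the Jacobian is finite and the residual growth factor $\n{\mathcal{T}}^{3}|\det\mathcal{T}|^{-1}\lesssim 1+(s-\tilde s)^{3}$ is killed by the damping $e^{-\nu_0(s-\tilde s)/2}$. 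Together with the accompanying velocity-region decomposition of the kernel $k$ (large $|V|$, large $|\xi|$, near-diagonal $|V-\xi|\le 1/M$, and the compact main region), this is the only missing ingredient; once inserted, your argument goes through exactly as written.
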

\begin{proof}
Recalling the expressions \eqref{equ:duhamel,h1,h2}, \eqref{equ:items,I1,i}, and \eqref{equ:items,I2,i}, we have
\begin{align*}
e^{\la_{0}t}|w_{l} \pa_{x}^{k} g_{1}|\leq e^{\la_{0}t}\sum_{i=1}^{5}\abs{I_{1,i}},
\end{align*}
where
\begin{equation*}
\left\{
\begin{aligned}
I_{1,1}=&  e^{-\int_0^t \mathcal{A}(\tau, V(\tau)) d  \tau}\left(w_{l} \pa_{x}^{k}g_{1}\right)(0,X(0), V(0)),\\
I_{1,2}=&-\frac{\beta}{2} \int_0^t e^{-\int_s^t \mathcal{A}(\tau, V(\tau)) d  \tau}|V(s)|^2 \sqrt{\mu}(V(s)) h_2(s,X(s),V(s)) d  s,\\
I_{1,3}=& -\alpha \int_0^t e^{-\int_s^t \mathcal{A}(\tau, V(\tau)) d  \tau} \frac{V_1(s) V_2(s)}{2} \sqrt{\mu}(V(s)) h_2(s,X(s),V(s)) d  s, \\
I_{1,4}=&\int_0^t e^{-\int_s^t \mathcal{A}(\tau, V(\tau)) d  \tau}\lrc{(1-\chi_M) w_{l} K_{\mu}\left(\pa_{x}^{k}g_{1}\right)}(s,X(s),V(s)) d  s ,\\
I_{1,5}=& \int_0^t e^{-\int_s^t \mathcal{A}(\tau, V(\tau)) d  \tau}\left(w_{l} \pa_{x}^{k} H(g_{1},g_{2})\right)(s,X(s),V(s)) d  s,
\end{aligned}
\right.
\end{equation*}
The coercivity estimate \eqref{equ:A(t,V),lower bound} yields that
\begin{align*}
\mathcal{A}(\tau, V(\tau))\geq \frac{\nu_{0}}{2},\quad e^{-\int_{s}^{t} \mathcal{A}(\tau, V(\tau)) d  \tau}\leq e^{-\frac{\nu_{0}}{2}(t-s)}.
\end{align*}
Following the argument employed in \eqref{equ:lwp,I1,123}--\eqref{equ:lwp,I1,4}, for the first four terms, we derive
\begin{align*}
e^{\la_{0}t}\sum_{i=1}^{4}\abs{I_{1,i}}\leq&\n{w_{l}\pa_{x}^{k}\wt{f}_{0}}_{L_{x,v}^{\infty}}+C \al \lrs{\int_{0}^{t}e^{-\frac{\nu_{0}}{2}(t-s)}e^{\la_{0}(t-s)}ds} \lrs{\sup_{s\in[0,t]}e^{\la_{0}s}\n{w_{l} \pa_{x}^{k} g_{2}(s)}_{L_{x,v}^{\infty}}}\\
&+ \frac{C}{l} \lrs{\int_{0}^{t}e^{-\frac{\nu_{0}}{2}(t-s)}e^{\la_{0}(t-s)}ds} \lrs{\sup_{s\in[0,t]}e^{\la_{0}s}\n{w_{l} \pa_{x}^{k} g_{1}(s)}_{L_{x,v}^{\infty}}}\\
\leq&\n{w_{l}\pa_{x}^{k}\wt{f}_{0}}_{L_{x,v}^{\infty}}+\frac{C}{l}\n{g_{1}}_{\ol{X}_{k,t}}+C\al\n{g_{2}}_{\ol{X}_{k,t}}.
\end{align*}

For the nonlinear term $I_{1,5}$, we obtain
\begin{align*}
e^{\la_{0}t}|I_{1,5}|\leq&  \lrs{\int_{0}^{t}e^{-\frac{\nu_{0}}{2}(t-s)}e^{\la_{0}(t-s)}ds} \lrs{\sup_{s\in[0,t]}e^{\la_{0}s}\n{w_{l} \pa_{x}^{k}H(g_{1},g_{2})}_{L_{x,v}^{\infty}}}\\
\leq&C\lrs{\n{g_{1}}_{\ol{X}_{0,t}}+\n{g_{2}}_{\ol{X}_{0,t}}+\al}
\lrs{\n{g_{1}}_{\ol{X}_{k,t}}+\n{g_{2}}_{\ol{X}_{k,t}}},
\end{align*}
where in the last inequality we have used Lemma \ref{lemma:Q,bilinear estimate,Lp} to get
\begin{align*}
\n{w_{l} \pa_{x}^{k}H(g_{1},g_{2})}_{L_{x,v}^{\infty}}
\leq& \lrs{\n{w_{l}\pa_{x}^{k}g_{1}}_{L_{x,v}^{\infty}}+
 \n{w_{l}\pa_{x}^{k}g_{2}}_{L_{x,v}^{\infty}}}\lrs{\n{w_{l}g_{1}}_{L_{x,v}^{\infty}}+
 \n{w_{l}g_{2}}_{L_{x,v}^{\infty}}}\\
 &+\al\lrs{\n{w_{l}\pa_{x}^{k}g_{1}}_{L_{x,v}^{\infty}}
+\n{w_{l}\pa_{x}^{k}g_{2}}_{L_{x,v}^{\infty}}}.
\end{align*}

Putting together estimates for $I_{1,i}$, and absorbing $(\frac{1}{l}+\al)\n{g_{1}}_{\ol{X}_{k,t}}$, we arrive at
\begin{align*}
\n{g_{1}}_{\ol{X}_{0,t}}\lesssim &\n{w_{l}\wt{f}_{0}}_{L_{x,v}^{\infty}}+\al\n{g_{2}}_{\ol{X}_{0,t}}+\lrs{\n{g_{1}}_{\ol{X}_{0,t}}+\n{g_{2}}_{\ol{X}_{0,t}}}
\lrs{\n{g_{1}}_{\ol{X}_{0,t}}+\n{g_{2}}_{\ol{X}_{0,t}}},\\
\n{g_{1}}_{\ol{X}_{1,t}}\lesssim & \n{w_{l}\pa_{x}\wt{f}_{0}}_{L_{x,v}^{\infty}}+\al\n{g_{2}}_{\ol{X}_{1,t}}+\lrs{\n{g_{1}}_{\ol{X}_{0,t}}+\n{g_{2}}_{\ol{X}_{0,t}}}
\lrs{\n{g_{1}}_{\ol{X}_{1,t}}+\n{g_{2}}_{\ol{X}_{1,t}}},
\end{align*}
which completes the proof of \eqref{equ:Linf,g1,g2,0}--\eqref{equ:Linf,g1,g2,1}.

For \eqref{equ:Linf,g2,L2,0}--\eqref{equ:Linf,g2,L2,1}, analogous to \eqref{equ:items,I2,i}, we have
\begin{align*}
e^{\la_{0}t}|w_{l} \pa_{x}^{k} g_{2}(t)|\leq e^{\la_{0}t}\sum_{i=1}^{3}\abs{I_{2,i}},
\end{align*}
where
\begin{equation*}
\left\{
\begin{aligned}
I_{2,1}= & e^{-\int_0^t \mathcal{A}(\tau, V(\tau)) d  \tau}\left(w_{l} \pa_{x}^{k}g_{2}\right)(0,X(0), V(0)), \\
I_{2,2}=& \int_0^t e^{-\int_s^t \mathcal{A}(\tau, V(\tau)) d  \tau}\lrc{\chi_{M} \mu^{-\frac{1}{2}} w_{l} K_{\mu}\left(\pa_{x}^{k}g_{1}\right)}(s,X(s),V(s)) d  s,\\
I_{2,3}=&\int_0^t e^{-\int_s^t \mathcal{A}(\tau, V(\tau)) d  \tau}\left[w_{l} K\left(\pa_{x}^{k}g_{2}\right)\right](s,X(s),V(s)) d  s.
\end{aligned}
\right.
\end{equation*}

For $I_{2,1}$ and $I_{2,2}$, by Lemma \ref{lemma:Q,bilinear estimate,Lp}, we derive
\begin{align*}
&e^{\la_{0}t}|I_{2,1}|+e^{\la_{0}t}|I_{2,2}|\\
\leq& C_{M}
\lrs{\int_{0}^{t}e^{-\frac{\nu_{0}}{2}(t-s)}e^{\la_{0}(t-s)}ds} \lrs{\sup_{s\in[0,t]}e^{\la_{0}s}\n{K_{\mu} (\pa_{x}^{k}g_{1})}_{L_{x,v}^{\infty}}}
\leq C_{M}\n{g_{1}}_{\ol{X}_{k,t}}.
\end{align*}

For $I_{2,3}$, we decompose it as
\begin{align*}
I_{2,3}=J_{1}+J_{2},
\end{align*}
where
\begin{align*}
J_{1}=&\int_{t-\ve}^{t} e^{-\int_s^t \mathcal{A}(\tau, V(\tau)) d  \tau}\left[w_{l} K\left(\pa_{x}^{k}g_{2}\right)\right](s,X(s),V(s)) d  s,\\
J_{2}=&\int_0^{t-\ve} e^{-\int_s^t \mathcal{A}(\tau, V(\tau)) d  \tau}\left[w_{l} K\left(\pa_{x}^{k}g_{2}\right)\right](s,X(s),V(s)) d  s.
\end{align*}

For $J_{1}$, using Lemma \ref{lemma:Gamma,bilinear estimate}, we obtain
\begin{align*}
e^{\la_{0}t}|J_{1}|\leq \ve \sup_{s\in[0,t]}e^{\la_{0}s}\n{w_{l}K(\pa_{x}^{k}g_{2})(s)}_{L_{x,v}^{\infty}}\leq \ve
\n{g_{2}}_{\ol{X}_{k,t}}.
\end{align*}

For $J_{2}$, we proceed with the following decomposition
\begin{align*}
e^{\la_{0}t}|J_{2}|=&e^{\la_{0}t}\int_{0}^{t-\ve} e^{-\int_s^t \mathcal{A}(\tau, V(\tau)) d  \tau}w_{l}(V(s)) k(V(s),\xi)\left(\pa_{x}^{k}g_{2}\right)(s,X(s),\xi) d\xi d  s\\
\leq& e^{\la_{0}t}J_{2,1}+e^{\la_{0}t}J_{2,2}+e^{\la_{0}t}J_{2,3}+e^{\la_{0}t}J_{2,4},
\end{align*}
where $k(v,v_{*})$ denotes the kernel function of the collision operator $K$ and the terms $J_{2,i}$ are defined as
\begin{equation*}
\left\{
\begin{aligned}
J_{2,1}=&\int_{0}^{t-\ve} e^{-\frac{\nu_{0}}{2}(t-s)}w_{l}(V(s)) \babs{k_{1,M}(V(s),\xi)\left(\pa_{x}^{k}g_{2}\right)(s,X(s),\xi)} d\xi d  s,\\
J_{2,2}=&\int_{0}^{t-\ve} e^{-\frac{\nu_{0}}{2}(t-s)}w_{l}(V(s)) \babs{k_{2,M}(V(s),\xi)\left(\pa_{x}^{k}g_{2}\right)(s,X(s),\xi)} d\xi d  s,\\
J_{2,3}=&\int_{0}^{t-\ve} e^{-\frac{\nu_{0}}{2}(t-s)}w_{l}(V(s)) \babs{k_{3,M}(V(s),\xi)\left(\pa_{x}^{k}g_{2}\right)(s,X(s),\xi)} d\xi d  s,\\
J_{2,4}=&\int_{0}^{t-\ve} e^{-\frac{\nu_{0}}{2}(t-s)}w_{l}(V(s)) \babs{k_{4,M}(V(s),\xi)\left(\pa_{x}^{k}g_{2}\right)(s,X(s),\xi)} d\xi d  s,
\end{aligned}
\right.
\end{equation*}
with the kernel functions given by
\begin{equation*}
\left\{
\begin{aligned}
k_{1,M}(V(s),\xi)=&1_{\lr{|V(s)|\geq M}}k(V(s),\xi),\\
k_{2,M}(V(s),\xi)=&1_{\lr{|V(s)|\leq M,\ |\xi|\geq 2M}}k(V(s),\xi),\\
k_{3,M}(V(s),\xi)=&1_{\lr{|V(s)|\leq M,\ |\xi|\geq 2M,\ |V(s)-\xi|\leq \frac{1}{M}}}k(V(s),\xi),\\
k_{4,M}(V(s),\xi)=&1_{\lr{|V(s)|\leq M,\ |\xi|\leq 2M,\ |V(s)-\xi|\geq \frac{1}{M}}}k(V(s),\xi).\\
\end{aligned}
\right.
\end{equation*}

For $J_{2,1}$, due to that $|V(s)|\geq M$, we apply \eqref{equ:kernel estimate,K,L1} in Lemma \ref{Lemma,L,K,estimates} to derive
\begin{align}\label{equ:estimate,J21,kernel}
\int_{\R^{3}} w_{l}(V(s))k_{1,M}(V(s),\xi)w_{-l}(\xi)d\xi\lesssim
 \frac{1_{\lr{|V(s)|\geq M}}}{1+|V(s)|}\lesssim \frac{1}{M}.
\end{align}
Consequently, we get
\begin{align*}
e^{\la_{0}t}J_{2,1}\leq& \sup_{s\in[0,t]}e^{\la_{0}s}\n{w_{l}\pa_{x}^{k}g_{2}(s)}_{L_{x,v}^{\infty}}
\lrs{\int_{0}^{t-\ve}e^{-\frac{\nu_{0}}{2}(t-s)}e^{\la_{0}(t-s)}ds}\\
&\times \sup_{s\in[0,t]}\bbabs{\int_{\R^{3}} w_{l}(V(s))k_{1,M}(V(s),\xi)w_{-l}(\xi)d\xi}
\lesssim \frac{1}{M}\n{g_{2}}_{\ol{X}_{k,t}}.
\end{align*}

For $J_{2,2}$, the conditions $|V(s)|\leq M$ and $|\xi|\geq 2M$ imply that $|V(s)-\xi|\geq M$. Thus, using again \eqref{equ:kernel estimate,K,L1} in Lemma \ref{Lemma,L,K,estimates}, we obtain
\begin{align*}
&\bbabs{\int_{\R^{3}}1_{\lr{|V(s)|\leq M,\ |\xi|\geq 2M}} w_{l}(V(s))k(V(s),\xi)w_{-l}(\xi)d\xi}\\
\lesssim& e^{\frac{-\delta M^{2}}{8}}\bbabs{\int_{\R^{3}}1_{\lr{|V(s)-\xi|\geq M}} w_{l}(V(s))k(V(s),\xi)w_{-l}(\xi)e^{\frac{\delta|V(s)-\xi|^{2}}{8}}d\xi}\lesssim e^{\frac{-\delta M^{2}}{8}},
\end{align*}
which yields that
\begin{align*}
e^{\la_{0}t}J_{2,2}\lesssim e^{\frac{-\delta M^{2}}{8}}\n{g_{2}}_{\ol{X}_{k,t}}.
\end{align*}

For $J_{2,3}$, by the pointwise estimate \eqref{equ:kernel estimate,K,pointwise} in Lemma \ref{Lemma,L,K,estimates}, we get
\begin{align*}
&\int_{\R^{3}} 1_{\lr{|V(s)|\leq M,\ |\xi|\geq 2M,\ |V(s)-\xi|\leq \frac{1}{M}}}
 w_{l}(V(s))k(V(s),\xi)w_{-l}(\xi)d\xi\\
 \leq&\int_{|V(s)-\xi|\leq\frac{1}{M}}|V(s)-\xi|^{-2}d\xi\lesssim \frac{1}{M}.
\end{align*}

Combining estimates for $I_{2,i}$ and $J_{2,i}$ for $i=1,2,3$, we arrive at
\begin{align}\label{equ:Linf,g2,L2,1,first,proof}
&e^{\la_{0}t}|w_{l} \pa_{x}^{k} g_{2}(t)|\\
\leq &C_{M}\n{g_{1}}_{\ol{X}_{k,t}}+(\ve+M^{-1})
\n{g_{2}}_{\ol{X}_{k,t}}+e^{\la_{0}t}J_{2,4}\notag\\
= & C_{M}\n{g_{1}}_{\ol{X}_{k,t}}+(\ve+M^{-1})
\n{g_{2}}_{\ol{X}_{k,t}}\notag\\
&+\int_{0}^{t-\ve} e^{-\frac{\nu_{0}}{2}(t-s)}e^{\la_{0}(t-s)}w_{l}(V(s)) \babs{k_{4,M}(V(s),\xi)\left(e^{\la_{0}s}\pa_{x}^{k}g_{2}\right)(s,X(s),\xi)} d\xi d  s.\notag
\end{align}
Substituting this back into \eqref{equ:Linf,g2,L2,1,first,proof}, we refine the estimate to
\begin{align}\label{equ:Linf,L2,J12}
e^{\la_{0}t}|w_{l} \pa_{x}^{k} g_{2}(t)|
\leq & C_{M}\n{g_{1}}_{\ol{X}_{k,t}}+(\ve+M^{-1})
\n{g_{2}}_{\ol{X}_{k,t}}+e^{\la_{0}t}J_{2,4,1}+e^{\la_{0}t}J_{2,4,2},
\end{align}
where
\begin{align*}
e^{\la_{0}t}J_{2,4,1}=&
\int_{0}^{t-\ve} e^{-\frac{\nu_{0}}{2}(t-s)}e^{\la_{0}(t-s)}\int_{\R^{3}}w_{l}(V(s))\babs{k_{4,M}(V(s),\xi)}w_{-l}(\xi) d\xi d  s\\
&\times\lrs{C_{M}\n{g_{1}}_{\ol{X}_{k,t}}+(\ve+M^{-1})
\n{g_{2}}_{\ol{X}_{k,t}}},
\end{align*}
and
\begin{align*}
&e^{\la_{0}t}J_{2,4,2}\\
=&
\int_{0}^{t-\ve} e^{-\frac{\nu_{0}}{2}(t-s)}e^{\la_{0} (t-s)}\int_{\R^{3}}w_{l}(V(s))\babs{k_{4,M}(V(s),\xi)}w_{-l}(\xi) \\
&\int_{0}^{s-\ve}e^{-\frac{\nu_{0}}{2}(s-\wt{s})}e^{\la_{0}(s-\wt{s})}\int_{\R^{3}}
w_{l}(\wt{V}(\wt{s}))\babs{k_{4,M}(\wt{V}(\wt{s}),\wt{\xi})}\lrs{e^{\la_{0}\wt{s}}\pa_{x}^{k}g_{2}}(\wt{s},\wt{X}(\wt{s}),\wt{\xi}) d\wt{\xi}d\wt{s}d\xi d  s
\end{align*}
with
\begin{align*}
\wt{V}(\wt{s})=V(\wt{s};s,X(s),\xi),\quad \wt{X}(\wt{s})=X(\wt{s};s,X(s),\xi).
\end{align*}

For $e^{\la_{0}t}J_{2,4,1}$, following the estimate \eqref{equ:estimate,J21,kernel} for $e^{\la_{0}t}J_{2,1}$, we have
\begin{align}\label{equ:Linf,L2,J241}
e^{\la_{0}t}J_{2,4,1}
\leq&\int_{0}^{t-\ve} e^{-\frac{\nu_{0}}{2}(t-s)}e^{\la_{0}(t-s)} d  s
\lrs{C_{M}\n{g_{1}}_{\ol{X}_{k,t}}+(\ve+M^{-1})
\n{g_{2}}_{\ol{X}_{k,t}}}\\
\leq&C_{M}\n{g_{1}}_{\ol{X}_{k,t}}+(\ve+M^{-1})
\n{g_{2}}_{\ol{X}_{k,t}}.\notag
\end{align}

For $e^{\la_{0}t}J_{2,4,2}$, using the pointwise estimate \eqref{equ:kernel estimate,K,pointwise} and the cutoff property, we get
\begin{align*}
w_{l}(V(s))\babs{k_{4,M}(V(s),\xi)}\leq C_{M}1_{\lr{|V(s)|\leq M,\ |\xi|\leq 2M,\ |V(s)-\xi|\geq \frac{1}{M}}},
\end{align*}
and hence obtain
\begin{align*}
e^{\la_{0}t}J_{2,4,2}\lesssim& \int_{0}^{t-\ve} e^{-\frac{\nu_{0}}{2}(t-s)}e^{\la_{0} (t-s)}
\int_{0}^{s-\ve}e^{-\frac{\nu_{0}}{2}(s-\wt{s})}e^{\la_{0}(s-\wt{s})}\\
&\int_{|\xi|\leq 2M}\int_{|\wt{\xi}|\leq 2M}
\babs{\lrs{e^{\la_{0}\wt{s}}\pa_{x}^{k}g_{2}}(\wt{s},\wt{X}(\wt{s}),\wt{\xi})} d\wt{\xi}d\wt{s}d\xi d  s.
\end{align*}
Recalling the characteristic trajectory given in \eqref{equ:characteristic ODE}--\eqref{equ:characteristic ODE,solution} that
\begin{equation*}
\left\{\begin{aligned}
X_{1}(s ; t, x, v) & =e^{\beta t}\left(e^{-\beta t}x_{1}-(t-s) v_{1}-\frac{1}{2} \alpha(t-s)^2 v_{2}\right), \\
X_{i}(s ; t, x, v) & =e^{\beta t}\left(e^{-\beta t}x_{i}-(t-s) v_i\right),\quad i=2,3, \\
V_1(s ; t, x, v) & =e^{\beta(t-s)}\left(v_{1}+\alpha v_{2}(t-s)\right), \\
V_i(s ; t, x, v) & =e^{\beta(t-s)} v_i ,\quad i=2,3,
\end{aligned}\right.
\end{equation*}
we obtain an explicit formula that
\begin{align*}
&\wt{X}(\wt{s})=X(\wt{s};s,X(s),\xi)\\
=&\begin{pmatrix}
e^{\beta s}\left(e^{-\beta s}X_{1}(s)-(s-\wt{s}) \xi_1-\frac{1}{2}
\alpha(s-\wt{s})^2 \xi_2\right)\\
e^{\beta s}(e^{-\beta s}X_{2}(s)-(s-\wt{s})\xi_{2})\\
e^{\beta s}(e^{-\beta s}X_{3}(s)-(s-\wt{s})\xi_{3})\\
\end{pmatrix}\\
=&\begin{pmatrix}
x_{1}-e^{\beta t}(t-s) v_{1}-\frac{1}{2} \alpha e^{\beta t}(t-s)^2 v_{2}-e^{\beta s}(s-\wt{s}) \xi_1-\frac{1}{2}
\alpha e^{\beta s}(s-\wt{s})^2 \xi_2\\
x_{2}-e^{\beta t}(t-s) v_{2}-e^{\beta s}(s-\wt{s})\xi_{2}\\
x_{3}-e^{\beta t}(t-s) v_{3}-e^{\beta s}(s-\wt{s})\xi_{3}\\
\end{pmatrix}.
\end{align*}
Equivalently,
\begin{align*}
\wt{X}(\wt{s})=\eta-\mathcal{T}\xi,
\end{align*}
where
\begin{align*}
\eta=\begin{pmatrix}
x_{1}-e^{\beta t}(t-s)v_{1}-\frac{1}{2}\al e^{\beta t}(t-s)^{2}v_{2}\\
x_{2}-e^{\beta t}(t-s)v_{2}\\
x_{3}-e^{\beta t}(t-s)v_{3}
\end{pmatrix},\quad
\xi=\begin{pmatrix}
\xi_{1}\\
\xi_{2}\\
\xi_{3}
\end{pmatrix},
\end{align*}
and
\begin{align*}
\mathcal{T}=\begin{pmatrix}
e^{\beta s}(s-\wt{s}) &\frac{1}{2}\al e^{\beta s}(s-\wt{s})^{2} &0 \\
0& e^{\beta s}(s-\wt{s}) &0\\
0& 0& e^{\beta s}(s-\wt{s})
\end{pmatrix}.
\end{align*}
Performing the change of variables $y=\wt{X}(\wt{s})=\eta-\mathcal{T}\xi$, we derive
\begin{align*}
&\int_{|\xi|\leq 2M}\int_{|\wt{\xi}|\leq 2M}
\babs{\lrs{e^{\la_{0}\wt{s}}\pa_{x}^{k}g_{2}}(\wt{s},\wt{X}(s),\wt{\xi})} d\wt{\xi}d\xi \\
=&\int_{|\xi|\leq 2M}\int_{|\wt{\xi}|\leq 2M}
\babs{\lrs{e^{\la_{0}\wt{s}}\pa_{x}^{k}g_{2}}(\wt{s},\eta-\mathcal{T}\xi,\wt{\xi})} d\wt{\xi}d\xi \\
\leq&\int_{|y-\eta|\lesssim M\n{\mathcal{T}}_{L^{1}}}\int_{|\wt{\xi}|\leq 2M}
\babs{\lrs{e^{\la_{0}\wt{s}}\pa_{x}^{k}g_{2}}(\wt{s},y,\wt{\xi})} |\operatorname{det}\mathcal{T}|^{-1}d\wt{\xi}dy \\
\lesssim&(\n{\mathcal{T}}_{L^{1}})^{3}|\operatorname{det}\mathcal{T}|^{-1} \lrs{\int_{\T^{3}}\int_{\R^{3}} \babs{\lrs{e^{\la_{0}\wt{s}}\pa_{x}^{k}g_{2}}(\wt{s},y,\xi)}^{2}dyd\xi}^{\frac{1}{2}}\\
\lesssim& \lrs{1+(s-\wt{s})^{3}}\sup_{\wt{s}\in[0,t]}e^{\la_{0}\wt{s}}\n{\pa_{x}^{k}g_{2}(\wt{s})}_{L_{x,v}^{2}},
\end{align*}
where the last inequality follows from the estimates
\begin{align*}
\n{\mathcal{T}}_{L^{1}}\lesssim e^{\beta s}(s-\wt{s})+\al e^{\beta s}(s-\wt{s})^{2},\quad
|\operatorname{det}\mathcal{T}|^{-1}=e^{-3\beta s}(s-\wt{s})^{-3}.
\end{align*}
Consequently, we obtain
\begin{align}\label{equ:Linf,L2,J242}
e^{\la_{0}t}J_{2,4,2}
\leq&\sup_{s\in[0,t]}\n{e^{\la_{0}s}\pa_{x}^{k}g_{2}(s)}_{L_{x,v}^{2}} \int_{0}^{t-\ve} e^{-\frac{\nu_{0}}{2}(t-s)}e^{\la_{0} (t-s)}\\
&\int_{0}^{s-\ve}e^{-\frac{\nu_{0}}{2}(s-\wt{s})}e^{\la_{0}(s-\wt{s})}\lrs{1+(s-\wt{s})^{3}}dsd\wt{s}\notag\\
\lesssim& \sup_{s\in[0,t]}e^{\la_{0}s}\n{\pa_{x}^{k}g_{2}(s)}_{L_{x,v}^{2}}.\notag
\end{align}

Combining estimates \eqref{equ:Linf,L2,J12}, \eqref{equ:Linf,L2,J241}, and \eqref{equ:Linf,L2,J242}, we arrive at
\begin{align}\label{equ:g2,Linf,L2,proof}
\n{g_{2}}_{\ol{X}_{k,t}}\lesssim \n{g_{1}}_{\ol{X}_{k,t}}+
\sup_{s\in[0,t]}e^{\la_{0}s}\n{\pa_{x}^{k}g_{2}(s)}_{L_{x,v}^{2}}.
\end{align}

Repeating the above argument employed in \eqref{equ:g2,Linf,L2,proof} for the case $k=0$, $\la_{0}=0$, we similarly obtain
\begin{align*}
\n{g_{2}}_{\ol{X}_{0,t}}\lesssim \n{g_{1}}_{\ol{X}_{0,t}}+
\sup_{s\in[0,t]}\n{g_{2}(s)}_{L_{x,v}^{2}}.
\end{align*}
Hence, we complete the proof of \eqref{equ:Linf,g2,L2,0}--\eqref{equ:Linf,g2,L2,1}. The proof of Lemma \ref{lemma,Linf,estimate} is done.
\end{proof}

\section{$L_{x,v}^{2}$ Energy Estimates}\label{sec:L2 Energy Estimates}
To provide a closed estimate for the weighted $L_{x,v}^{\infty}$ estimates derived in Section \ref{sec:Weighted Estimates}, we establish $L_{x,v}^{2}$ energy estimates through a systematic approach combining macro-micro decomposition and low-high frequency analysis. The macro-micro decomposition separates the distribution function
\begin{align*}
\wt{g}=\mathbf{P}_{0}\wt{g}+\mathbf{P}_{1}\wt{g},
\end{align*}
into its macroscopic (fluid) part $\mathbf{P}_{0}\wt{g}$ and microscopic (kinetic) part $\mathbf{P}_{1}\wt{g}$.

In Section \ref{sec:Lower Energy Estimates}, we conduct a detailed analysis of $L_{x,v}^{2}$ low energy estimates for the microscopic component, establishing bounds for the kinetic part of the solution. In Section \ref{sec:Moment Estimates}, we focus on estimating the macroscopic component, with attention to the second-order moment of zero-frequency modes, which captures essential macroscopic dynamics in non-conservative systems. Finally, in Section \ref{sec:Higher Energy Estimates}, we provide the higher energy estimate that upgrade the bounds obtained in Sections \ref{sec:Lower Energy Estimates}--\ref{sec:Moment Estimates}
to the full regularity level required for the nonlinear analysis.

\subsection{Lower Energy Estimates}\label{sec:Lower Energy Estimates}
We begin by defining the macroscopic and microscopic components of $\wt{g}$
\begin{align*}
\mathbf{P}_0 \wt{g}=:&\left\{a+\mathbf{b} \cdot v+c\frac{|v|^2-3}{\sqrt{6}}\right\} \sqrt{\mu},\quad  \mathbf{P}_{1} \wt{g}=:\wt{g}-\mathbf{P}_0 \wt{g},
\end{align*}
with the coefficients given by
\begin{equation*}
\left\{
\begin{aligned}
a(t,x)=&\lra{\wt{g},\sqrt{\mu}}=\lra{\wt{f},1},\\
\mathbf{b}(t,x)=&\lra{\wt{g},v\sqrt{\mu}}=\lra{\wt{f},v},\\
c(t,x)=&\lra{\wt{g},\frac{|v|^{2}-3}{\sqrt{6}}\sqrt{\mu}}=\lra{\wt{f},\frac{|v|^{2}-3}{\sqrt{6}}}.
\end{aligned}
\right.
\end{equation*}
Here, $\mathbf{P}_{0}$ is also
the projection operator onto the kernel of the linearized collision operator $L$ defined by \eqref{equ:notation,Lg}.

We now present the low energy estimates for both macroscopic and microscopic components.
\begin{lemma}\label{lemma:g2,L2,estimate,p0,p1}
Let $[g_{1},g_{2}]$ be the solution to \eqref{equ:g1,equation}--\eqref{equ:g2,equation}. The following energy estimates hold.
\begin{enumerate}[$(1)$]
\item The macroscopic estimate:
\begin{align}\label{equ:macro estimate,lower energy}
\sup_{s\in[0,t]} \left\|\mathbf{P}_0 g_{2}(s)\right\|_{L_{x,v}^{2}}\lesssim
\sup_{s\in[0,t]}\n{c(s)}_{L_{x}^{2}}+
\sup_{s\in[0,t]}\n{\pa_{x}[a,\mathbf{b}](s)}_{L_{x}^{2}}+
\sup_{s\in[0,t]}\n{w_{l}g_{1}(s)}_{L_{x,v}^{\infty}}.
\end{align}
\item
The microscopic estimate:
\begin{align}\label{equ:micro estimate,lower energy}
 \sup_{s\in[0,t]} \n{\mathbf{P}_{1} g_{2}(s)}_{L_{x,v}^{2}}
\lesssim & \sup_{s\in[0,t]} \n{c(s)}_{L_{x}^{2}}+\sup_{s\in[0,t]} \n{\pa_{x}[a,\mathbf{b}](s)}_{L_{x}^{2}}  +\sup_{s\in[0,t]} \left\|w_{l} g_{1}(s)\right\|_{L_{x,v}^{\infty}} .
\end{align}
\end{enumerate}
In particular, we have
\begin{align}\label{equ:g2,L2,estimate}
\sup_{s\in[0,t]}\n{g_{2}(s)}_{L_{x,v}^{2}}\lesssim&
  \sup_{s\in[0,t]} \n{c(s)}_{L_{x}^{2}}
   +
\sum_{k=0,1} \n{g_{1}}_{\ol{X}_{k,t}}+\sup_{s\in[0,t]} e^{\la_{0}s}\n{\pa_{x}g_{2}}_{L_{x}^{2}}.
\end{align}

\end{lemma}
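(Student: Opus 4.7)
The proof splits into three steps matching the three displayed bounds. For the macroscopic estimate \eqref{equ:macro estimate,lower energy}, the approach is essentially algebraic: write
\[
\mathbf{P}_{0}g_{2}=\lr{a_{2}+\mathbf{b}_{2}\cdot v+c_{2}\tfrac{|v|^{2}-3}{\sqrt{6}}}\sqrt{\mu},\quad a_{2}=\lra{g_{2},\sqrt{\mu}},\ \mathbf{b}_{2}=\lra{g_{2},v\sqrt{\mu}},\ c_{2}=\bblra{g_{2},\tfrac{|v|^{2}-3}{\sqrt{6}}\sqrt{\mu}},
\]
so that $\n{\mathbf{P}_{0}g_{2}}_{L_{x,v}^{2}}\lesssim \n{[a_{2},\mathbf{b}_{2},c_{2}]}_{L_{x}^{2}}$. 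The Caflisch decomposition $\wt{g}=\mu^{-1/2}g_{1}+g_{2}$ yields $[a_{2},\mathbf{b}_{2},c_{2}]=[a,\mathbf{b},c]-\lra{g_{1},[1,v,(|v|^{2}-3)/\sqrt{6}]}$, and the polynomial moments of $g_{1}$ are dominated by $\n{w_{l}g_{1}}_{L_{x,v}^{\infty}}$ as soon as $w_{l}^{-1}\lra{v}^{2}\in L_{v}^{1}$. The decisive input is that the initial data condition \eqref{equ:mass,moment,zero,initial}, combined with the conservation identities from \eqref{equ:conservation law} transported to the self-similar variable, yield $P_{0}^{x}a(t,\cdot)\equiv 0$ and $P_{0}^{x}\mathbf{b}(t,\cdot)\equiv 0$ for every $t\geq 0$. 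Poincar\'e on $\T^{3}$ then converts these into $\n{[a,\mathbf{b}]}_{L_{x}^{2}}\lesssim \n{\pa_{x}[a,\mathbf{b}]}_{L_{x}^{2}}$, whereas no analogous cancellation holds for $c$, whose mean encodes the growing energy; this is exactly why $\n{c}_{L_{x}^{2}}$, and not its $x$-derivative, appears on the right of \eqref{equ:macro estimate,lower energy}.

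For the microscopic estimate \eqref{equ:micro estimate,lower energy}, I would lean on the spectral gap of $L$ on the orthogonal complement of its kernel, $\lra{L\varphi,\varphi}_{v}\gtrsim \n{\mathbf{P}_{1}\varphi}_{L_{v}^{2},\nu}^{2}$. Since $L\mathbf{P}_{0}=0$, equation \eqref{equ:g2,equation} rewrites as
\[
L\mathbf{P}_{1}g_{2}=\mu^{-1/2}\chi_{M}K_{\mu}g_{1}-\pa_{t}g_{2}-e^{\beta t}v\cdot\nabla_{x}g_{2}+\beta\nabla_{v}(vg_{2})+\al v_{2}\pa_{v_{1}}g_{2}.
\]
Testing against $g_{2}$, the transport and $\al$-shear contributions vanish by integration by parts, while $\beta\lra{\nabla_{v}(vg_{2}),g_{2}}_{x,v}=\tfrac{3\beta}{2}\n{g_{2}}^{2}$ and $\lra{\pa_{t}g_{2},g_{2}}=\tfrac12\tfrac{d}{dt}\n{g_{2}}^{2}$. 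Decomposing $\n{g_{2}}^{2}=\n{\mathbf{P}_{0}g_{2}}^{2}+\n{\mathbf{P}_{1}g_{2}}^{2}$ and using smallness of $\beta$ to absorb $\tfrac{3\beta}{2}\n{\mathbf{P}_{1}g_{2}}^{2}$ into the dissipation $\nu_{0}\n{\mathbf{P}_{1}g_{2}}^{2}$, I would integrate in $s$ from $0$ to $t$, invoke $g_{2}(0)=0$, bound the source term by Cauchy--Schwarz through $\n{w_{l}g_{1}}_{L_{x,v}^{\infty}}$, and then replace the residual $\n{\mathbf{P}_{0}g_{2}}^{2}$ by the macroscopic bound already obtained. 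The main technical obstacle is the combined handling of $\pa_{t}g_{2}$ and the energy growth $\tfrac{3\beta}{2}\n{g_{2}}^{2}$: the first is tolerable only because $g_{2}(0)=0$ and both sides of \eqref{equ:micro estimate,lower energy} are suprema in time, and the second genuinely requires the smallness $\beta\lesssim\al$ established earlier in the paper.

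The combined bound \eqref{equ:g2,L2,estimate} finally follows by summing the macroscopic and microscopic estimates and trading $\n{\pa_{x}[a,\mathbf{b}]}_{L_{x}^{2}}$ for $\n{\pa_{x}g_{2}}$ plus $g_{1}$-data. Differentiating $a=\lra{g_{1},1}+\lra{g_{2},\sqrt{\mu}}$ and $\mathbf{b}=\lra{g_{1},v}+\lra{g_{2},v\sqrt{\mu}}$ in $x$ and applying Cauchy--Schwarz in $v$ against $\sqrt{\mu},v\sqrt{\mu}$ for the $g_{2}$-parts, together with a pointwise estimate against $w_{l}^{-1}\lra{v}$ for the $g_{1}$-parts, yields $\n{\pa_{x}[a,\mathbf{b}]}_{L_{x}^{2}}\lesssim \n{\pa_{x}g_{2}}_{L_{x,v}^{2}}+\n{w_{l}\pa_{x}g_{1}}_{L_{x,v}^{\infty}}$. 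The exponential weight $e^{\la_{0}s}$ attached to $\pa_{x}g_{2}$ in the statement is inoffensive here since $e^{\la_{0}s}\geq 1$, and it is the form directly compatible with the non-zero-frequency exponential decay that will be exploited in the sequel.
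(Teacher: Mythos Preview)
Your treatment of the macroscopic estimate \eqref{equ:macro estimate,lower energy} and of the combined bound \eqref{equ:g2,L2,estimate} is correct and matches the paper's argument essentially line by line.

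There is, however, a genuine gap in your microscopic estimate. After testing \eqref{equ:g2,equation} against $g_{2}$ and using the spectral gap, you obtain (as does the paper)
\[
\frac{d}{dt}\n{g_{2}}_{L_{x,v}^{2}}^{2}+\la_{0}\n{\mathbf{P}_{1}g_{2}}_{L_{x,v}^{2}}^{2}\lesssim \n{\mathbf{P}_{0}g_{2}}_{L_{x,v}^{2}}^{2}+\n{w_{l}g_{1}}_{L_{x,v}^{\infty}}^{2}.
\]
If you now ``integrate in $s$ from $0$ to $t$'' as you propose, the right-hand side becomes $\int_{0}^{t}\lrc{\n{\mathbf{P}_{0}g_{2}}^{2}+\n{w_{l}g_{1}}^{2}}ds$, which is only controlled by $t\cdot\sup_{s\in[0,t]}[\cdots]$ and therefore grows linearly in $t$. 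The initial condition $g_{2}(0)=0$ and the sup-in-time structure of the statement do not repair this: you need a \emph{uniform-in-$t$} constant, and plain integration does not deliver one.

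The paper avoids this by rewriting the inequality as an ODE for $\n{\mathbf{P}_{1}g_{2}}^{2}$ with damping $\la_{0}$ and forcing $-\frac{d}{ds}\n{\mathbf{P}_{0}g_{2}}^{2}+\n{\mathbf{P}_{0}g_{2}}^{2}+\n{w_{l}g_{1}}^{2}$, and then applying Gronwall in Duhamel form with kernel $e^{-\la_{0}(t-s)}$. The exponential weight makes $\int_{0}^{t}e^{-\la_{0}(t-s)}ds\leq \la_{0}^{-1}$ uniform in $t$, so the last two forcing terms are bounded by their suprema; the derivative term $\int_{0}^{t}e^{-\la_{0}(t-s)}\frac{d}{ds}\n{\mathbf{P}_{0}g_{2}}^{2}ds$ is handled by integration by parts in $s$, yielding again only $\sup_{s}\n{\mathbf{P}_{0}g_{2}(s)}^{2}$. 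This is the missing mechanism: it is the dissipation $\la_{0}\n{\mathbf{P}_{1}g_{2}}^{2}$, exploited through the exponential Gronwall weight rather than discarded after integration, that converts time integrals into uniform suprema.
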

\begin{proof}
For the macroscopic estimate \eqref{equ:macro estimate,lower energy}, noting that $\wt{g}=\mu^{-\frac{1}{2}}g_{1}+g_{2}$, we get
\begin{align*}
\n{\mathbf{P}_0 g_{2}}_{L_{v}^{2}}\leq& \n{\mathbf{P}_0 \wt{g}}_{L_{v}^{2}}+\n{\mathbf{P}_0 (\mu^{-\frac{1}{2}}g_{1})}_{L_{v}^{2}}\lesssim |[a,\mathbf{b},c]|+\bbabs{\bblra{g_{1},[1,v,\frac{|v|^{2}-3}{\sqrt{6}}]}}\\
\lesssim& |[a,\mathbf{b},c]|+\n{w_{l}g_{1}}_{L_{v}^{\infty}}.\notag
\end{align*}
By the mass and momentum conservation laws \eqref{equ:mass,moment,conservation} in Lemma \ref{lemma:evolution equation,moments}, and the Poincar\'e inequality that $\n{P_{\neq 0}^{x}u}_{L_{x}^{2}}\leq \n{u}_{L_{x}^{2}}$, we obtain
\begin{align*}
\n{\mathbf{P}_0 g_{2}}_{L_{x,v}^{2}}\lesssim \n{c}_{L_{x}^{2}}+\n{\pa_{x}[a,\mathbf{b}]}_{L_{x}^{2}}+\n{w_{l}g_{1}}_{L_{x,v}^{\infty}},
\end{align*}
which completes the proof of \eqref{equ:macro estimate,lower energy}.

For the microscopic part estimate \eqref{equ:micro estimate,lower energy},
we recall that
\begin{equation}\label{equ:g2,equation,energy estimate,high}
\left\{
\begin{aligned}
&\pa_t g_{2}+e^{\beta t} \sum_{i=1}^{3}v_{i}\pa_{x_{i}} g_{2}-\beta \nabla_{v} \cdot\left(v g_{2}\right)-\alpha v_{2} \pa_{v_{1}} g_{2}+L g_{2} =\mu^{-1 / 2}\chi_{M}K_{\mu} g_{1},  \\
&g_{2}(0, x, v)=0 .
\end{aligned}\right.
\end{equation}
Testing \eqref{equ:g2,equation,energy estimate,high} by $g_{2}$, we obtain
\begin{align*}
\lra{\pa_t g_{2}+e^{\beta t} \sum_{i=1}^{3}v_{i}\pa_{x_{i}} g_{2}-\beta \nabla_{v} \cdot\left(v g_{2}\right)-\alpha v_{2} \pa_{v_{1}} g_{2}+L g_{2} ,g_{2}}=\lra{\mu^{-1 / 2}\chi_{M}K_{\mu} g_{1},g_{2}},
\end{align*}
which, together with integration by parts, yields
\begin{align}\label{equ:g2,equation,test,g2}
\frac{1}{2}\pa_{t}\lra{g_{2},g_{2}}-\frac{3\beta}{2}\lra{g_{2},g_{2}}+\lra{Lg_{2},g_{2}}=
\lra{\mu^{-1 / 2}\chi_{M}K_{\mu} g_{1},g_{2}}.
\end{align}
By Lemma \ref{Lemma,L,K,estimates}, we have
\begin{align}\label{equ:L,lower bound,proof}
\lra{Lg_{2},g_{2}}\geq \delta_{0}\lra{\mathbf{P}_{1}g_{2},\mathbf{P}_{1}g_{2}}.
\end{align} Using Young's inequality and estimate \eqref{equ:Kmu,estimate,Lp} in Lemma \ref{lemma:Q,bilinear estimate,Lp}, we derive
\begin{align}\label{equ:L2,energy estimate,force,proof}
\lra{\mu^{-1 / 2}\chi_{M} K_{\mu} g_{1},g_{2}}\leq &
\ve \n{g_{2}}_{L_{x,v}^{2}}^{2}+C_{\ve}\n{\mu^{-1 / 2}\chi_{M} K_{\mu} g_{1}}_{L_{x,v}^{2}}^{2}\\
\leq&
\ve\n{g_{2}}_{L_{x,v}^{2}}^{2}+C_{\ve}C_{M}\n{ K_{\mu} g_{1}}_{L_{x,v}^{2}}^{2}\notag\\
\leq& \ve\n{g_{2}}_{L_{x,v}^{2}}^{2}+C_{\ve,M}\n{w_{l}g_{1}}_{L_{x,v}^{\infty}}^{2}.\notag
\end{align}
Putting \eqref{equ:L,lower bound,proof} and \eqref{equ:L2,energy estimate,force,proof} into \eqref{equ:g2,equation,test,g2}, and choosing that $\la_{0}\leq \delta_{0}-\frac{3\beta}{2}-\ve$, we arrive at
\begin{align}\label{equ:micro estimate,lower energy,proof}
\frac{d}{dt}\n{\mathbf{P}_{1}g_{2}}_{L_{x,v}^{2}}^{2}+\frac{d}{dt}\n{\mathbf{P}_{0}
g_{2}}_{L_{x,v}^{2}}^{2}+\la_{0}\n{\mathbf{P}_{1}g_{2}}_{L_{x,v}^{2}}^{2}\leq \ve\n{\mathbf{P}_{0}g_{2}}_{L_{x,v}^{2}}^{2}
+C_{\ve,M}\n{w_{l}g_{1}}_{L_{x,v}^{\infty}}^{2}.
\end{align}
Then by Gronwall's inequality, we get
\begin{align*}
\n{\mathbf{P}_{1}g_{2}(t)}_{L_{x,v}^{2}}^{2}\lesssim& e^{-\la_{0}t}\n{\mathbf{P}_{1}g_{2}(0)}_{L_{x,v}^{2}}^{2}+
\bbabs{\int_{0}^{t}
e^{-\la_{0}(t-s)}\frac{d}{ds}\n{\mathbf{P}_{0}g_{2}}_{L_{x,v}^{2}}^{2}ds}\\
&+\int_{0}^{t}
e^{-\la_{0}(t-s)}\lrc{\n{\mathbf{P}_{0}g_{2}(s)}_{L_{x,v}^{2}}^{2}+\n{w_{l}g_{1}(s)}_{L_{x,v}^{\infty}}^{2}}ds\\
\lesssim& e^{-\la_{0}t}\n{\mathbf{P}_{1}g_{2}(0)}_{L_{x,v}^{2}}^{2}+\sup_{s\in[0,t]}\n{\mathbf{P}_{0}g_{2}(s)}_{L_{x,v}^{2}}^{2}+
\sup_{s\in[0,t]}\n{w_{l}g_{1}(s)}_{L_{x,v}^{\infty}}^{2},
\end{align*}
which, together with the zero initial data and estimate \eqref{equ:macro estimate,lower energy} on $\n{\mathbf{P}_{0}g_{2}}_{L_{x,v}^{2}}$, completes the proof of estimate \eqref{equ:micro estimate,lower energy}.

Combining \eqref{equ:macro estimate,lower energy} and \eqref{equ:micro estimate,lower energy}, we obtain
\begin{align*}
\sup_{s\in[0,t]}\n{g_{2}(s)}_{L_{x,v}^{2}}
\lesssim& \sup_{s\in[0,t]} \n{c(s)}_{L_{x}^{2}}+
\sup_{s\in[0,t]} \n{\pa_{x}[a,\mathbf{b}](s)}_{L_{x}^{2}}+\sup_{s\in[0,t]} \left\|w_{l} g_{1}(s)\right\|_{L_{x,v}^{\infty}} \\
\lesssim&  \sup_{s\in[0,t]} \n{c(s)}_{L_{x}^{2}}
   +\sup_{s\in[0,t]} e^{\la_{0}s}\n{\pa_{x}g_{2}}_{L_{x}^{2}}+
\sum_{k=0,1} \n{g_{1}}_{\ol{X}_{k,t}},
\end{align*}
where in the last inequality we have used that
\begin{align*}
\n{\pa_{x}[a,\mathbf{b}]}_{L_{x}^{2}}=&\n{\lra{\pa_{x}\wt{f},[1,v]}}_{L_{x}^{2}}=
\n{\lra{\pa_{x}g_{1}+\pa_{x}g_{2}\mu^{\frac{1}{2}},[1,v]}}_{L_{x}^{2}}\\
\lesssim& \n{w_{l}\pa_{x}g_{1}}_{L_{x,v}^{\infty}}+\n{\pa_{x}g_{2}}_{L_{x,v}^{2}}.
\end{align*}
Hence, we have completed the proof of \eqref{equ:g2,L2,estimate}.
\end{proof}

\subsection{Moment Estimates}\label{sec:Moment Estimates}
To close the lower energy estimate via Lemma \ref{lemma:g2,L2,estimate,p0,p1}, a global estimate for the energy moment $\sup_{s\in[0,t]}\n{c(s)}_{L_{x}^{2}}$ is required. As a first step, we decompose the solution into zero-frequency and nonzero-frequency modes:
\begin{align*}
\sup_{s\in[0,t]}\n{c(s)}_{L_{x}^{2}}\leq \sup_{s\in[0,t]}\n{P_{0}^{x}c(s)}_{L_{x}^{2}}+
\sup_{s\in[0,t]}\n{P_{\neq 0}^{x}c(s)}_{L_{x}^{2}},
\end{align*}
where the projectors are defined by
 \begin{align*}
  P_{0}^{x}\phi=:\int_{\T^{3}}\phi(x)dx,\quad P_{\neq 0}^{x}\phi=:\phi-P_{0}^{x}\phi.
 \end{align*}

In this section, we focus on estimating the zero-frequency mode.
We begin by introducing the following hydrodynamic moments
\begin{equation}\label{equ:hydrodynamic moments}
\left\{
\begin{aligned}
E(t,x)=&\lra{\wt{g},|v|^{2}\sqrt{\mu}}=\lra{\wt{f},|v|^{2}},\\
d_{ij}(t,x)=&\lra{\wt{g},v_{i}v_{j}\sqrt{\mu}}=\lra{\wt{f},v_{i}v_{j}},\quad
d_{ijk}(t,x)=\lra{\wt{g},v_{i}v_{j}v_{k}\sqrt{\mu}}=\lra{\wt{f},v_{i}v_{j}v_{k}},\\
B_{i}=&\lra{\wt{g},v_{i}(|v|^{2}-5)\sqrt{\mu}}=\lra{\wt{f},v_{i}(|v|^{2}-5)},\\
B_{ij}=&\lra{\wt{g},v_{i}v_{j}(|v|^{2}-5)\sqrt{\mu}}=\lra{\wt{f},v_{i}v_{j}(|v|^{2}-5)}.\\
\end{aligned}
\right.
\end{equation}

As a preliminary step, we derive the evolution equations for these moments.
\begin{lemma}\label{lemma:evolution equation,moments}
Let $\wt{\mathcal{Q}}=Q(\wt{f},\wt{f})+Q_{sym}(\wt{f},G)$. Then there holds that
\begin{align}
&\pa_{t}a+e^{\beta t}\sum_{i=1}^{3}\pa_{x_{i}}b_{i}=0,\label{equmomentequationa}\\
&\pa_t b_1+\beta b_1+ \alpha b_2+e^{\beta t} \sum_{i=1}^{3}\pa_{x_{i}}d_{1i}=0,\label{equmomentequationb1}\\
&\pa_t b_i+\beta b_i+e^{\beta t} \sum_{j=1}^{3}\pa_{x_{j}} d_{ij}=0, \quad i=2,3,\label{equmomentequationbi}\\
&\pa_{t}c+\beta (\sqrt{6}a+2c)+\al  \frac{\sqrt{6}}{3}d_{12}+e^{\beta t}\sum_{i=1}^{3}\pa_{x_{i}}\lra{\wt{f},v_{i}\frac{|v|^{2}-3}{\sqrt{6}}}=0,\label{equ:moment,equation,c}\\
&\pa_{t}E+2\beta E+2\al d_{12}+e^{\beta t}\sum_{i=1}^{3}\pa_{x_{i}}\lra{\wt{f},v_{i}|v|^{2}}=0,\label{equ:moment,equation,E}\\
&\pa_{t}d_{11}+2\beta d_{11}+2\al d_{12}+e^{\beta t}\sum_{i=1}^{3}\pa_{x_{i}}d_{11i}=\lra{\wt{\mathcal{Q}},v_{1}v_{1}},\label{equ:moment,equation,d11}\\
&\pa_{t}d_{22}+2\beta d_{22}+e^{\beta t}\sum_{i=1}^{3}\pa_{x_{i}}d_{22i}=\lra{\wt{\mathcal{Q}},v_{2}v_{2}},\label{equ:moment,equation,d22}\\
&\pa_{t}d_{33}+2\beta d_{33}+e^{\beta t}\sum_{i=1}^{3}\pa_{x_{i}}d_{33i}=\lra{\wt{\mathcal{Q}},v_{3}v_{3}},\label{equ:moment,equation,d33}\\
&\pa_{t}d_{12}+2\beta d_{12}+\al d_{22}+e^{\beta t}\sum_{i=1}^{3}\pa_{x_{i}}d_{12i}=\lra{\wt{\mathcal{Q}},v_{1}v_{2}},\label{equ:moment,equation,d12}\\
&\pa_{t}d_{13}+2\beta d_{13}+\al d_{23}+e^{\beta t}\sum_{i=1}^{3}\pa_{x_{i}}d_{13i}=\lra{\wt{\mathcal{Q}},v_{1}v_{3}},\label{equ:moment,equation,d13}\\
&\pa_{t}d_{23}+2\beta d_{23}+e^{\beta t}\sum_{i=1}^{3}\pa_{x_{i}}d_{23i}=\lra{\wt{\mathcal{Q}},v_{2}v_{3}},\label{equ:moment,equation,d23}\\
&\pa_{t}B_{1}+3\beta B_{1}+10\beta b_{1}+3\al d_{112}+e^{\beta t}\sum_{i=1}^{3} \pa_{x_{i}} B_{1i}=
\lra{\wt{\mathcal{Q}},v_{1}(|v|^{2}-5)},\label{equmomentequationB1}\\
&\pa_{t}B_{2}+3\beta B_{2}+10\beta b_{2}+2\al d_{122}+e^{\beta t}\sum_{i=1}^{3} \pa_{x_{i}} B_{2i}=
\lra{\wt{\mathcal{Q}},v_{2}(|v|^{2}-5)},\label{equ:moment,equation,B2}\\
&\pa_{t}B_{3}+3\beta B_{3}+10\beta b_{3}+2\al d_{132}+e^{\beta t}\sum_{i=1}^{3} \pa_{x_{i}} B_{3i}=
\lra{\wt{\mathcal{Q}},v_{3}(|v|^{2}-5)}.\label{equ:moment,equation,B3}
\end{align}
In particular, we have the mass and moment conservation, that is,
\begin{align}\label{equ:mass,moment,conservation}
P_{0}^{x}a(t)=P_{0}^{x}b(t)=0,\quad t\geq 0.
\end{align}
Moreover, we have a basic estimate on the moments that
\begin{align}\label{equ:moment estimate,basic}
\abs{\lra{\wt{f},p_{n}(v)}}\lesssim \n{w_{l}g_{1}}_{L_{v}^{\infty}}+\n{g_{2}}_{L_{v}^{2}},\quad l> n+3,
\end{align}
where $p_{n}(v)$ is a polynomial of the order $n$.
\end{lemma}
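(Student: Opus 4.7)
The plan is to derive each moment equation by multiplying the perturbation equation \eqref{equ:perturbation equation,wf} by an appropriate polynomial test function $\varphi(v)$ and integrating over $\R^3$. Integrating by parts in the two drift terms, the resulting identity takes the schematic form
\begin{equation*}
\pa_t \lra{\wt{f},\varphi} + e^{\beta t}\sum_{i=1}^{3}\pa_{x_i}\lra{\wt{f}, v_i\varphi} + \beta\lra{\wt{f}, v\cdot\nabla_v\varphi} + \alpha\lra{\wt{f}, v_2\pa_{v_1}\varphi} = \lra{\wt{\mathcal{Q}},\varphi}.
\end{equation*}
Specializing $\varphi$ to the collision invariants $1, v_i, |v|^2$ (and their combination $(|v|^2-3)/\sqrt{6}$) kills the right-hand side and yields \eqref{equmomentequationa}--\eqref{equ:moment,equation,E}; choosing $\varphi = v_iv_j$ and $\varphi = v_i(|v|^2-5)$ produces the second-order moment equations \eqref{equ:moment,equation,d11}--\eqref{equ:moment,equation,d23} and the heat-flux equations \eqref{equmomentequationB1}--\eqref{equ:moment,equation,B3}, respectively. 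The only computational care needed is to expand $v\cdot\nabla_v\varphi$ and $v_2\pa_{v_1}\varphi$ and then re-express the result in terms of the named moments in \eqref{equ:hydrodynamic moments}.

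For the conservation property \eqref{equ:mass,moment,conservation}, I would integrate \eqref{equmomentequationa}--\eqref{equmomentequationbi} in $x$ over $\T^3$. The spatial flux terms vanish by periodicity, leaving a triangular linear ODE system
\begin{equation*}
\frac{d}{dt}P_0^x a = 0,\qquad \frac{d}{dt}P_0^x b_i + \beta P_0^x b_i = 0\ (i=2,3),\qquad \frac{d}{dt}P_0^x b_1 + \beta P_0^x b_1 + \alpha P_0^x b_2 = 0.
\end{equation*}
The initial assumption \eqref{equ:mass,moment,zero,initial} gives $P_0^x a(0) = 0$ and $P_0^x b(0) = 0$, so solving from the bottom up (first $b_2, b_3$, then $b_1$) forces $P_0^x a \equiv P_0^x b \equiv 0$ for all $t\geq 0$.

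For the basic moment bound \eqref{equ:moment estimate,basic}, I would use the Caflisch-type identity $\wt{f} = g_1 + \mu^{1/2}g_2$ (which follows from $\wt{g}=\mu^{-1/2}\wt{f}$ and $\wt{g}=\mu^{-1/2}g_1+g_2$) to split
\begin{equation*}
\babs{\lra{\wt{f},p_n(v)}} \leq \n{w_l g_1}_{L_v^\infty}\int_{\R^3} w_l^{-1}(v)|p_n(v)|\,dv + \n{\mu^{1/2}p_n}_{L_v^2}\n{g_2}_{L_v^2}.
\end{equation*}
Since $w_l^{-1}|p_n|\lesssim (1+|v|)^{n-2l}$ is integrable as soon as $l > (n+3)/2$, which is comfortably implied by $l > n+3$, and the Gaussian factor keeps $\n{\mu^{1/2}p_n}_{L_v^2}$ finite, the desired inequality follows.

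I do not anticipate a conceptual obstacle: the argument is entirely routine test-function calculus plus one Gr\"onwall-free ODE integration. The only place one must be careful is in the derivation of the cubic-moment equations \eqref{equmomentequationB1}--\eqref{equ:moment,equation,B3}, where the expansion $v_2\pa_{v_1}[v_i(|v|^2-5)]$ contains contributions of two different degrees and must be carefully sorted into the moments $B_i$, $b_i$, and $d_{ijk}$; the collision kernel never enters the computation explicitly since the right-hand sides are kept in the abstract form $\lra{\wt{\mathcal{Q}},\varphi}$.
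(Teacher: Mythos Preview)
Your proposal is correct and follows essentially the same approach as the paper: test the perturbation equation \eqref{equ:perturbation equation,wf} against the relevant polynomials to obtain the moment equations, integrate \eqref{equmomentequationa}--\eqref{equmomentequationbi} over $\T^3$ and solve the resulting triangular ODE system from the zero initial data \eqref{equ:mass,moment,zero,initial} to get \eqref{equ:mass,moment,conservation}, and use the splitting $\wt{f}=g_1+\mu^{1/2}g_2$ to deduce \eqref{equ:moment estimate,basic}. The paper omits the detailed derivation of the moment equations, so your schematic identity and the care you flag for the cubic moments $B_i$ in fact go slightly beyond what the paper writes out.
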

\begin{proof}
The evolution equations 
for the moments are obtained by testing equation \eqref{equ:perturbation equation,wf} through direct computation. For conciseness, we omit the detailed derivation. Regarding the conservation laws, we note that
\begin{equation*}
\left\{
\begin{aligned}
\pa_{t}P_{0}^{x}a=&0,\\
\pa_{t}P_{0}^{x}b_{1}+\al P_{0}^{x}b_{1}+\beta P_{0}^{x}b_{2}=&0,\\
\pa_{t}P_{0}^{x}b_{i}+\beta P_{0}^{x}b_{i}=&0,\quad i=2,3.
\end{aligned}
\right.
\end{equation*}
Combined with the initial condition \eqref{equ:mass,moment,zero,initial} that
$P_{0}^{x}a(0)=P_{0}^{x}\mathbf{b}(0)=0$, this establishes \eqref{equ:mass,moment,conservation}.

The moment estimate \eqref{equ:moment estimate,basic} follows from
\begin{align*}
\abs{\lra{\wt{f},p_{n}(v)}}=\abs{\lra{\mu^{\frac{1}{2}}\wt{g},p_{n}(v)}}=
\abs{\lra{g_{1}+\mu^{\frac{1}{2}}g_{2},p_{n}(v)}}\lesssim
\n{w_{l}g_{1}}_{L_{v}^{\infty}}+\n{g_{2}}_{L_{v}^{2}}.
\end{align*}

We then conclude the proof of Lemma \ref{lemma:evolution equation,moments}.
\end{proof}

Building upon the moment equations established in Lemma \ref{lemma:evolution equation,moments}, we now examine the nonlinear terms $\lra{\wt{\mathcal{Q}},v_{i}v_{j}}$. Our analysis relies on the following fundamental result concerning the collision operator's action on quadratic velocity moments.
\begin{lemma}[{\hspace{-0.01em}\cite[Chapter XII]{TM80},\cite[Proposition 4.10]{JNV19}}]\label{lemma:Tij,Wij}
Let $W_{i j}(v)=v_i v_j$, and define
\begin{align*}
T_{i j}(v,v_{*})=:\frac{1}{2} \int_{\mathbb{S}^2} B_{0}(\cos \theta)\lrc{W_{i, j}(v^{\prime})+W_{i, j}(v_{*}^{\prime})-W_{i, j}(v)-W_{i, j}(v_{*})}d \omega ,
\end{align*}
where the after-collision velocity pair $(v',v_{*}')$ is given by \eqref{equ:pre,post,velocity}.
Then it holds that
\begin{align*}
T_{i j}(v,v_{*})=-b_0\left[\left(v-v_*\right)_i\left(v-v_*\right)_j-\frac{\delta_{i j}}{3}\left|v-v_*\right|^2\right]
\end{align*}
with the constant $b_{0}$ given by \eqref{equ:constant,b0}.
\end{lemma}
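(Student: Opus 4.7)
The plan is to verify the identity by direct computation: substitute the explicit post-collision formulas into $W_{ij}(v') + W_{ij}(v_{*}') - W_{ij}(v) - W_{ij}(v_{*})$, and then evaluate the remaining angular integrals over $\mathbb{S}^{2}$ using rotational symmetry. Set $q := v_{*} - v$ and write $\zeta := q\cdot\omega$, so that \eqref{equ:pre,post,velocity} becomes $v' = v + \zeta\omega$ and $v_{*}' = v_{*} - \zeta\omega$. Expanding the four quadratic products $v_{i}v_{j}$, $v'_i v'_j$, $v_{*i}v_{*j}$, $v'_{*i}v'_{*j}$ and simplifying, the terms linear in $v$ and $v_{*}$ cancel against each other, leaving
$$W_{ij}(v') + W_{ij}(v_{*}') - W_{ij}(v) - W_{ij}(v_{*}) = -\zeta\bigl(q_i\omega_j + q_j\omega_i\bigr) + 2\zeta^{2}\omega_i\omega_j.$$
Since $\cos\theta = \hat{q}\cdot\omega$ with $\hat{q} := q/|q|$, inserting this into $T_{ij}$ reduces the task to computing $\int_{\mathbb{S}^{2}} B_{0}(\hat{q}\cdot\omega)(\hat{q}\cdot\omega)\omega_j\, d\omega$ and $\int_{\mathbb{S}^{2}} B_{0}(\hat{q}\cdot\omega)(\hat{q}\cdot\omega)^{2}\omega_i\omega_j\, d\omega$.

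Rotational invariance then pins down the tensorial structure of both integrals. The first must be a scalar multiple of $\hat{q}_j$, and the second a linear combination $A\delta_{ij} + B\hat{q}_i\hat{q}_j$, simply because no other rotationally covariant tensors depending only on $\hat{q}$ exist at the relevant ranks. The scalars are extracted by contracting with $\hat{q}_j$ and with $\delta_{ij}$, after which passing to spherical coordinates with axis $\hat{q}$ (and integrating out the trivial azimuthal variable) reduces everything to one-dimensional moments of the form $2\pi\int_{-1}^{1} B_{0}(z) z^{k}\, dz$ for $k=2,4$.

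Finally, I would assemble the three pieces. The coefficient of $\delta_{ij}|q|^{2}$ collapses to $A = \pi\int_{-1}^{1} B_{0}(z)z^{2}(1 - z^{2})\,dz$, which is exactly $b_{0}/3$ in view of \eqref{equ:constant,b0}; the coefficient of $\hat{q}_i\hat{q}_j|q|^{2}$ combines the remaining two contributions into $-3\pi\int_{-1}^{1} B_{0}(z)z^{2}(1 - z^{2})\,dz = -b_{0}$. Rewriting $|q|^{2}\hat{q}_i\hat{q}_j = q_iq_j = (v - v_{*})_i(v - v_{*})_j$ and $|q|^{2} = |v - v_{*}|^{2}$ then yields the stated formula. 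The main difficulty here is not conceptual but bookkeeping: keeping sign conventions consistent between $q = v_{*}-v$ and the $(v - v_{*})_i(v - v_{*})_j$ of the target identity, and arranging the 1-D moments so that the weight $z^{2}(1-z^{2})$ characterizing $b_{0}$ emerges only after the cancellation between the two $\hat{q}_i\hat{q}_j$ contributions. Since the computation is classical, one may alternatively invoke \cite[Chapter XII]{TM80} and \cite[Proposition 4.10]{JNV19} directly; the above outline simply records why the symmetric trace-free form of the right-hand side is forced by rotational covariance and energy conservation (the latter guarantees $\sum_i T_{ii} = 0$, which is a useful internal check).
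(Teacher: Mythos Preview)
Your direct computation is correct. The paper itself does not prove this lemma at all: it simply states the result and cites \cite[Chapter~XII]{TM80} and \cite[Proposition~4.10]{JNV19}. Your outline supplies the explicit verification that the paper defers to the literature, and the key steps --- the algebraic reduction to $-\zeta(q_i\omega_j+q_j\omega_i)+2\zeta^2\omega_i\omega_j$, the rotational-covariance ansatz for the two angular integrals, and the extraction of the scalar coefficients by tracing and contracting with $\hat q$ --- are all sound. In particular your identification $A=\pi\int_{-1}^1 B_0(z)z^2(1-z^2)\,dz=b_0/3$ and the combination $B-c_1=-3A$ for the $\hat q_i\hat q_j$ coefficient are exactly what is needed to land on the stated trace-free form. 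The remark that $\sum_i T_{ii}=0$ follows from energy conservation is a nice internal check, since $\sum_i W_{ii}(v)=|v|^2$ is a collision invariant.
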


Equipped with Lemma \ref{lemma:Tij,Wij}, we derive explicit expressions for the collision operator's action on velocity moments.
\begin{lemma}\label{lemma:cancellation,vij}
We have the following identities:
\begin{align}
 \blra{Q_{sym}(\wt{f},G),v_{i}v_{j}}=&-2b_{0}\lrc{\lra{\wt{f},v_{i}v_{j}}\lra{G,1}
 +\lra{\wt{f},1}\lra{G,v_{i}v_{j}}},\quad i\neq j,\label{equ:QfG,ij}\\
 \lra{Q_{sym}(\wt{f},G),v_{i}v_{i}}
 =&-2b_{0}\lrc{\bblra{\wt{f},v_{i}v_{i}-\frac{|v|^{2}}{3}}
 \lra{G,1}+\lra{\wt{f},1}\bblra{G,v_{i}v_{i}-\frac{|v|^{2}}{3}}},\label{equ:QfG,ii}\\
\lra{Q(\wt{f},\wt{f}),v_{i}v_{j}}=&-2b_{0}\lrc{\lra{\wt{f},v_{i}v_{j}}
\lra{\wt{f},1}-\lra{\wt{f},v_{i}}\lra{\wt{f},v_{j}}},\quad i\neq j,\label{equ:Qff,ij}\\
\lra{Q(\wt{f},\wt{f}),v_{i}v_{i}}=&-2b_{0}\lrc{\bblra{\wt{f},v_{i}v_{i}-\frac{|v|^{2}}{3}}
 \lra{\wt{f},1}+\lra{\wt{f},v_{i}}^{2}-\frac{1}{3}\sum_{j=1}^{3}\lra{\wt{f},v_{j}}^{2}}.\label{equ:Qff,ii}
\end{align}
Moreover, under the conservation laws \eqref{equ:mass,moment,conservation}, the zero-frequency projections satisfy
\begin{align}
P_{0}^{x} \lra{Q_{sym}(\wt{f},G),v_{1}v_{2}}=&-2b_{0}P_{0}^{x} d_{12},\label{equ:QfG,ij,P0}\\
P_{0}^{x} \lra{Q_{sym}(\wt{f},G),v_{i}v_{i}}=&-2b_{0}\lrc{ P_{0}^{x}d_{ii}-\frac{1}{3}P_{0}^{x}E}.\label{equ:QfG,ii,P0}
\end{align}
\end{lemma}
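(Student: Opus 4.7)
The plan is to combine the standard weak formulation of the Boltzmann collision operator with the explicit quadratic-moment identity provided by Lemma \ref{lemma:Tij,Wij}. For any test function $\phi$, the usual pre/post-collision change of variables followed by the exchange $v\leftrightarrow v_*$ yields
\begin{align*}
\lra{Q(f,f),\phi}&=\tfrac12\iiint B_0\, ff_*\bigl[\phi'+\phi_*'-\phi-\phi_*\bigr] d\omega\, dv_*\, dv,\\
\lra{Q_{sym}(f,g),\phi}&=\tfrac12\iiint B_0\bigl[fg_*+gf_*\bigr]\bigl[\phi'+\phi_*'-\phi-\phi_*\bigr] d\omega\, dv_*\, dv.
\end{align*}
Taking $\phi=W_{ij}(v)=v_iv_j$ and recognizing the inner $\omega$-integral as $T_{ij}(v,v_*)$ by definition, and using the joint symmetry of $T_{ij}$ and the Lebesgue measure in $(v,v_*)$, these identities collapse to
\begin{align*}
\lra{Q(f,f),v_iv_j}=\iint ff_*\,T_{ij}(v,v_*)\,dv_*dv,\qquad
\lra{Q_{sym}(f,g),v_iv_j}=2\iint fg_*\,T_{ij}(v,v_*)\,dv_*dv.
\end{align*}

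Next I would insert the explicit formula $T_{ij}(v,v_*)=-b_0\bigl[(v-v_*)_i(v-v_*)_j-\tfrac{\delta_{ij}}{3}|v-v_*|^2\bigr]$ from Lemma \ref{lemma:Tij,Wij} and expand each factor as a sum of pure-$v$, pure-$v_*$, and genuinely bilinear mixed terms. In the off-diagonal case $i\neq j$, the pure terms produce $\lra{f,v_iv_j}\lra{g,1}+\lra{f,1}\lra{g,v_iv_j}$, while the mixed terms give $\lra{f,v_i}\lra{g,v_j}+\lra{f,v_j}\lra{g,v_i}$. Choosing $(f,g)=(\wt f,G)$ and invoking the normalizations $\lra{G,1}=1$ and $\lra{G,v_i}=0$ from \eqref{equ:steady profile,G} annihilates the mixed terms and yields \eqref{equ:QfG,ij}. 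For $(f,g)=(\wt f,\wt f)$ no such cancellation occurs, and \eqref{equ:Qff,ij} follows after combining the two symmetric mixed contributions. The diagonal case $i=j$ is treated identically, using the decomposition
\[(v-v_*)_i^2-\tfrac13|v-v_*|^2=\bigl(v_i^2-\tfrac13|v|^2\bigr)+\bigl(v_{*i}^2-\tfrac13|v_*|^2\bigr)-2v_iv_{*i}+\tfrac23 v\cdot v_*,\]
which produces the traceless pure contributions $\lra{\cdot,v_i^2-|v|^2/3}$ plus a bilinear remainder; for $Q_{sym}(\wt f,G)$ this remainder vanishes by $\lra{G,v_i}=0$, giving \eqref{equ:QfG,ii}, while for $Q(\wt f,\wt f)$ it survives as the characteristic null-type combination appearing in \eqref{equ:Qff,ii}.

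Finally, to obtain the zero-frequency identities \eqref{equ:QfG,ij,P0}--\eqref{equ:QfG,ii,P0}, I would apply the spatial projector $P_0^x$ to \eqref{equ:QfG,ij} and \eqref{equ:QfG,ii}. Since $\lra{G,v_iv_j}$ and $\lra{G,v_i^2-|v|^2/3}$ are spatial constants, one has $P_0^x\bigl[\lra{\wt f,1}\lra{G,\cdot}\bigr]=\lra{G,\cdot}\,P_0^x a$, which vanishes by the mass conservation $P_0^x a\equiv 0$ from \eqref{equ:mass,moment,conservation}. The only surviving contribution is $-2b_0 P_0^x\lra{\wt f,v_1v_2}=-2b_0 P_0^x d_{12}$ in the off-diagonal case, and $-2b_0 P_0^x\lra{\wt f,v_i^2-|v|^2/3}=-2b_0[P_0^x d_{ii}-\tfrac13 P_0^x E]$ in the diagonal case, exactly as claimed. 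The whole argument is elementary polynomial bookkeeping; the only mild care points are tracking the symmetrization factor $2$ distinguishing $Q_{sym}(\wt f,G)$ from $Q(\wt f,\wt f)$ and arranging the $i=j$ expansion so that the traceless combination $v_i^2-|v|^2/3$ emerges naturally.
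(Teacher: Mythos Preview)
Your proposal is correct and follows essentially the same route as the paper: the paper likewise starts from the weak formulation of $Q$ and $Q_{sym}$, identifies the inner $\omega$-integral as $T_{ij}(v,v_*)$ via Lemma~\ref{lemma:Tij,Wij}, expands $(v-v_*)_i(v-v_*)_j-\tfrac{\delta_{ij}}{3}|v-v_*|^2$ into pure and mixed terms, and then kills the mixed terms in the $Q_{sym}(\wt f,G)$ case using $\lra{G,v_i}=0$. The zero-frequency identities are obtained exactly as you describe, by applying $P_0^x$ and invoking $P_0^x a=0$ together with $\lra{G,1}=1$.
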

\begin{proof}
By the symmetry and collision-invariant property, we have
\begin{align*}
\lra{Q_{sym}(\psi_{1},\psi_{2}),\phi}
=&\int_{\R^{3}\times \R^{3} \times \mathbb{S}^{2}}B_{0}(\cos \theta) \psi_{1}\psi_{2,*}(\phi'+\phi'_{*}-\phi-\phi_{*}) d \omega d vd v_* .
\end{align*}
Taking $\phi(v)=W_{ij}(v)$ and using Lemma \ref{lemma:Tij,Wij}, we get
\begin{align}\label{equ:QfG,ij,proof}
&\lra{Q_{sym}(\wt{f},G),v_{i}v_{j}}\\
=&\int_{\R^{3}\times \R^{3}}\wt{f}(v)G(v_{*})\lrc{\int_{\mathbb{S}^2} B_{0}(\cos \theta)\lrs{W_{i, j}(v^{\prime})+W_{i, j}(v_{*}^{\prime})-W_{i, j}(v)-W_{i, j}(v_{*})} d\omega} d vd v_* \notag\\
=&2
\int_{\R^{3}\times \R^{3}}\wt{f}(v)G(v_{*})T_{ij}(v,v_{*})dvdv_{*}\notag\\
=&-2b_0\int_{\R^{3}\times \R^{3}}\wt{f}(v)G(v_{*})\left[\left(v-v_*\right)_i\left(v-v_*\right)_j-\frac{\delta_{i j}}{3}\left|v-v_*\right|^2\right]dvdv_{*}.\notag
\end{align}
Expanding the velocity terms
\begin{equation}\label{equ:vij,expansion}
\left\{
\begin{aligned}
(v-v_*)_{i}(v-v_*)_{j}=&v_{i}v_{j}+v_{*,i}v_{*,j}-v_{i}v_{*,j}-v_{j}v_{*,i}, \\
(v-v_{*})_{i}(v-v_{*})_{i}-\frac{|v-v_{*}|^{2}}{3}=&v_{i}v_{i}+v_{*,i}v_{*,i}-2v_{i}v_{*,i}
-\frac{|v|^{2}+|v_{*}|^{2}-2v\cdot v_{*}}{3},
\end{aligned}
\right.
\end{equation}
with $\lra{G,v_{i}}=0$ for $i=1,2,3$, we obtain
\begin{align*}
\lra{Q_{sym}(\wt{f},G),v_{i}v_{j}}=&-2b_{0}\lrc{\lra{\wt{f},v_{i}v_{j}}\lra{G,1}
 +\lra{\wt{f},1}\lra{G,v_{i}v_{j}}},\quad i\neq j,\\
 \lra{Q_{sym}(\wt{f},G),v_{i}v_{i}}
 =&-2b_{0}\lrc{\bblra{\wt{f},v_{i}v_{i}-\frac{|v|^{2}}{3}}
 \lra{G,1}
 +\lra{\wt{f},1}\bblra{G,v_{i}v_{i}-\frac{|v|^{2}}{3}}}.
\end{align*}
Therefore, we complete the proof of \eqref{equ:QfG,ij}--\eqref{equ:QfG,ii}.

Proceeding in a manner analogous to \eqref{equ:QfG,ij,proof}, we derive
\begin{align*}
\lra{Q(\wt{f},\wt{f}),v_{i}v_{j}}
=&-b_0\int_{\R^{3}\times \R^{3}}\wt{f}(v)\wt{f}(v_{*})\left[\left(v-v_*\right)_i\left(v-v_*\right)_j-\frac{\delta_{i j}}{3}\left|v-v_*\right|^2\right]dvdv_{*}.
\end{align*}
Utilizing the expansion formula \eqref{equ:vij,expansion}, we derive
\begin{align*}
\lra{Q(\wt{f},\wt{f}),v_{i}v_{j}}=&-2b_{0}\lrc{\lra{\wt{f},v_{i}v_{j}}
\lra{\wt{f},1}-\lra{\wt{f},v_{i}}\lra{\wt{f},v_{j}}},\quad i\neq j,\\
\lra{Q(\wt{f},\wt{f}),v_{i}v_{i}}=&-2b_{0}\lrc{\bblra{\wt{f},v_{i}v_{i}-\frac{|v|^{2}}{3}}
 \lra{\wt{f},1}+\lra{\wt{f},v_{i}}^{2}-\frac{1}{3}\sum_{j=1}^{3}\lra{\wt{f},v_{j}}^{2}},
\end{align*}
which completes the proof of \eqref{equ:Qff,ij}--\eqref{equ:Qff,ii}.

For \eqref{equ:QfG,ij,P0}--\eqref{equ:QfG,ii,P0}, using equations \eqref{equ:QfG,ij}--\eqref{equ:QfG,ii} and the conservation law \eqref{equ:mass,moment,conservation}, with the condition $\lra{G,1}=1$, we have
\begin{align*}
P_{0}^{x}\blra{Q_{sym}(\wt{f},G),v_{i}v_{j}}=
-2b_{0}P_{0}^{x}\lrc{\lra{\wt{f},v_{i}v_{j}}\lra{G,1}
 +\lra{\wt{f},1}\lra{G,v_{i}v_{j}}}=-2b_{0}P_{0}^{x}d_{ij},\ i\neq j,
\end{align*}
and
\begin{align*}
P_{0}^{x} \lra{Q_{sym}(\wt{f},G),v_{i}v_{i}}
 =&-2b_{0}P_{0}^{x}\lrc{  \bblra{\wt{f},v_{i}v_{i}-\frac{|v|^{2}}{3}}
 \lra{G,1}
 +\lra{\wt{f},1}\bblra{G,v_{i}v_{i}-\frac{|v|^{2}}{3}}}\\
 =&-2b_{0}\lrc{ P_{0}^{x}d_{ii}-\frac{1}{3}P_{0}^{x}E}.
\end{align*}
Hence, we have completed the proof of \eqref{equ:QfG,ij,P0}--\eqref{equ:QfG,ii,P0}.
\end{proof}

As an application of Lemmas \ref{lemma:evolution equation,moments} and \ref{lemma:cancellation,vij},we derive the evolution equation regarding the second-order moment by considering the zero-frequency mode
\begin{align}\label{equ:moment equation,second}
\frac{d}{dt}
\begin{pmatrix}
P_{0}^{x}E\\
P_{0}^{x}d_{12}\\
P_{0}^{x}d_{22}
\end{pmatrix}+\lrc{2\be I+
\begin{pmatrix}
0&2\al &0\\
0&2b_{0} & \al \\
-\frac{2b_{0}}{3}&0&2b_{0}
\end{pmatrix}}
\begin{pmatrix}
P_{0}^{x}E\\
P_{0}^{x}d_{12}\\
P_{0}^{x}d_{22}\\
\end{pmatrix}=
\begin{pmatrix}
0\\
P_{0}^{x}\lra{Q(\wt{f},\wt{f}),v_{1}v_{2}}\\
P_{0}^{x}\lra{Q(\wt{f},\wt{f}),v_{2}v_{2}}
\end{pmatrix}.
\end{align}
For notational convenience, we introduce
\begin{align}\label{equ:notation,U,A,R}
U=\begin{pmatrix}
P_{0}^{x}E\\
P_{0}^{x}d_{12}\\
P_{0}^{x}d_{22}
\end{pmatrix},\quad
A_{\al}=2\be I+
\begin{pmatrix}
0&2\al &0\\
0&2b_{0} & \al \\
-\frac{2b_{0}}{3}&0&2b_{0}
\end{pmatrix},\quad
R=\begin{pmatrix}
0\\
P_{0}^{x}\lra{Q(\wt{f},\wt{f}),v_{1}v_{2}}\\
P_{0}^{x}\lra{Q(\wt{f},\wt{f}),v_{2}v_{2}}
\end{pmatrix}.
\end{align}

We are now prepared to provide estimates for the second-order moment.
\begin{lemma}\label{lemma:L2,c,estimate}
Let $U(t)$ satisfy the equation \eqref{equ:moment equation,second}. Then we have
\begin{align}\label{equ:L2,U,estimate}
\sup_{s\in[0,t]}\n{U(s)}_{L_{x}^{2}}\leq \n{U(0)}_{L_{x}^{2}}+\n{g_{1}}_{\ol{X}_{1,t}}^{2}+\lrs{\sup_{s\in[0,t]}e^{\la_{0} s}\n{\pa_{x}g_{2}(s)}_{L_{x,v}^{2}}}^{2}.
\end{align}
In particular, we obtain
\begin{align}\label{equ:L2,c,estimate}
\sup_{s\in[0,t]}\n{c(s)}_{L_{x}^{2}}\lesssim & \n{w_{l}\wt{f}_{0}}_{L_{x,v}^{\infty}}+\n{g_{1}}_{\ol{X}_{1,t}}+\sup_{s\in[0,t]} e^{\la_{0}s}\n{\pa_{x}g_{2}(s)}_{L_{x}^{2}}\\
&+\n{g_{1}}_{\ol{X}_{1,t}}^{2}+\lrs{\sup_{s\in[0,t]}e^{\la_{0} s}\n{\pa_{x}g_{2}(s)}_{L_{x,v}^{2}}}^{2}.\notag
\end{align}
\end{lemma}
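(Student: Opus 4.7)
The starting point is to regard \eqref{equ:moment equation,second} as the linear ODE system $\frac{d}{dt}U+A_\al U=R$ with $A_\al, U, R$ from \eqref{equ:notation,U,A,R}. Duhamel's formula gives
\[
U(t)=e^{-A_\al t}U(0)+\int_0^t e^{-A_\al (t-s)}R(s)\,ds,
\]
so the argument reduces to two independent ingredients: a uniform operator-norm bound for the matrix semigroup $e^{-A_\al t}$, and an integrable quadratic pointwise bound for $|R(s)|$.

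For the semigroup, I perform a perturbative spectral analysis of $A_\al$. Expanding the determinant along the first row, the characteristic polynomial is
\[
(2\be-\la)(2\be+2b_0-\la)^2=\frac{4\al^2 b_0}{3}.
\]
At $\al=0$ one has $\be=0$ and eigenvalues $\{0,2b_0,2b_0\}$. For small $\al>0$, the asymptotic expansion \eqref{equ:G,profile,1,order}--\eqref{equ:G,profile,velocity} combined with \eqref{equ:beta,alpha} yields $\be=O(\al^2)$ with the correct sign; the eigenvalue near $0$ then perturbs to $\la\simeq \al^2/b_0>0$, while the double eigenvalue at $2b_0$ splits into a complex conjugate pair with real part $2\be+2b_0>0$. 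Hence $\mathrm{Spec}(A_\al)$ lies strictly in the open right half-plane for all sufficiently small $\al$, and $\n{e^{-A_\al t}}_{\mathrm{op}}\lesssim 1$ uniformly for $t\ge 0$.

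For the source $R(s)$, the null structure provided by Lemma \ref{lemma:cancellation,vij} is decisive. The identities \eqref{equ:Qff,ij}--\eqref{equ:Qff,ii}, combined with the conservation laws \eqref{equ:mass,moment,conservation} which force $\lra{\wt f,1}=P_{\neq 0}^x\lra{\wt f,1}$ and $\lra{\wt f,v_i}=P_{\neq 0}^x\lra{\wt f,v_i}$, rewrite both components of $R$ as $P_0^x$ applied to products of two $P_{\neq 0}^x$ hydrodynamic moments, exactly the null cancellation highlighted in the introduction. By Cauchy--Schwarz on $\T^3$, the Poincaré inequality $\n{P_{\neq 0}^x u}_{L_{x}^{2}}\lesssim\n{\pa_x u}_{L_{x}^{2}}$, and the moment bound \eqref{equ:moment estimate,basic} applied to $\pa_x\wt f=\pa_x g_1+\sqrt{\mu}\,\pa_x g_2$, each such quadratic term is controlled by $(\n{w_l\pa_x g_1}_{L_{x,v}^\infty}+\n{\pa_x g_2}_{L_{x,v}^2})^2$. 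Factoring out the $e^{\la_0 s}$ weights from the $\ol X_{1,t}$ and $L^{2}$ norms produces a factor $e^{-2\la_0 s}$, whose time integral is finite. Combining with the semigroup bound and $\n{U(0)}_{L_{x}^{2}}\lesssim \n{w_l\wt f_0}_{L_{x,v}^\infty}$ (which follows from $g_2(0)=0$ and \eqref{equ:moment estimate,basic}) then yields \eqref{equ:L2,U,estimate}.

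Finally, \eqref{equ:L2,c,estimate} is obtained by splitting $c=P_0^x c+P_{\neq 0}^x c$. Since $c=\frac{1}{\sqrt 6}(E-3a)$ and $P_0^x a\equiv 0$ by mass conservation, the zero-frequency contribution reduces to $|P_0^x c|=\tfrac{1}{\sqrt 6}|P_0^x E|\le\tfrac{1}{\sqrt 6}\n{U}_{L_{x}^{2}}$, already controlled by \eqref{equ:L2,U,estimate}. For the non-zero-frequency part, Poincaré together with \eqref{equ:moment estimate,basic} gives $\n{P_{\neq 0}^x c}_{L_{x}^{2}}\lesssim \n{\pa_x c}_{L_{x}^{2}}\lesssim \n{w_l\pa_x g_1}_{L_{x,v}^\infty}+\n{\pa_x g_2}_{L_{x,v}^2}$, whose supremum in $s$ contributes the linear terms $\n{g_1}_{\ol X_{1,t}}+\sup_s e^{\la_0 s}\n{\pa_x g_2}_{L_{x,v}^2}$. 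The most delicate point is the near-degeneracy of the slow eigenvalue of $A_\al$, of size $O(\al^2)$: a naive Gronwall estimate on $U$ ignoring the structure of $R$ would generate a damaging factor $\al^{-2}$ and be useless for small shear. The null cancellation is precisely what exposes $R$ as a quadratic source in non-zero-frequency quantities carrying the full $e^{-\la_0 s}$ decay, analogous to the null condition for nonlinear wave equations, so that the Duhamel integral stays bounded uniformly in $\al$.
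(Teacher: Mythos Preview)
Your approach mirrors the paper's: Duhamel for $\dot U+A_\al U=R$, spectral control of $A_\al$ to bound the semigroup, the null-structure identities from Lemma~\ref{lemma:cancellation,vij} together with \eqref{equ:mass,moment,conservation} to rewrite $R$ as $P_0^x$ of products of $P_{\neq 0}^x$ moments, and then Cauchy--Schwarz, Poincar\'e and \eqref{equ:moment estimate,basic} to extract the $e^{-2\la_0 s}$ decay. The deduction of \eqref{equ:L2,c,estimate} via $c=\tfrac{1}{\sqrt 6}(E-3a)$, $P_0^x a=0$, and Poincar\'e on $P_{\neq 0}^x c$ is likewise the same.

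One point needs correction. You claim the small eigenvalue of $A_\al$ perturbs to $\la\simeq \al^2/b_0>0$, placing $\mathrm{Spec}(A_\al)$ strictly in the open right half-plane. In fact $\la_1=0$ is an \emph{exact} eigenvalue for every small $\al$: the moment vector $(\lra{G,|v|^2},\lra{G,v_1v_2},\lra{G,v_2v_2})^T$ of the steady profile lies in $\ker A_\al$ (this is precisely the steady equation \eqref{equ:steady profile,G} read at second moments), and the resulting identity $p(0)=2\be(2b_0+2\be)^2-\tfrac{4b_0\al^2}{3}=0$ is what determines $\be(\al)=\tfrac{\al^2}{6b_0}+O(\al^4)$. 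The remaining eigenvalues are $\la_{2,3}=2b_0+3\be\pm i\sqrt{b_0\be+\tfrac34\be^2}$. This does not invalidate the semigroup bound---a simple zero eigenvalue still gives $\n{e^{-A_\al t}}_{\mathrm{op}}\lesssim 1$---but the paper goes further and explicitly computes the eigenvector matrix $Q_\al$ and checks $\n{Q_\al}\,\n{Q_\al^{-1}}\lesssim 1$ uniformly in $\al$, which is what actually secures the $\al$-independent constant; a bare location of the spectrum does not by itself rule out ill-conditioned diagonalization as $\al\to 0$. The exact zero eigenvalue is also structurally important later: it is what produces the nontrivial long-time limit $U(\infty)$ in Section~\ref{sec:Global Stability}.
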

\begin{proof}
By Duhamel's formula, we rewrite
\begin{align*}
U(t)=e^{-A_{\al}t}U(0)+\int_{0}^{t}e^{-(t-s)A_{\al}}R(s)ds.
\end{align*}
We get into the analysis of the matrix $A_{\al}$. We compute the characteristic polynomial
\begin{align*}
\operatorname{det}(A_{\al}-\la)=:p(\la)=(2b_{0}+2\beta-\la)^{2}(2\be-\la)-\frac{4b_{0}\al^{2}}{3}.
\end{align*}
By \eqref{equ:Qff,ij}--\eqref{equ:Qff,ii} in Lemma \ref{lemma:cancellation,vij} with  $\wt{f}$ replaced by $G$, recalling that $\lra{G,1}=1$ and $\lra{G,v_{i}}=0$, we get
\begin{align*}
\lra{Q(G,G),v_{1}v_{2}}=-2b_{0}\lrc{\lra{G,v_{1}v_{2}}
\lra{G,1}-\lra{G,v_{1}}\lra{G,v_{2}}}=-2b_{0}\lra{G,v_{1}v_{2}},
\end{align*}
and
\begin{align*}
\lra{Q(G,G),v_{2}v_{2}}=&-2b_{0}\lrc{\bblra{G,v_{2}v_{2}-\frac{|v|^{2}}{3}}
 \lra{G,1}+\lra{G,v_{2}}^{2}-\frac{1}{3}\sum_{i=1}^{3}\lra{G,v_{i}}^{2}}\\
 =&-2b_{0}\lrs{ \lra{G,v_{2}v_{2}}-\bblra{G,\frac{|v|^{2}}{3}}}.
\end{align*}
Therefore, by the steady equation \eqref{equ:steady profile,G} for $G$, we have
\begin{align*}
A_{\al}
\begin{pmatrix}
\lra{G,|v|^{2}}\\
\lra{G,v_{1}v_{2}}\\
\lra{G,v_{2}v_{2}}
\end{pmatrix}=
\lrc{2\be I+
\begin{pmatrix}
0&2\al &0\\
0&2b_{0} & \al \\
-\frac{2b_{0}}{3}&0&2b_{0}
\end{pmatrix}}
\begin{pmatrix}
\lra{G,|v|^{2}}\\
\lra{G,v_{1}v_{2}}\\
\lra{G,v_{2}v_{2}}
\end{pmatrix}
=
\begin{pmatrix}
0\\
0\\
0
\end{pmatrix}.
\end{align*}
Noting that $\lra{|v|^{2},G}=3$, this implies that $\la_{1}=0$ is one of the eigenvalues of the matrix $A_{\al}$, that is,
\begin{align*}
p(0)=2\be(2b_{0}+2\beta)^{2}-\frac{4b_{0}\al^{2}}{3}=0,
\end{align*}
which yields the asymptotic behavior
\begin{align*}
\beta(\al)=\frac{\al^{2}}{6b_{0}}+O(\al^{4}).
\end{align*}
Thus, we can rewrite the characteristic polynomial as
\begin{align*}
p(\la)=-\la^{3}+2\la^{2}(2b_{0}+3\be)-4\la(b_{0}+\beta)(b_{0}+3\beta).
\end{align*}

A direct calculation shows that the other two eigenvalues are explicitly given by
\begin{equation*}
\left\{
\begin{aligned}
\la_{2}=&2b_{0}+3\beta+i\sqrt{b_{0}\beta+\frac{3}{4}\beta^{2}}= 2b_{0}+i\frac{\sqrt{6}\al}{6}+O(\al^{2}),\\
\la_{3}=&2b_{0}+3\beta-i\sqrt{b_{0}\beta+\frac{3}{4}\beta^{2}}= 2b_{0}-i\frac{\sqrt{6}\al}{6}+O(\al^{2}).
\end{aligned}
\right.
\end{equation*}
Consequently, the eigenvector matrix takes the form
\begin{equation*}
Q_{\al}=[q_{1},q_{2},q_{3}]=\begin{pmatrix}
\frac{\al^{2}}{\beta}& -\frac{2\al}{2\beta-\la_{2}}&-\frac{2\al}{2\beta-\la_{3}}\\
-\al& 1& 1\\
2b_{0}+2\beta& -\frac{2\beta+2b_{0}-\la_{2}}{\al}&  -\frac{2\beta+2b_{0}-\la_{3}}{\al}
\end{pmatrix},
\end{equation*}
with the asymptotic expansion
\begin{align*}
Q_{\al}=
\begin{pmatrix}
6b_{0}& 0& 0\\
0& 1 &1\\
2b_{0}& -i\frac{\sqrt{6}}{6}& i\frac{\sqrt{6}}{6}
\end{pmatrix}
+O(\al)=:Q_{0}+O(\al).
\end{align*}
Moreover, we have the diagonalization
\begin{align*}
Q_{\al}^{-1}A_{\al}Q_{\al}=\operatorname{Diag}(A_{\al})=
\begin{pmatrix}
0&&\\
&\la_{2}&\\
&&\la_{3}
\end{pmatrix}.
\end{align*}

We now derive a linear estimate for the semigroup $e^{-tA_{\al}}$. For $W=(w_{1},w_{2},w_{3})\in \R^{3}$, we have
\begin{align*}
\abs{e^{-A_{\al}t}W}=&\abs{Q_{\al}Q_{\al}^{-1}e^{-A_{\al}t}Q_{\al}Q_{\al}^{-1}W}=
\n{Q_{\al}e^{-\operatorname{Diag}(A_{\al})t}Q_{\al}^{-1}W}\\
\leq&\n{Q_{\al}}\n{Q_{\al}^{-1}}\abs{W}\lesssim \abs{W},
\end{align*}
where the last inequality follows from
\begin{align*}
\n{Q_{\al}}\lesssim 1,\quad \n{Q_{\al}^{-1}}\lesssim |\operatorname{det}Q_{\al}|^{-1}\lesssim |\operatorname{det}Q_{0}|^{-1}\lesssim 1.
\end{align*}
Then we get
\begin{align*}
\n{U(t)}_{L_{x}^{2}}\lesssim \n{U(0)}_{L_{x}^{2}}+\int_{0}^{t}\n{R(s)}_{L_{x}^{2}}ds,
\end{align*}
which yields that
\begin{align}\label{equ:estimate,U,duhamel,R}
\sup_{s\in[0,t]}\n{U(s)}_{L_{x}^{2}}\lesssim \n{U(0)}_{L_{x}^{2}}+\sup_{s\in[0,t]}e^{2\la_{0}s}\n{R(s)}_{L_{x}^{2}}.
\end{align}

Next, we get into the analysis of the nonlinear term $R(s)$ as given by \eqref{equ:notation,U,A,R}.
A direct approach, using the bilinear estimates \eqref{equ:loss term,estimate,f,g}--\eqref{equ:gain term,estimate,f,g} and the basic moment estimate \eqref{equ:moment estimate,basic} to bound
$R(s)$ in \eqref{equ:estimate,U,duhamel,R}, fails to provide decay, owing to the absence of decay estimates for the energy moment $E$.
However, a null structure identified in Lemma \ref{lemma:cancellation,vij} becomes crucial here.
Specifically, the mass and momentum conservation laws
\begin{align*}
P_{0}^{x}\lra{\wt{f},1}=P_{0}^{x}\lra{\wt{f},v_{i}}=0,\quad i=1,2,3,
\end{align*}
together with identities \eqref{equ:Qff,ij}--\eqref{equ:Qff,ii} in Lemma \ref{lemma:cancellation,vij}, imply the vanishing property of the zero-frequency mode in the nonlinear collision term. That is,
\begin{align*}
&P_{0}^{x}\lra{Q(\wt{f},\wt{f}),v_{1}v_{2}}\\
=&-2b_{0}P_{0}^{x}\lrc{\lra{\wt{f},v_{1}v_{2}}
\lra{\wt{f},1}-\lra{\wt{f},v_{1}}\lra{\wt{f},v_{2}}}\\
=&-2b_{0}P_{0}^{x}\lrc{\lrs{P_{\neq 0}^{x}\lra{\wt{f},v_{1}v_{2}}}
\lrs{P_{\neq 0}^{x}\lra{\wt{f},1}}-\lrs{P_{\neq 0}^{x}\lra{\wt{f},v_{1}}}
\lrs{P_{\neq 0}^{x}\lra{\wt{f},v_{2}}}},
\end{align*}
and
\begin{align*}
&P_{0}^{x}\lra{Q(\wt{f},\wt{f}),v_{2}v_{2}}\\
=&-2b_{0}P_{0}^{x}\lrc{\bblra{\wt{f},v_{2}v_{2}-\frac{|v|^{2}}{3}}
 \lra{\wt{f},1}+\lra{\wt{f},v_{2}}^{2}-\frac{1}{3}\sum_{i=1}^{3}\lra{\wt{f},v_{i}}^{2}}\\
 =&-2b_{0}P_{0}^{x}\lrc{\lrs{ P_{\neq 0}^{x}\bblra{\wt{f},v_{2}v_{2}-\frac{|v|^{2}}{3}}}
 \lrs{P_{\neq 0}^{x}\lra{\wt{f},1}}+\lrs{P_{\neq 0}^{x}\lra{\wt{f},v_{2}}}^{2}-\frac{1}{3}\sum_{i=1}^{3}\lrs{P_{\neq 0}^{x}\lra{\wt{f},v_{i}}}^{2}}.
\end{align*}
Then by Cauchy-Schwarz inequality, we have
\begin{align}\label{equ:estimate,duhamel,R123}
\sup_{s\in[0,t]}e^{2\la_{0}s}\n{R(s)}_{L_{x}^{2}}\lesssim R_{1}+R_{2}+R_{3},
\end{align}
where
\begin{align*}
R_{1}=&\sup_{s\in [0,t]}e^{\la_{0}s}\n{P_{\neq 0}^{x}
\lra{\wt{f}(s),v_{i}v_{j}}}_{L_{x}^{2}}
\sup_{s\in [0,t]}e^{\la_{0}s}\n{P_{\neq 0}^{x}
\lra{\wt{f}(s),1}}_{L_{x}^{2}},\\
R_{2}=&\sup_{s\in [0,t]}e^{\la_{0}s}\n{P_{\neq 0}^{x}
\lra{\wt{f}(s),|v|^{2}}}_{L_{x}^{2}}
\sup_{s\in [0,t]}e^{\la_{0}s}\n{P_{\neq 0}^{x}
\lra{\wt{f}(s),1}}_{L_{x}^{2}},\\
R_{3}=&\sup_{s\in [0,t]}e^{\la_{0}s}\n{P_{\neq 0}^{x}
\lra{\wt{f}(s),v}}_{L_{x}^{2}}
\sup_{s\in [0,t]}e^{\la_{0}s}\n{P_{\neq 0}^{x}
\lra{\wt{f}(s),v}}_{L_{x}^{2}}.
\end{align*}
By the Poincar\'e inequality and the moment estimate \eqref{equ:moment estimate,basic}, we get
\begin{align*}
\n{P_{\neq 0}^{x}\lra{\wt{f},p_{n}(v)}}_{L_{x}^{2}}\lesssim
\n{\lra{\pa_{x}\wt{f},p_{n}(v)}}_{L_{x}^{2}}\lesssim
 \n{w_{l}\pa_{x}g_{1}}_{L_{x,v}^{\infty}}+\n{\pa_{x}g_{2}}_{L_{x,v}^{2}},
\end{align*}
and hence obtain
\begin{align}\label{equ:estimate,Ri}
R_{i}\lesssim \lrs{\sup_{s\in [0,t]}e^{\la_{0}s}\n{w_{l}\pa_{x}g_{1}(s)}_{L_{x,v}^{\infty}}+\sup_{s\in [0,t]}e^{\la_{0}s}\n{\pa_{x}g_{2}(s)}_{L_{x,v}^{2}}}^{2},\quad i=1,2,3.
\end{align}
Putting together estimates \eqref{equ:estimate,U,duhamel,R}, \eqref{equ:estimate,duhamel,R123}, and \eqref{equ:estimate,Ri}, we arrive at the desired estimate \eqref{equ:L2,U,estimate}.

For \eqref{equ:L2,c,estimate}, by the triangle inequality, Poincar\'e inequality, and the moment estimate \eqref{equ:moment estimate,basic}, we get
\begin{align*}
\n{c}_{L_{x}^{2}}\lesssim& \n{P_{0}^{x}E}_{L_{x}^{2}}+\n{a}_{L_{x}^{2}}\lesssim \n{E}_{L_{x}^{2}}+\n{\pa_{x}a}_{L_{x}^{2}}\\
\lesssim&\n{U}_{L_{x}^{2}}+\n{w_{l}\pa_{x}g_{1}}_{L_{x,v}^{\infty}}+\n{\pa_{x}g_{2}}_{L_{x,v}^{2}},
\end{align*}
which, together with \eqref{equ:L2,U,estimate}, gives that
\begin{align*}
\sup_{s\in[0,t]}\n{c(s)}_{L_{x}^{2}}\lesssim & \n{w_{l}\wt{f}_{0}}_{L_{x,v}^{\infty}}+\n{g_{1}}_{\ol{X}_{1,t}}^{2}+\lrs{\sup_{s\in[0,t]}e^{\la_{0} s}\n{\pa_{x}g_{2}(s)}_{L_{x,v}^{2}}}^{2}\\
&+ \n{g_{1}}_{\ol{X}_{1,t}}+\sup_{s\in[0,t]} e^{\la_{0}s}\n{\pa_{x}g_{2}(s)}_{L_{x}^{2}}.
\end{align*}
This completes the proof of \eqref{equ:L2,c,estimate}.
\end{proof}

In summary, combining Lemma \ref{lemma:g2,L2,estimate,p0,p1} and Lemma \ref{lemma:L2,c,estimate}, we arrive at the following $L_{x,v}^{2}$ energy estimates for $g_{2}$.

\begin{proposition}\label{lemma:g2,L2,estimate}
Let $[g_{1},g_{2}]$ be the solution to \eqref{equ:g1,equation}--\eqref{equ:g2,equation}. Then we have
\begin{align}\label{equ:g2,L2,estimate,full}
\sup_{s\in[0,t]}\n{g_{2}(s)}_{L_{x,v}^{2}}
\lesssim&
 \sum_{k=0,1}\n{w_{l}\pa_{x}^{k}\wt{f}_{0}}_{L_{x,v}^{\infty}}+ \al \sum_{k=0,1}\n{g_{2}}_{\ol{X}_{k,t}}+\sum_{k=0,1}\n{[g_{1},g_{2}]}_{\ol{X}_{k,t}}^{2}
\\
 &+
 \sup_{s\in[0,t]} e^{\la_{0} s} \n{\pa_{x}g_{2}(s)}_{L_{x,v}^{2}}+\lrs{\sup_{s\in[0,t]}e^{\la_{0} s}\n{\pa_{x}g_{2}(s)}_{L_{x,v}^{2}}}^{2}.\notag
\end{align}

\end{proposition}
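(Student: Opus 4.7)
The plan is to combine the three main lemmas proved so far, namely the weighted $L^\infty$ estimate in Lemma \ref{lemma,Linf,estimate}, the lower-order energy estimate \eqref{equ:g2,L2,estimate} from Lemma \ref{lemma:g2,L2,estimate,p0,p1}, and the zero-frequency second moment estimate \eqref{equ:L2,c,estimate} from Lemma \ref{lemma:L2,c,estimate}. The proposition is essentially a chaining result: the only energy coefficient that lacks a direct dissipative control is the thermal component $c(t,x)$, and Lemma \ref{lemma:L2,c,estimate} is precisely designed to estimate it.

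First I would invoke estimate \eqref{equ:g2,L2,estimate} to write
\begin{align*}
\sup_{s\in[0,t]}\n{g_{2}(s)}_{L_{x,v}^{2}} \lesssim \sup_{s\in[0,t]} \n{c(s)}_{L_{x}^{2}} + \sum_{k=0,1} \n{g_{1}}_{\ol{X}_{k,t}} + \sup_{s\in[0,t]} e^{\la_{0}s}\n{\pa_{x}g_{2}(s)}_{L_{x,v}^{2}}.
\end{align*}
Into the right-hand side I would substitute \eqref{equ:L2,c,estimate}, which provides
\begin{align*}
\sup_{s\in[0,t]}\n{c(s)}_{L_{x}^{2}}\lesssim \n{w_{l}\wt{f}_{0}}_{L_{x,v}^{\infty}} + \n{g_{1}}_{\ol{X}_{1,t}} + \sup_{s\in[0,t]} e^{\la_{0}s}\n{\pa_{x}g_{2}(s)}_{L_{x,v}^{2}} + \n{g_{1}}_{\ol{X}_{1,t}}^{2} + \lrs{\sup_{s\in[0,t]}e^{\la_{0} s}\n{\pa_{x}g_{2}(s)}_{L_{x,v}^{2}}}^{2}.
\end{align*}

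Next I would remove the terms $\n{g_{1}}_{\ol{X}_{k,t}}$ that still appear on the right by applying the two $L^\infty$ estimates \eqref{equ:Linf,g1,g2,0}--\eqref{equ:Linf,g1,g2,1} from Lemma \ref{lemma,Linf,estimate}, giving
\begin{align*}
\sum_{k=0,1}\n{g_{1}}_{\ol{X}_{k,t}} \lesssim \sum_{k=0,1}\n{w_{l}\pa_{x}^{k}\wt{f}_{0}}_{L_{x,v}^{\infty}} + \al\sum_{k=0,1}\n{g_{2}}_{\ol{X}_{k,t}} + \sum_{k=0,1}\n{[g_{1},g_{2}]}_{\ol{X}_{k,t}}^{2}.
\end{align*}
The quadratic term $\n{g_{1}}_{\ol{X}_{1,t}}^{2}$ appearing in \eqref{equ:L2,c,estimate} can be bounded similarly by $\bigl(\sum_{k}\n{w_{l}\pa_{x}^{k}\wt{f}_{0}}_{L_{x,v}^{\infty}}\bigr)^{2}+\al^{2}\n{g_{2}}_{\ol{X}_{1,t}}^{2}+\sum_{k}\n{[g_{1},g_{2}]}_{\ol{X}_{k,t}}^{4}$, which can be absorbed into the terms already listed (up to the smallness assumption $\n{[g_1,g_2]}_{\ol{X}_{k,t}}\ll 1$ that will be imposed in the closing bootstrap argument, and by noting $\al\lesssim 1$).

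Collecting the three ingredients yields exactly the claimed bound \eqref{equ:g2,L2,estimate,full}. There is no genuine obstacle in this proof, since it is a direct consolidation; the only point to be careful about is that the $\al\sum_{k}\n{g_{2}}_{\ol{X}_{k,t}}$ term is deliberately kept on the right-hand side rather than absorbed — this term will later be controlled by the smallness of the shear rate $\al$ when closing the bootstrap in Section \ref{sec:Global Stability}, together with the microscopic $L^{2}$ decay of $\pa_{x}g_{2}$, which must still be established in Subsection \ref{sec:Higher Energy Estimates}. Thus the proposition is the natural stopping point at which the low-order estimates alone can be combined, isolating exactly which higher-order quantity must be controlled next.
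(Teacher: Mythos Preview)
Your proposal is correct and follows essentially the same chaining of Lemmas \ref{lemma:g2,L2,estimate,p0,p1}, \ref{lemma:L2,c,estimate}, and \ref{lemma,Linf,estimate} as the paper's own proof. The only remark is that your treatment of $\n{g_{1}}_{\ol{X}_{1,t}}^{2}$ is more elaborate than necessary: since $\n{g_{1}}_{\ol{X}_{1,t}}\le \n{[g_{1},g_{2}]}_{\ol{X}_{1,t}}$, this quadratic term is absorbed directly into $\sum_{k}\n{[g_{1},g_{2}]}_{\ol{X}_{k,t}}^{2}$ without any smallness assumption.
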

\begin{proof}
By Lemmas \ref{lemma:g2,L2,estimate,p0,p1}, \ref{lemma:L2,c,estimate}, and \ref{lemma,Linf,estimate}, we obtain
\begin{align*}
\sup_{s\in[0,t]}\n{g_{2}(s)}_{L_{x,v}^{2}}\lesssim&
 \sup_{s\in[0,t]} \n{c(s)}_{L_{x}^{2}}+
\sum_{k=0,1} \n{g_{1}}_{\ol{X}_{k,t}}+\sup_{s\in[0,t]} e^{\la_{0}s}\n{\pa_{x}g_{2}(s)}_{L_{x}^{2}},\\
\sup_{s\in[0,t]}\n{c(s)}_{L_{x}^{2}}\lesssim & \n{w_{l}\wt{f}_{0}}_{L_{x,v}^{\infty}}+\n{g_{1}}_{\ol{X}_{1,t}}+\sup_{s\in[0,t]} e^{\la_{0}s}\n{\pa_{x}g_{2}(s)}_{L_{x}^{2}}\\
&+\n{g_{1}}_{\ol{X}_{1,t}}^{2}+\lrs{\sup_{s\in[0,t]}e^{\la_{0} s}\n{\pa_{x}g_{2}(s)}_{L_{x,v}^{2}}}^{2},\notag\\
\sum_{k=0,1}\n{g_{1}}_{\ol{X}_{k,t}}\lesssim &\sum_{k=0,1}\n{w_{l}\pa_{x}^{k}\wt{f}_{0}}_{L_{x,v}^{\infty}}+\al \sum_{k=0,1}\n{g_{2}}_{\ol{X}_{k,t}}+\sum_{k=0,1}\n{[g_{1},g_{2}]}_{\ol{X}_{k,t}}^{2},
\end{align*}
which suffices to give the desired estimate \eqref{equ:g2,L2,estimate,full}.
\end{proof}

\subsection{Higher Energy Estimates}\label{sec:Higher Energy Estimates}
In this subsection, we focus on deriving higher energy estimates. These estimates are essential for analyzing the long-time decay properties of solutions to the coupled system \eqref{equ:g1,equation}--\eqref{equ:g2,equation} at higher energy levels, and complementing the lower energy estimates established in Proposition \ref{lemma:g2,L2,estimate}.
\begin{lemma}\label{lemma:g2,higher energy}
Let $[g_{1},g_{2}]$ be the solution to \eqref{equ:g1,equation}--\eqref{equ:g2,equation}. There holds that
\begin{align}\label{equ:g2,higher energy}
\sup_{s\in[0,t]} e^{\lambda_{0} s}\n{\pa_{x} g_{2}(s)}_{L_{x,v}^{2}}\lesssim \n{w_{l}\pa_{x}\wt{f}_{0}}_{L_{x,v}^{\infty}}+
\al \n{g_{2}}_{\ol{X}_{1,t}}+
 \lrs{\sum_{k=0,1}\n{[g_{1},g_{2}]}_{\ol{X}_{k,t}}}^{2}.
\end{align}
\end{lemma}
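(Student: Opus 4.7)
The plan is to adapt the proof of Lemma \ref{lemma:g2,L2,estimate,p0,p1} to the $\pa_{x}$-differentiated equation for $g_{2}$, carrying the exponential weight $e^{2\la_{0}t}$ built in so as to extract decay at the nonzero-frequency level. First I would apply $\pa_{x}$ to \eqref{equ:g2,equation} to produce a transport–collision equation of the same form with source $\mu^{-1/2}\chi_{M}K_{\mu}(\pa_{x}g_{1})$. Testing against $\pa_{x}g_{2}$ in $L_{x,v}^{2}$, the transport $e^{\beta t}v\cdot\nabla_{x}$ and the shear term $\al v_{2}\pa_{v_{1}}$ vanish after integration by parts on $\T^{3}\times\R^{3}$, while $-\beta\nabla_{v}\cdot(v\pa_{x}g_{2})$ yields $\frac{3\beta}{2}\n{\pa_{x}g_{2}}^{2}$. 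Invoking the coercivity $\lra{Lh,h}\geq\delta_{0}\n{\mathbf{P}_{1}h}_{L_{v}^{2}}^{2}$ and bounding the source via Young's inequality together with \eqref{equ:Kmu,estimate,Lp} as in \eqref{equ:L2,energy estimate,force,proof}, I arrive at the analogue of \eqref{equ:micro estimate,lower energy,proof}:
\begin{align*}
\frac{d}{dt}\n{\pa_{x}g_{2}}_{L_{x,v}^{2}}^{2}+\la_{0}\n{\mathbf{P}_{1}\pa_{x}g_{2}}_{L_{x,v}^{2}}^{2}\lesssim\ve\n{\mathbf{P}_{0}\pa_{x}g_{2}}_{L_{x,v}^{2}}^{2}+C_{\ve,M}\n{w_{l}\pa_{x}g_{1}}_{L_{x,v}^{\infty}}^{2},
\end{align*}
provided $\la_{0}$ is taken small enough relative to $\delta_{0}$ and $\al,\beta$.

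Next I would close the macroscopic contribution $\n{\mathbf{P}_{0}\pa_{x}g_{2}}_{L_{x,v}^{2}}$. Using $\wt{g}=\mu^{-1/2}g_{1}+g_{2}$ and the basic moment bound \eqref{equ:moment estimate,basic}, the macroscopic coefficients of $\pa_{x}g_{2}$ are dominated by $\n{\pa_{x}[a,\mathbf{b},c]}_{L_{x}^{2}}+\n{w_{l}\pa_{x}g_{1}}_{L_{x,v}^{\infty}}$. Since $\pa_{x}$ on $\T^{3}$ automatically kills the $x$-mean, the vector $\pa_{x}[a,\mathbf{b},c]$ lives entirely in $P_{\neq0}^{x}$, so the moment equations from Lemma \ref{lemma:evolution equation,moments} at nonzero frequency yield exponential decay through the spectral gap of $A_{\al}$ (cf.\ \eqref{equ:notation,U,A,R}) away from its null eigenvalue $\la_{1}=0$. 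Combining with the microscopic coercivity, multiplying by $e^{2\la_{0}t}$, integrating in time with $g_{2}(0)=0$, and applying Gronwall produces
\begin{align*}
\sup_{s\in[0,t]}e^{\la_{0}s}\n{\pa_{x}g_{2}(s)}_{L_{x,v}^{2}}\lesssim\sup_{s\in[0,t]}e^{\la_{0}s}\n{w_{l}\pa_{x}g_{1}(s)}_{L_{x,v}^{\infty}}=\n{g_{1}}_{\ol{X}_{1,t}}.
\end{align*}
Plugging in the $L^{\infty}$ bound \eqref{equ:Linf,g1,g2,1} for $\n{g_{1}}_{\ol{X}_{1,t}}$ then delivers the claimed estimate \eqref{equ:g2,higher energy}.

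The main obstacle is the macroscopic closure in the second step: one has to avoid sacrificing an extra $x$-derivative to bound $\n{\mathbf{P}_{0}\pa_{x}g_{2}}_{L_{x,v}^{2}}$. The resolution rests on two structural observations specific to the higher-order setting. First, spatial differentiation on the torus automatically projects onto the nonzero-frequency sector, so the macroscopic piece of $\pa_{x}g_{2}$ already enjoys the dynamics of $P_{\neq0}^{x}$. Second, the second-moment matrix $A_{\al}$ from \eqref{equ:notation,U,A,R} confines its null eigenvalue $\la_{1}=0$ entirely to the zero-frequency mode, leaving a uniform spectral gap on $P_{\neq0}^{x}$ for small $\al$. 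Consequently the delicate null-structure cancellation \eqref{equ:Qff,ij}--\eqref{equ:Qff,ii} that was essential in Lemma \ref{lemma:L2,c,estimate} is not needed here: the microscopic coercivity combined with the spectral gap at nonzero frequency is enough to close the estimate.
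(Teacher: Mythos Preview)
Your first step---deriving the basic energy inequality for $\pa_{x}g_{2}$ and extracting the microscopic coercivity $\delta_{0}\n{\mathbf{P}_{1}\pa_{x}g_{2}}_{L_{x,v}^{2}}^{2}$---matches the paper exactly. The gap is in your macroscopic closure, which the paper handles by an entirely different mechanism from the one you invoke.

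Your claim that ``the second-moment matrix $A_{\al}$ confines its null eigenvalue $\la_{1}=0$ entirely to the zero-frequency mode, leaving a uniform spectral gap on $P_{\neq0}^{x}$'' conflates two distinct objects. The $3\times3$ matrix $A_{\al}$ in \eqref{equ:notation,U,A,R} arises only after projecting the second-moment equations onto $P_{0}^{x}$, precisely because that projection kills the transport terms $e^{\beta t}\sum_{i}\pa_{x_{i}}d_{ijk}$ and produces a closed ODE system. At nonzero frequency those transport terms survive and couple the second moments to third- and higher-order moments, so the system does not close at second order and $A_{\al}$ simply does not govern the dynamics on $P_{\neq0}^{x}$. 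The dissipation of macroscopic modes at nonzero frequency comes instead from the hypocoercive interaction between transport $v\cdot\nabla_{x}$ and collision $L$, not from any finite matrix.

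The paper captures this by constructing a Kawashima-type interaction functional $\mathcal{E}_{int}(t)$ out of cross-inner-products of $[a,\mathbf{b},c]$ with moments $d_{ij}$, $B_{i}$ (see the functionals $\mathcal{E}_{int,a}$, $\mathcal{E}_{int,\mathbf{b}}$, $\mathcal{E}_{int,c}$), uses the moment equations \eqref{equmomentequationa}--\eqref{equ:moment,equation,B3} to produce the coercive term $\n{\pa_{x}[a,\mathbf{b},c]}_{L_{x}^{2}}^{2}$ in $\frac{d}{dt}\mathcal{E}_{int}$, and then combines $\n{\pa_{x}g_{2}}_{L_{x,v}^{2}}^{2}+3\la_{0}e^{-\beta t}\mathcal{E}_{int}$ into a single Lyapunov functional with full dissipation $\n{\mathbf{P}_{1}\pa_{x}g_{2}}^{2}+\n{\mathbf{P}_{0}\pa_{x}g_{2}}^{2}$. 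Bounding the remainder terms $\mathcal{R}_{\al}(\zeta)$ in these moment equations is where the $\al\n{g_{2}}_{\ol{X}_{1,t}}$ and quadratic contributions in \eqref{equ:g2,higher energy} actually originate---they do not come solely from substituting \eqref{equ:Linf,g1,g2,1} at the end, as your sketch suggests. Without this functional your differential inequality lacks any damping on $\mathbf{P}_{0}\pa_{x}g_{2}$, and multiplying by $e^{2\la_{0}t}$ followed by Gronwall cannot close.
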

\begin{proof}
We recall that
\begin{align}\label{equ:g2,equation,energy estimate}
\pa_t \pa_{x}g_{2}+e^{\beta t} \sum_{i=1}^{3}v_{i}\pa_{x_{i}} \pa_{x}g_{2}-\beta \nabla_{v} \cdot\left(v \pa_{x}g_{2}\right)-\alpha v_{2} \pa_{v_{1}} \pa_{x}g_{2}+L \pa_{x}g_{2} =\mu^{-1 / 2}\chi_{M}K_{\mu} \pa_{x}g_{1}.
\end{align}
Testing \eqref{equ:g2,equation,energy estimate} by $\pa_{x}g_{2}$, we obtain
\begin{align*}
&\lra{\pa_t \pa_{x}g_{2}+e^{\beta t} \sum_{i=1}^{3}v_{i}\pa_{x_{i}} \pa_{x}g_{2}-\beta \nabla_{v} \cdot\left(v \pa_{x}g_{2}\right)-\alpha v_{2} \pa_{v_{1}} \pa_{x}g_{2}+L \pa_{x}g_{2} ,\pa_{x}g_{2}}\\
=&\lra{\mu^{-1 / 2}\chi_{M}K_{\mu} \pa_{x}g_{1},\pa_{x}g_{2}}.
\end{align*}
Following the same argument as used in \eqref{equ:micro estimate,lower energy,proof},
we have
\begin{align}\label{equ:micro estimate,higher energy,proof}
\frac{d}{dt}\n{\pa_{x}g_{2}}_{L_{x,v}^{2}}^{2}+\delta_{0}\n{\mathbf{P}_{1}\pa_{x}g_{2}}_{L_{x,v}^{2}}^{2}\leq \ve\n{\mathbf{P}_{0}\pa_{x}g_{2}}_{L_{x,v}^{2}}^{2}
+C_{\ve,M}\n{w_{l}\pa_{x}g_{1}}_{L_{x,v}^{\infty}}^{2}.
\end{align}

With the basic energy estimate in hand, we now focus on estimating the macroscopic components.
To achieve this, we employ the macro-micro decomposition
\begin{align*}
d_{ij}=\mathbf{P}_{0}d_{ij}+\mathbf{P}_{1}d_{ij},\quad
B_{i}=\mathbf{P}_{0}B_{i}+\mathbf{P}_{1}B_{i},\quad B_{ij}=\mathbf{P}_{0}B_{ij}+\mathbf{P}_{1}B_{ij},
\end{align*}
where
\begin{equation}\label{equ:notation,P0P1,dij}
\left\{
\begin{aligned}
&\mathbf{P}_{0}d_{ij}:=\lra{v_{i}v_{j},\sqrt{\mu}\mathbf{P}_{0}\wt{g}},\quad \mathbf{P}_{1}d_{ij}:=\lra{v_{i}v_{j},\sqrt{\mu}\mathbf{P}_{1}\wt{g}},\\
&\mathbf{P}_{0}B_{i}:=\lra{v_{i}(|v|^{2}-5),\sqrt{\mu}\mathbf{P}_{0}\wt{g}},\quad \mathbf{P}_{1}B_{i}:=\lra{v_{i}(|v|^{2}-5),\sqrt{\mu}\mathbf{P}_{1}\wt{g}},\\
&\mathbf{P}_{0}B_{ij}:=\lra{v_{i}v_{j}(|v|^{2}-5),\sqrt{\mu}\mathbf{P}_{0}\wt{g}},\quad \mathbf{P}_{1}B_{ij}:=\lra{v_{i}v_{j}(|v|^{2}-5),\sqrt{\mu}\mathbf{P}_{1}\wt{g}}.
\end{aligned}
\right.
\end{equation}
A direct calculation yields
\begin{align*}
\mathbf{P}_{0}d_{ij}=\delta_{ij}\lrs{a+\frac{\sqrt{6}}{3}c},\quad
\mathbf{P}_{0}B_{i}=0,\quad
\mathbf{P}_{0}B_{ij}=\delta_{ij}\frac{5\sqrt{6}}{3}c.
\end{align*}

Next, by the evolution equations of moments in Lemma \ref{lemma:evolution equation,moments}, we derive
\begin{align}
\pa_{t}a+e^{\beta t}\sum_{i=1}^{3}\pa_{x_{i}}b_{i}=&0,\label{equ:moment,equation,a,P01}\\
\pa_{t}b_{i}+e^{\beta t}\pa_{x_{i}}a=&\mathcal{R}_{\al}(b_{i}),\label{equ:moment,equation,bi,P01}\\
\pa_{t}c+\frac{\sqrt{6}}{3}e^{\beta t}\sum_{i=1}^{3}\pa_{x_{i}}b_{i}=&\mathcal{R}_{\al}(c),\label{equ:moment,equation,c,P01}\\
\pa_{t}d_{ij}+e^{\beta t}\pa_{x_{i}}b_{j}+e^{\beta t}\pa_{x_{j}}b_{i}=&\mathcal{R}_{\al}(d_{ij}),\ i\neq j,\label{equ:moment,equation,dij,P01}\\
\frac{3}{2}\pa_{t}\mathbf{P}_{1}d_{jj}+3e^{\beta t}\pa_{x_{j}}b_{j}-e^{\beta t}\sum_{i=1}^{3}\pa_{x_{i}}b_{i}=&
\mathcal{R}_{\al}(\mathbf{P}_{1}d_{jj}),\label{equ:moment,equation,djj,P01}\\
\pa_{t}\lrs{\frac{3}{2}\pa_{x_{j}}\mathbf{P}_{1}d_{jj}+\sum_{i\neq j}\pa_{x_{i}}d_{ij}}
-e^{\beta t}\sum_{i=1}^{3}\pa_{x_{i}}^{2}b_{j}-e^{\beta t}\pa_{x_{j}}^{2}b_{j}=&\mathcal{R}_{\al}(\pa_{x_{j}}\mathbf{P}_{1}d_{jj},\sum_{i\neq j}\pa_{x_{i}}d_{ij}),\label{equ:moment,equation,djj,dij,P01}\\
\pa_{t}B_{i}+\frac{5\sqrt{6}}{3}e^{\beta t}\pa_{x_{i}}c=&\mathcal{R}_{\al}(B_{i}),\label{equ:moment,equation,B1,P01}
\end{align}
where the remainder terms $\mathcal{R}_{\al}(\cdot)$ and $\mathcal{R}_{\al}(\cdot,\cdot)$ consist of terms involving the smallness factor $\al$, the microscopic part, and the nonlinear part.

To provide a coercive estimate for the macroscopic part, we introduce the interaction functional
\begin{align*}
\mathcal{E}_{int}(t):=
2\mathcal{E}_{int,a}(t)+10\mathcal{E}_{int,\mathbf{b}}(t)+10\mathcal{E}_{int,c}(t),
\end{align*}
where
\begin{equation*}
\left\{
\begin{aligned}
\mathcal{E}_{int,a}(t):=&\sum_{i=1}^{3}\lra{b_{i},\pa_{x_{i}}a},\\
\mathcal{E}_{int,\mathbf{b}}(t):=&\sum_{j=1}^{3}\bblra{\frac{3}{2}\pa_{x_{j}}\mathbf{P}_{1}d_{jj}+\sum_{i\neq j}\pa_{x_{i}}d_{ij},b_{j}},\\
\mathcal{E}_{int,c}(t):=&\sum_{i=1}^{3}\lra{B_{i},\pa_{x_{i}}c}.
\end{aligned}
\right.
\end{equation*}

We now derive functional inequalities for the interaction functional.
Starting with $\mathcal{E}_{int,a}(t)$, by \eqref{equ:moment,equation,a,P01}, \eqref{equ:moment,equation,bi,P01}, and Young's inequality, we get
\begin{align*}
\frac{d}{dt}\mathcal{E}_{int,a}(t)+e^{\beta t}\n{\pa_{x}a}_{L_{x}^{2}}^{2}
=&-\sum_{i=1}^{3}\lra{\pa_{x_{i}}b_{i},\pa_{t}a}
+\sum_{i=1}^{3}\lra{\mathcal{R}_{\al}(b_{i}),\pa_{x_{i}}a}\\
\leq&\ve e^{\beta t}\n{\pa_{x}a}_{L_{x}^{2}}^{2}+5e^{\beta t}\sum_{i=1}^{3}\n{\pa_{x}b_{i}}_{L_{x}^{2}}^{2}+
C_{\ve}e^{-\beta t}\sum_{i=1}^{3}\n{\mathcal{R}_{\al}(b_{i})}_{L_{x}^{2}}^{2}.
\end{align*}
Similarly, for $\mathcal{E}_{int,\mathbf{b}}(t)$, by \eqref{equ:moment,equation,bi,P01} and \eqref{equ:moment,equation,djj,dij,P01}, we obtain
\begin{align*}
&\frac{d}{dt}\mathcal{E}_{int,\mathbf{b}}(t)+e^{\beta t}\n{\pa_{x}\mathbf{b}}_{L_{x}^{2}}^{2}\\
=&\sum_{j=1}^{3}\bblra{\frac{3}{2}\pa_{x_{j}}\mathbf{P}_{1}d_{jj}+\sum_{i\neq j}\pa_{x_{i}}d_{ij},\pa_{t}b_{j}}+\sum_{j=1}^{3}
\bblra{\mathcal{R}_{\al}(\pa_{x_{j}}\mathbf{P}_{1}d_{jj},\sum_{i\neq j}\pa_{x_{i}}d_{ij}),b_{j}}\\
\leq&  \ve e^{\beta t}\lrs{\n{\pa_{x}a}_{L_{x}^{2}}^{2}+\n{\pa_{x}\mathbf{b}}_{L_{x}^{2}}^{2}}+C_{\ve}e^{\beta t}\sum_{j=1}^{3}
\lrs{\n{\pa_{x}\mathbf{P}_{1}d_{jj}}_{L_{x}^{2}}^{2}+\sum_{i\neq j}\n{\pa_{x}d_{ij}}_{L_{x}^{2}}^{2}}\\
&+C_{\ve}e^{-\beta t}\sum_{j=1}^{3}\lrs{\n{P_{\neq 0}^{x}\mathcal{R}_{\al}(b_{j})}_{L_{x}^{2}}^{2}
+\n{P_{\neq 0}^{x}\mathcal{R}_{\al}(\pa_{x_{j}}\mathbf{P}_{1}d_{jj},\sum_{i\neq j}\pa_{x_{i}}d_{ij})}_{L_{x}^{2}}^{2}}.
\end{align*}
Lastly, for $\mathcal{E}_{int,c}(t)$, by \eqref{equ:moment,equation,c,P01} and \eqref{equ:moment,equation,B1,P01}, we have
\begin{align*}
&\frac{d}{dt}\mathcal{E}_{int,c}(t)+e^{\beta t}\n{\pa_{x}c}_{L_{x}^{2}}^{2}\\
=&-\sum_{i=1}^{3}\lra{\pa_{x_{i}}B_{i},\pa_{t}c}+\sum_{i=1}^{3}
\lra{\mathcal{R}_{\al}(B_{i}),\pa_{x_{i}}c}\\
=&\sum_{i=1}^{3}\bblra{\pa_{x_{i}}B_{i},\frac{\sqrt{6}}{3}e^{\beta t}\sum_{j=1}^{3}\pa_{x_{j}}b_{j}-\mathcal{R}_{\al}(c)}+\sum_{i=1}^{3}
\lra{\mathcal{R}_{\al}(B_{i}),\pa_{x_{i}}c}\\
\leq& \ve e^{\beta t}\lrs{\n{\pa_{x}\mathbf{b}}_{L_{x}^{2}}^{2}+\n{\pa_{x}c}_{L_{x}^{2}}^{2}}+C_{\ve}e^{\beta t}\sum_{i=1}^{3}
\n{\pa_{x_{i}}B_{i}}_{L_{x}^{2}}^{2}\\
&+C_{\ve}e^{-\beta t}\sum_{i=1}^{3}\lrs{\n{P_{\neq 0}^{x}\mathcal{R}_{\al}(c)}_{L_{x}^{2}}^{2}+\n{P_{\neq 0}^{x}\mathcal{R}_{\al}(B_{i})}_{L_{x}^{2}}^{2}}.
\end{align*}

Summing up the estimates for $\mathcal{E}_{int,a}(t)$, $\mathcal{E}_{int,\mathbf{b}}(t)$, and $\mathcal{E}_{int,c}(t)$, we conclude
\begin{align*}
&e^{-\beta t}\frac{d}{dt}\mathcal{E}_{int}+\n{\pa_{x}[a,b,c]}_{L_{x}^{2}}^{2}\leq C_{\ve}I+C_{\ve}I\!I
\end{align*}
where
\begin{align*}
I=&\sum_{i=1}^{3}
\n{\pa_{x_{i}}B_{i}}_{L_{x}^{2}}^{2}+\sum_{j=1}^{3}
\lrs{\n{\pa_{x}\mathbf{P}_{1}d_{jj}}_{L_{x}^{2}}^{2}+\sum_{i\neq j}\n{\pa_{x}d_{ij}}_{L_{x}^{2}}^{2}},\\
I\!I=&e^{-2\beta t}\sum_{i=1}^{3}\sum_{\zeta}\n{P_{\neq 0}^{x}\mathcal{R}_{\al}(\zeta)}_{L_{x}^{2}}^{2},
\end{align*}
with $\zeta\in \lr{c,b_{j},B_{j},[\pa_{x_{j}}\mathbf{P}_{1}d_{11},\sum_{i\neq j}\pa_{x_{i}}d_{ij}]}$.

For the term $I$, we use a similar approach as in the moment estimate \eqref{equ:moment estimate,basic}. For any polynomial $p_{n}(v)$, we have
\begin{align}\label{equ:moment estimate,P1}
\bn{\lra{\mu^{\frac{1}{2}}p_{n}(v),\pa_{x}\mathbf{P}_{1}\wt{g}}}_{L_{x}^{2}}\leq & \bn{\lra{\mu^{\frac{1}{2}}p_{n}(v),\mathbf{P}_{1}(\mu^{-\frac{1}{2}}\pa_{x}g_{1})}}_{L_{x}^{2}}+
\bn{\lra{\mu^{\frac{1}{2}}p_{n}(v),\mathbf{P}_{1}(\pa_{x}g_{2})}}_{L_{x}^{2}}\\
\lesssim& \n{w_{l}\pa_{x}g_{1}}_{L_{x}^{\infty}L_{v}^{\infty}}+\n{\pa_{x}\mathbf{P}_{1}g_{2}}_{L_{x,v}^{2}}.\notag
\end{align}
Noting that $B_{j}=\mathbf{P}_{1}B_{j}$, $d_{ij}=\mathbf{P}_{1}d_{ij}$ for $i\neq j$, by the definition \eqref{equ:notation,P0P1,dij} and the above estimate \eqref{equ:moment estimate,P1},
we hence obtain
\begin{align}\label{equ:estimate,I,higher}
I\lesssim& \n{w_{l}\pa_{x}g_{1}}_{L_{x,v}^{\infty}}^{2}+\n{\pa_{x}\mathbf{P}_{1}g_{2}}_{L_{x,v}^{2}}^{2}.
\end{align}

For the term $I\!I$, we only deal with the $\zeta=B_{1}$ case, as the other cases follow similarly.
By the moment equation \eqref{equmomentequationB1} and \eqref{equ:moment,equation,B1,P01}, we have
\begin{align}\label{equ:R,alpha,B1,proof}
P_{\neq 0}^{x}\mathcal{R}_{\al}(B_{1})=&P_{\neq 0}^{x}\lrs{-3\beta B_{1}-10\beta b_{1}-3\al d_{112}-e^{\beta t}\sum_{i=1}^{3} \pa_{x_{i}} \mathbf{P}_{1}B_{1i}+
\lra{v_{1}(|v|^{2}-5),\wt{\mathcal{Q}}}}.
\end{align}

For the linear terms in \eqref{equ:R,alpha,B1,proof},
by the moment estimates \eqref{equ:moment estimate,basic} and \eqref{equ:moment estimate,P1}, we get
\begin{align}
\beta \n{P_{\neq 0}^{x}B_{1}}_{L_{x}^{2}}+\beta \n{P_{\neq 0}^{x}b_{1}}_{L_{x}^{2}}+\al \n{P_{\neq 0}^{x}d_{112}}_{L_{x}^{2}}\lesssim& \al
\n{w_{l}\pa_{x}g_{1}}_{L_{x,v}^{\infty}}+\al \n{w_{l}\pa_{x}g_{2}}_{L_{x,v}^{\infty}},\label{equ:R,alpha,B1,beta}
\end{align}
and
\begin{align}
e^{\beta t}\sum_{i=1}^{3} \n{\pa_{x_{i}} \mathbf{P}_{1}B_{1i}}_{L_{x}^{2}}\lesssim &e^{\beta t}
\n{w_{l}\pa_{x}g_{1}}_{L_{x,v}^{\infty}}+e^{\beta t} \n{\pa_{x}\mathbf{P}_{1}g_{2}}_{L_{x,v}^{2}}.\label{equ:R,alpha,B1,P1}
\end{align}

For the force term $\lra{v_{1}|v^{2}|-5,\wt{Q}}$ in \eqref{equ:R,alpha,B1,proof}, we recall that
\begin{align*}
\wt{\mathcal{Q}}=Q(\wt{f},\wt{f})+Q_{sym}(\wt{f},\mu)+\al Q_{sym}(\wt{f},\mu^{\frac{1}{2}}G_{1}).
\end{align*}
We then estimate each term separately.
By Lemma \ref{lemma:Q,bilinear estimate,Lp}, the moment estimates \eqref{equ:moment estimate,basic}, and \eqref{equ:moment estimate,P1}, we have
\begin{align}\label{equ:R,alpha,B1,Qff}
&\n{P_{\neq 0}^{x}\blra{v_{1}(|v|^{2}-5),Q(\wt{f},\wt{f})}}_{L_{x}^{2}}\\
\lesssim & \n{Q_{sym}(\lra{v}^{3}|\pa_{x}\wt{f}|,\lra{v}^{3}|\wt{f}|)}_{L_{x}^{2}L_{v}^{1}}
+\n{Q_{sym}(\lra{v}^{3}|\wt{f}|,\lra{v}^{3}|\pa_{x}\wt{f}|)}_{L_{x}^{2}L_{v}^{1}}\notag\\
\lesssim& \n{\lra{v}^{3}\pa_{x}\wt{f}}_{L_{x}^{4}L_{v}^{1}}\n{\lra{v}^{3}\wt{f}}_{L_{x}^{4}L_{v}^{1}}
+\n{\lra{v}^{3}\pa_{x}\wt{f}}_{L_{x}^{4}L_{v}^{1}}\n{\lra{v}^{3}\wt{f}}_{L_{x}^{4}L_{v}^{1}}\notag\\
\lesssim& \lrs{\n{w_{l}g_{1}}_{L_{x,v}^{\infty}}+\n{w_{l}g_{2}}_{L_{x,v}^{\infty}}}
\lrs{\n{w_{l}\pa_{x}g_{1}}_{L_{x,v}^{\infty}}+\n{w_{l}\pa_{x}g_{2}}_{L_{x,v}^{\infty}}},\notag
\end{align}
and
\begin{align}\label{equ:R,alpha,B1,Qfm}
&\n{P_{\neq 0}^{x}\blra{v_{1}(|v|^{2}-5),Q_{sym}(\wt{f},\mu)}}_{L_{x}^{2}}\\
\leq &\n{\blra{v_{1}(|v|^{2}-5),Q_{sym}(\pa_{x}\wt{f},\mu)}}_{L_{x}^{2}}\notag\\
\leq&
\n{\blra{v_{1}(|v|^{2}-5),Q_{sym}(\pa_{x}g_{1},\mu)}}_{L_{x}^{2}}
+\n{\blra{v_{1}(|v|^{2}-5)\mu^{\frac{1}{2}},L\pa_{x}g_{2}}}_{L_{x}^{2}}\notag\\
\lesssim&\n{w_{l}\pa_{x}g_{1}}_{L_{x,v}^{\infty}}+\n{\pa_{x}\mathbf{P}_{1}g_{2}}_{L_{x,v}^{2}},\notag
\end{align}
and
\begin{align}\label{equ:R,alpha,B1,QfG1}
\al \n{P_{\neq 0}^{x}\blra{v_{1}(|v|^{2}-5),Q_{sym}(\wt{f},\mu^{\frac{1}{2}}G_{1})}}_{L_{x}^{2}}
\lesssim& \al\n{w_{l}\pa_{x}g_{1}}_{L_{x,v}^{\infty}}+\al \n{w_{l}\pa_{x}g_{2}}_{L_{x,v}^{\infty}}.
\end{align}
Combining the estimates \eqref{equ:R,alpha,B1,beta}, \eqref{equ:R,alpha,B1,P1}, \eqref{equ:R,alpha,B1,Qff}, \eqref{equ:R,alpha,B1,Qfm}, and \eqref{equ:R,alpha,B1,QfG1}, we arrive at
\begin{align*}
&e^{-2\beta t}\n{P_{\neq 0}^{x}\mathcal{R}_{\al}(B_{1})}_{L_{x}^{2}}^{2}\\
\lesssim &
\n{\pa_{x}\mathbf{P}_{1}g_{2}}_{L_{x,v}^{2}}^{2}+
\lrs{\n{w_{l}g_{1}}_{L_{x,v}^{\infty}}+\n{w_{l}g_{2}}_{L_{x,v}^{\infty}}}^{2}
\lrs{\n{w_{l}\pa_{x}g_{1}}_{L_{x,v}^{\infty}}+\n{w_{l}\pa_{x}g_{2}}_{L_{x,v}^{\infty}}}^{2}\\
&+\n{w_{l}\pa_{x}g_{1}}_{L_{x,v}^{\infty}}^{2}+\al^{2} \n{w_{l}\pa_{x}g_{2}}_{L_{x,v}^{\infty}}^{2}.
\end{align*}
Therefore, we arrive at
\begin{align}\label{equ:estimate,II,higher}
I\!I\lesssim& \n{\pa_{x}\mathbf{P}_{1}g_{2}}_{L_{x,v}^{2}}^{2}+
\lrs{\n{w_{l}g_{1}}_{L_{x,v}^{\infty}}+\n{w_{l}g_{2}}_{L_{x,v}^{\infty}}}^{2}
\lrs{\n{w_{l}\pa_{x}g_{1}}_{L_{x,v}^{\infty}}+\n{w_{l}\pa_{x}g_{2}}_{L_{x,v}^{\infty}}}^{2}\\
&+\n{w_{l}\pa_{x}g_{1}}_{L_{x,v}^{\infty}}^{2}+\al^{2} \n{w_{l}\pa_{x}g_{2}}_{L_{x,v}^{\infty}}^{2}.\notag
\end{align}

Combining estimates \eqref{equ:estimate,I,higher} for $I$ and \eqref{equ:estimate,II,higher} for $I\! I$, we reach
\begin{align}\label{equ:abc,higher energy,proof}
&e^{-\beta t}\frac{d}{dt}\mathcal{E}_{int}+\n{\pa_{x}[a,b,c]}_{L_{x}^{2}}^{2}\\
\lesssim &
\n{\pa_{x}\mathbf{P}_{1}g_{2}}_{L_{x,v}^{2}}^{2}+
\lrs{\n{w_{l}g_{1}}_{L_{x,v}^{\infty}}+\n{w_{l}g_{2}}_{L_{x,v}^{\infty}}}^{2}
\lrs{\n{w_{l}\pa_{x}g_{1}}_{L_{x,v}^{\infty}}+\n{w_{l}\pa_{x}g_{2}}_{L_{x,v}^{\infty}}}^{2}\notag\\
&+\n{w_{l}\pa_{x}g_{1}}_{L_{x,v}^{\infty}}^{2}+\al^{2} \n{w_{l}\pa_{x}g_{2}}_{L_{x,v}^{\infty}}^{2}.\notag
\end{align}

Then we have
\begin{align}\label{equ:higher energy,energy estimate,proof}
&\frac{d}{dt}\lrs{\n{\pa_{x} g_{2}}_{L_{x,v}^{2}}^{2}+3\la_{0}e^{-\beta t}\mathcal{E}_{int}}+
\delta_{0}\n{\pa_{x} \mathbf{P}_{1} g_{2}}_{L_{x,v}^{2}}^{2}+3\la_{0}\n{\pa_{x}\mathbf{P}_{0}g_{2}}_{L_{x,v}^{2}}^{2}\\
=&\frac{d}{dt}\n{\pa_{x} g_{2}}_{L_{x,v}^{2}}^{2}+3\la_{0}e^{-\beta t}\frac{d}{dt}\mathcal{E}_{int}+
3\la_{0}\n{\pa_{x} \mathbf{P}_{1} g_{2}}_{L_{x,v}^{2}}^{2}+\delta_{0}\n{\pa_{x}\mathbf{P}_{0}g_{2}}_{L_{x,v}^{2}}^{2}-3\la_{0}\beta e^{-\beta t}\mathcal{E}_{int}.\notag
\end{align}
Putting \eqref{equ:micro estimate,higher energy,proof} and $\eqref{equ:abc,higher energy,proof}$ into \eqref{equ:higher energy,energy estimate,proof},
and using estimates that
\begin{align*}
\ve\n{\pa_{x}\mathbf{P}_{0}g_{2}}_{L_{x,v}^{2}}\leq& \ve\n{\pa_{x}[a,b,c]}_{L_{x}^{2}}+ \ve \n{\pa_{x}w_{l}g_{1}}_{L_{x,v}^{\infty}},\\
\beta |e^{-\beta t}\mathcal{E}_{int}(t)|\lesssim & \al^{2}\n{w_{l}\pa_{x}g_{1}(t)}_{L_{x,v}^{\infty}}^{2}+\al^{2}\n{w_{l}\pa_{x}g_{2}(t)}_{L_{x,v}^{\infty}}^{2},
\end{align*}
with $\la_{0}\ll \delta_{0}$ and $\ve\ll \la_{0}$, we arrive at
\begin{align*}
&\frac{d}{dt}\lrs{\n{\pa_{x} g_{2}}_{L_{x,v}^{2}}^{2}+3\la_{0}e^{-\beta t}\mathcal{E}_{int}}+
2\lambda_0\lrs{\n{\pa_{x} \mathbf{P}_{1} g_{2}}_{L_{x,v}^{2}}^{2}+\n{\pa_{x}\mathbf{P}_{0}g_{2}}_{L_{x,v}^{2}}^{2}}\\
\lesssim&\lrs{\n{w_{l}g_{1}}_{L_{x,v}^{\infty}}+\n{w_{l}g_{2}}_{L_{x,v}^{\infty}}}^{2}
\lrs{\n{w_{l}\pa_{x}g_{1}}_{L_{x,v}^{\infty}}+\n{w_{l}\pa_{x}g_{2}}_{L_{x,v}^{\infty}}}^{2}\notag\\
&+\n{w_{l}\pa_{x}g_{1}}_{L_{x,v}^{\infty}}^{2}+\al^{2} \n{w_{l}\pa_{x}g_{2}}_{L_{x,v}^{\infty}}^{2}.
\end{align*}

Applying Gronwall's inequality and absorbing the term $3\la_{0}\sup_{s\in[0,t]} e^{\la_{0}s}|e^{-\beta s}\mathcal{E}_{int}(s)|$, we arrive at
\begin{align*}
&\sup_{s\in[0,t]} e^{\la_{0}s}\n{\pa_{x} g_{2}(s)}_{L_{x,v}^{2}}\\
 \lesssim & \sup_{s\in[0,t]} e^{\la_{0}s}\n{w_{l} \pa_{x} g_{1}(s)}_{L_{x,v}^{\infty}} + \alpha \sup_{s\in[0,t]} e^{\la_{0}s}\n{w_{l} \pa_{x} g_{2}(s)}_{L_{x,v}^{\infty}}\notag\\
 &+\sup_{s\in[0,t]}e^{\la_{0}s}\lrs{\n{w_{l}g_{1}}_{L_{x,v}^{\infty}}+\n{w_{l}g_{2}}_{L_{x,v}^{\infty}}}
 \lrs{\n{w_{l}\pa_{x}g_{1}}_{L_{x,v}^{\infty}}+\n{w_{l}\pa_{x}g_{2}}_{L_{x,v}^{\infty}}}\notag\\
 \lesssim& \n{g_{1}}_{\ol{X}_{1,t}}+\al \n{g_{2}}_{\ol{X}_{1,t}}+
\lrs{\n{[g_{1},g_{2}]}_{X_{0,t}}}\lrs{\n{[g_{1},g_{2}]}_{\ol{X}_{1,t}}}\notag\\
 \lesssim& \n{w_{l}\pa_{x}\wt{f}_{0}}_{L_{x,v}^{\infty}}+\al \n{g_{2}}_{\ol{X}_{1,t}}+
 \lrs{\sum_{k=0,1}\n{[g_{1},g_{2}]}_{\ol{X}_{k,t}}}^{2},\notag
\end{align*}
where in the last inequality we used Lemma \ref{lemma,Linf,estimate} to absorb $\n{g_{1}}_{\ol{X}_{1,t}}$. Hence, we complete the proof of \eqref{equ:g2,higher energy}.
\end{proof}

\section{Global Stability and the Energy Growth}\label{sec:Global Stability}
In the section, we focus on proving the main Theorem \ref{thm:main theorem} which concerns the global stability, and provide a detailed analysis of the long-time behavior of the energy growth.
\begin{proof}[\textbf{Proof of Theorem \ref{thm:main theorem}}]

Combining the results from \eqref{equ:Linf,L2,estimate} in Lemma \ref{lemma,Linf,estimate} and \eqref{equ:g2,L2,estimate,full} in Lemma \ref{lemma:g2,L2,estimate}, we derive
\begin{align}\label{equ:main proof,X,L2}
&\sum_{k=0,1}\n{[g_{1},g_{2}]}_{\ol{X}_{k,t}} \\
\lesssim&\sum_{k=0,1}\n{w_{l} \pa_x^{k} \wt{f}_{0}}_{L_{x,v}^{\infty}}+
\sup_{s\in[0,t]}\n{g_{2}(s)}_{L_{x,v}^{2}}
 +\sup_{s\in[0,t]} e^{\la_{0} s}\n{\pa_{x}g_{2}(s)}_{L_{x,v}^{2}}+
 \sum_{k=0,1}\n{[g_{1},g_{2}]}_{\ol{X}_{k,t}}^{2}\notag\\
\lesssim&  \sum_{k=0,1}\n{w_{l} \pa_x^{k} \wt{f}_{0}}_{L_{x,v}^{\infty}}+
\al \sum_{k=0,1}\n{g_{2}}_{\ol{X}_{k,t}}+\ve\sum_{k=0,1}\n{[g_{1},g_{2}]}_{\ol{X}_{k,t}} +C_{\ve}\sum_{k=0,1}\n{[g_{1},g_{2}]}_{\ol{X}_{k,t}}^{4}\notag\\
 &+\sup_{s\in[0,t]} e^{\la_{0} s}\n{\pa_{x}g_{2}(s)}_{L_{x,v}^{2}}+\lrs{\sup_{s\in[0,t]}e^{\la_{0} s}\n{\pa_{x}g_{2}(s)}_{L_{x,v}^{2}}}^{2},\notag
\end{align}
where in the last inequality we used the interpolation inequality that $x^{2}\lesssim \ve x+C_{\ve}x^{4}$. For the higher energy estimate,
we use Lemma \ref{lemma:g2,higher energy} to get
\begin{align}\label{equ:main proof,higher,g2}
&\sup_{s\in[0,t]} e^{\la_{0} s}\n{\pa_{x}g_{2}(s)}_{L_{x,v}^{2}}\\
\lesssim&
\n{w_{l}\pa_{x}\wt{f}_{0}}_{L_{x,v}^{\infty}}+\al \n{g_{2}}_{\ol{X}_{1,t}}+
 \lrs{\sum_{k=0,1}\n{[g_{1},g_{2}]}_{\ol{X}_{k,t}}}^{2}\notag\\
\leq& \n{w_{l}\pa_{x}\wt{f}_{0}}_{L_{x,v}^{\infty}}+\al \n{g_{2}}_{\ol{X}_{1,t}}+
\ve\sum_{k=0,1}\n{[g_{1},g_{2}]}_{\ol{X}_{k,t}}+C_{\ve}\lrs{\sum_{k=0,1}\n{[g_{1},g_{2}]}_{\ol{X}_{k,t}}}^{4}.\notag
\end{align}
Noting that $\n{w_{l}\pa_{x}\wt{f}_{0}}_{L_{x,v}^{\infty}}^{2}\lesssim \n{w_{l}\pa_{x}\wt{f}_{0}}_{L_{x,v}^{\infty}}\lesssim 1$, we further obtain
\begin{align}\label{equ:main proof,higher,g2,square}
\lrs{\sup_{s\in[0,t]}e^{\la_{0} s}\n{\pa_{x}g_{2}(s)}_{L_{x,v}^{2}}}^{2}
\lesssim&
\n{w_{l}\pa_{x}\wt{f}_{0}}_{L_{x,v}^{\infty}}^{2}+\al^{2} \n{g_{2}}_{\ol{X}_{1,t}}^{2}+
 \lrs{\sum_{k=0,1}\n{[g_{1},g_{2}]}_{\ol{X}_{k,t}}}^{4}\\
\lesssim& \n{w_{l}\pa_{x}\wt{f}_{0}}_{L_{x,v}^{\infty}}+\al \n{g_{2}}_{\ol{X}_{1,t}}+
\lrs{\sum_{k=0,1}\n{[g_{1},g_{2}]}_{\ol{X}_{k,t}}}^{4}.\notag
\end{align}

Putting together estimates \eqref{equ:main proof,X,L2}, \eqref{equ:main proof,higher,g2}, and \eqref{equ:main proof,higher,g2,square}, we arrive at
\begin{align*}
\sum_{k=0,1}\n{[g_{1},g_{2}]}_{\ol{X}_{k,t}}
\leq& C\sum_{k=0,1}\n{w_{l} \pa_x^{k} \wt{f}_{0}}_{L_{x,v}^{\infty}}+
C\lrs{\sum_{k=0,1}\n{[g_{1},g_{2}]}_{\ol{X}_{k,t}}}^{4}.
\end{align*}
Therefore, provided that $\sum_{k=0,1}\n{w_{l} \pa_x^{k} \wt{f}_{0}}_{L_{x,v}^{\infty}}\ll 1$, we get
\begin{align*}
\sum_{k=0,1}\n{[g_{1},g_{2}]}_{\ol{X}_{k,t}}\leq 2C\sum_{k=0,1}\n{w_{l} \pa_x^{k} \wt{f}_{0}}_{L_{x,v}^{\infty}}.
\end{align*}
A standard continuity argument, together with the local well-poseness Lemma \ref{lemma:lwp,g12}, then allows us to extend the solution globally in time. Consequently, we derive the global stability estimate
\begin{align}\label{equ:global stability,g12}
\sup_{t\in[0,\infty)}\sum_{k=0,1}\n{[g_{1},g_{2}]}_{\ol{X}_{k,t}}\leq 2C\sum_{k=0,1}\n{w_{l} \pa_x^{k} \wt{f}_{0}}_{L_{x,v}^{\infty}}.
\end{align}
Since $e^{3\beta t}F(t,x,e^{\beta t}v)-G(v)=\wt{f}=g_{1}+\mu^{\frac{1}{2}}g_{2}$, estimates \eqref{equ:global stability,estimate,thm,P0}--\eqref{equ:global stability,estimate,thm,P1} follow from the global stability estimate \eqref{equ:global stability,g12}.

Next, we get into the analysis of the long-time behavior of the energy moments.
We recall that
\begin{align*}
U=\begin{pmatrix}
P_{0}^{x}E\\
P_{0}^{x}d_{12}\\
P_{0}^{x}d_{22}
\end{pmatrix},\quad
R=\begin{pmatrix}
0\\
\lra{P_{0}^{x}Q(\wt{f},\wt{f}),v_{1}v_{2}}\\
\lra{P_{0}^{x}Q(\wt{f},\wt{f}),v_{2}v_{2}}
\end{pmatrix},
\end{align*}
and
\begin{align*}
A_{\al}=2\be I+
\begin{pmatrix}
0&2\al &0\\
0&2b_{0} & \al \\
-\frac{2b_{0}}{3}&0&2b_{0}
\end{pmatrix},\quad
Q_{\al}^{-1}A_{\al}Q_{\al}=\operatorname{Diag}(A_{\al})=
\begin{pmatrix}
0&&\\
&\la_{2}&\\
&&\la_{3}
\end{pmatrix},
\end{align*}
where
\begin{align*}
Q_{\al}=
\begin{pmatrix}
6b_{0}& 0& 0\\
0& 1 &1\\
2b_{0}& -i\frac{\sqrt{6}}{6}& i\frac{\sqrt{6}}{6}
\end{pmatrix}+O(\al),\quad
Q_{\al}^{-1}=
\begin{pmatrix}
\frac{1}{6b_{0}}& 0& 0\\
-i\frac{\sqrt{6}}{6}& \frac{1}{2} &i\frac{\sqrt{6}}{2}\\
i\frac{\sqrt{6}}{6}& \frac{1}{2}& -i\frac{\sqrt{6}}{2}
\end{pmatrix}+O(\al).
\end{align*}
For brevity, we set
$S_{\al}(t)=e^{-tA_{\al}}$.
We observe the limit as $t\to \infty$
\begin{align}\label{equ:limit,St}
\lim_{t\to \infty}S_{\al}(t)=&\lim_{t\to \infty}Q_{\al}
\begin{pmatrix}
1&0&0\\
0&e^{-\la_{2}t}&0\\
0&0&e^{-\la_{3}t}
\end{pmatrix}Q_{\al}^{-1}
=Q_{\al}
\begin{pmatrix}
1&0&0\\
0&0&0\\
0&0&0
\end{pmatrix}
Q_{\al}^{-1}\\
=&\begin{pmatrix}
1& 0&0\\
0&0&0\\
\frac{1}{3}&0 &0
\end{pmatrix}+O(\al):=S_{\al}(\infty).\notag
\end{align}
By the Duhamel's formula, we have
\begin{align*}
U(t)=S_{\al}(t)U(0)+\int_{0}^{t}S_{\al}(t-s)R(s)ds.
\end{align*}
By \eqref{equ:estimate,duhamel,R123}, \eqref{equ:estimate,Ri}, \eqref{equ:g2,higher energy} and the global stability estimate \eqref{equ:global stability,g12}, we obtain the exponential decay estimate
\begin{align*}
\sup_{s\in[0,\infty)}e^{2\la_{0}s}\n{R(s)}_{L_{x}^{2}}\lesssim \lrs{\sum_{k=0,1}\n{w_{l} \pa_x^{k} \wt{f}_{0}}_{L_{x,v}^{\infty}}}^{2}\lesssim 1.
\end{align*}
Therefore, we can define the long-time limit state
\begin{align}\label{equ:long-time limit state}
U(\infty):=S_{\al}(\infty)U(0)+\int_{0}^{\infty}S_{\al}(\infty)R(s)ds.
\end{align}

Taking the difference between $U(t)$ and $U(\infty)$ yields
\begin{align*}
U(t)-U(\infty)=I_{1}+I_{2}+I_{3},
\end{align*}
where
\begin{align*}
I_{1}=&(S_{\al}(t)-S_{\al}(\infty))U(0),\\
I_{2}=&\int_{0}^{t}\lrc{S_{\al}(t-s)-S_{\al}(\infty)}R(s)ds,\\
I_{3}=&\int_{t}^{\infty}S_{\al}(\infty)R(s)ds.
\end{align*}
From \eqref{equ:limit,St}, we have
\begin{align*}
\n{S_{\al}(t)-S_{\al}(\infty)}_{op}\lesssim e^{-(\operatorname{Re}\la_{2}) t}+e^{-(\operatorname{Re}\la_{3}) t}\lesssim e^{-\la_{0}t},
\end{align*}
where we have used that $\la_{0}\leq \operatorname{Re}\la_{i}=2b_{0}$ for $i=2,3$. Then we estimate each term $I_{i}$ as follows
\begin{align*}
|I_{1}|\lesssim& \n{S_{\al}(t)-S_{\al}(\infty)}_{op}\abs{U(0)}\lesssim e^{-\la_{0}t},\\
|I_{2}|\lesssim& \int_{0}^{t}\n{S_{\al}(t-s)-S_{\al}(\infty)}_{op}|R(s)|ds
\lesssim\int_{0}^{t}e^{-\la_{0}(t-s)}e^{-2\la_{0}s}ds \leq e^{-\la_{0}t},\\
|I_{3}|\lesssim& \int_{t}^{\infty}|R(s)|ds\lesssim \int_{t}^{\infty}e^{-2\la_{0}s}ds\leq e^{-\la_{0}t}.
\end{align*}
Thus, we conclude that
\begin{align}\label{equ:exconvergence,energy moment}
\abs{U(t)-U(\infty)}\lesssim e^{-\la_{0}t}.
\end{align}
By the definitions \eqref{equ:def,energy moment} and \eqref{equ:hydrodynamic moments}, we have $\mathbf{E}_{\al}(t)=P_{0}^{x}E(t)$, which is the first component of $U(t)$. Therefore, by \eqref{equ:exconvergence,energy moment}, we immediately obtain $\mathbf{E}_{\al}(\infty)=P_{0}^{x}E(\infty)$ and
\eqref{equ:exconvergence,long-time,state}. Multiplying both sides of \eqref{equ:exconvergence,long-time,state} by $e^{2\beta t}$, with $\beta\ll \la_{0}$, we arrive at \eqref{equ:long-time,state,energy,growth}.

Finally, we address the non-negativity of the global solution $F(t)$ via a standard iterative method (see
for example
\cite[Section 4]{Guo03}). We consider the iterative scheme defined by
\begin{equation*}
\left\{
\begin{aligned}
\pa_{t}F^{n+1} + v\cdot \nabla_{x}F^{n+1} - \alpha v_{2}\pa_{v_{1}}F^{n+1}+Q^{-}(F^{n+1},F^{n}) =& Q^{+}(F^{n}, F^{n}),\\
F^{n+1}(0,x,v)=&F_{0}(x,v)\geq 0,
\end{aligned}
\right.
\end{equation*}
which yields that $F^{n+1}(t,x,v)\geq 0$. Following an argument analogous to the local well-posedness theory established in Section \ref{sec:Local Well-posedness}, we deduce that the solution $F(t)$ is non-negative on a local time interval.
Since the non-negativity is a local property, we can then bootstrap to fill the entire time interval $[0,\infty)$.

Hence, we have completed the proof of Theorem \ref{thm:main theorem}.
\end{proof}

\appendix
\section{Estimates for Collision Operators}\label{section:Estimates for Collision Operators}
In this section, we collect useful and well-established estimates concerning the collision operators, which are employed in our analysis.
\begin{lemma}\label{Lemma,L,K,estimates}
Let the linearised operators $L$ and $K$ be given by \eqref{equ:notation,L,K}--\eqref{equ:notation,K}.
In the Maxwell molecular case, there is a constant $\delta_0>0$ such that
\begin{align}\label{equ,L,lower bound}
\langle L f, f\rangle=\left\langle L \mathbf{P}_{1} f, \mathbf{P}_{1} f\right\rangle \geq \delta_0\left\|\mathbf{P}_{1} f\right\|_{L_{v}^{2}}^2.
\end{align}
The operator $K$ can be expressed as an integral operator of the form
\begin{align*}
K f(v)=\int_{\mathbb{R}^3} k\left(v, v_*\right) f\left(v_*\right) d v_*
\end{align*}
with the kernel function satisfying
\begin{align*}
\left|k\left(v, v_*\right)\right| \leq C\lrs{1+\left|v-v_*\right|^{-2}} e^{-\frac{1}{8}\left|v-v_*\right|^2-\frac{1}{8} \frac{\left| \left| v\right|^2-\left|v_*\right|^2\right|^2}{\left|v-v_*\right|^2}} .
\end{align*}
Moreover, for $l\geq0$, it holds that
\begin{align}\label{equ:kernel estimate,K,pointwise}
w_{l}(v) |k\left(v, v_*\right)| w_{-l}\left(v_*\right)\leq C_{l}(1+|v-v_{*}|^{-2})e^{-\frac{|v-v_{*}|^{2}}{16}}
\end{align}
and
\begin{align}\label{equ:kernel estimate,K,L1}
\int_{\mathbb{R}^3} w_{l}(v) |k\left(v, v_*\right)| w_{-l}\left(v_*\right) e^{\frac{\delta\left|v-v_*\right|^2}{8}} d v_* \leq \frac{C_{\delta,l}}{1+|v|}
\end{align}
for $\delta \geq0$ small enough.
\end{lemma}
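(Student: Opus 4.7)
The lemma gathers several classical estimates for the linearized Maxwell molecular collision operator, so the plan is to prove them by assembling standard Grad-type arguments, with care needed for the weighted integral bound \eqref{equ:kernel estimate,K,L1}.

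For the coercivity bound \eqref{equ,L,lower bound}, I would first observe that the operator $L$ defined in \eqref{equ:notation,Lg} and \eqref{equ:notation,L,K} is self-adjoint and non-negative on $L^2_v$, with kernel exactly equal to the span of the collision invariants $\{\sqrt{\mu},\, v_i\sqrt{\mu},\, |v|^2\sqrt{\mu}\}$, i.e. the image of $\mathbf{P}_0$. Since $Kf=\int k(v,v_*)f(v_*)\,dv_*$ is compact on $L^2_v$ (which itself follows from the kernel estimates in the second half of the lemma), and $\nu_0$ is a positive constant for Maxwell molecules, the essential spectrum of $L$ reduces to $\{\nu_0\}$. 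Hence there is a spectral gap $\delta_0>0$ separating $0$ from the remainder of the spectrum on $(\ker L)^\perp$, which yields the claimed inequality. In fact, for Maxwell molecules one has an explicit diagonalization in Burnett (Wang Chang--Uhlenbeck) functions, which pins down the optimal $\delta_0$ in closed form.

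To obtain the integral representation of $K$ and the basic Grad bound on $k(v,v_*)$, I would expand $K$ using \eqref{equ:notation,K}. The loss contribution $Q^-(\sqrt{\mu}f,\mu)/\sqrt{\mu}$ has already been absorbed into the multiplication operator $\nu_0 f$, leaving two gain pieces $\mu^{-1/2}Q^+(\sqrt{\mu}f,\mu)$ and $\mu^{-1/2}Q^+(\mu,\sqrt{\mu}f)$. For each, I would perform the Carleman change of variables, trading the $(v_*,\omega)$ parametrization for the post-collisional variable and integrating out the surface measure on the collision sphere. Combining the two pieces and tracking the Jacobians together with the factor $\mu(v_*)^{1/2}\mu(v')^{1/2}$ (which, after the identity $|v|^2+|v_*|^2=|v'|^2+|v_*'|^2$, recombines into the characteristic exponent $-|v-v_*|^2/8-(|v|^2-|v_*|^2)^2/(8|v-v_*|^2)$), yields the stated pointwise bound with the mild singularity $1+|v-v_*|^{-2}$.

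The weighted pointwise estimate \eqref{equ:kernel estimate,K,pointwise} then follows from the basic Grad bound together with the elementary inequality $\lra{v}^{2l}\leq C_l\bigl(\lra{v_*}^{2l}+|v-v_*|^{2l}\bigr)$, which allows the polynomial ratio $w_l(v)/w_l(v_*)$ to be absorbed by trading a factor $e^{-|v-v_*|^2/16}$ out of the full Gaussian, degrading the constant from $1/8$ to $1/16$. The main remaining task is the $L^1$ bound \eqref{equ:kernel estimate,K,L1}, where the $1/(1+|v|)$ decay must be extracted. My plan is to change variables to the relative velocity $\eta=v-v_*$, decompose $\eta=\eta_\parallel (v/|v|)+\eta_\perp$, and use $|v|^2-|v_*|^2=2v\cdot\eta-|\eta|^2$. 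The exponent then splits as $-(1-\delta)|\eta|^2/8-(2|v|\eta_\parallel-|\eta|^2)^2/(8|\eta|^2)$; performing the one-dimensional Gaussian integration in $\eta_\parallel$ first produces a factor proportional to $|\eta|/(1+|v|)$, which combined with the remaining transverse Gaussian and the $|\eta|^{-2}$ singularity still yields an integrable expression of the desired size. The hard part will be bookkeeping to ensure $\delta$ can be absorbed into the Gaussian decay while the near-origin singularity $|\eta|^{-2}$ remains locally integrable in $\R^3$; the regime $|v|\leq 1$ is trivial by dominated convergence.
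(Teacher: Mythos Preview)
Your proposal is correct and outlines precisely the standard arguments (Grad kernel computation via Carleman representation, compactness of $K$ plus spectral gap for coercivity, and the parallel/perpendicular splitting of $\eta=v-v_*$ for the $1/(1+|v|)$ decay) that underlie these estimates. The paper itself does not give a proof at all: it simply declares the estimates well-known and cites \cite{CIP94,Guo10}, so your sketch is in fact more detailed than what appears there, and is exactly the content one would find upon following those references.
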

\begin{proof}
These collision estimates are well-known.
See for example \cite{CIP94,Guo10}.
\end{proof}

\begin{lemma}\label{lemma:Gamma,bilinear estimate}
Let the collision operator $\Gamma(f,g)$ be given by \eqref{equ:notation,Gamma}.
 In the Maxwell molecular case, for $l \geq 0$, we have
\begin{align}
\left\|w_{l}  \Gamma(f, g)\right\|_{L_v^2} \leq& C \left\|w_{l}  f\right\|_{L_v^2}\left\|w_{l} g\right\|_{L_v^2},
\label{equ:Gamma,bilinear estimate,L2}\\
\left\|w_{l}  \Gamma(f, g)\right\|_{L_{v}^{\infty}} \leq& C \left\|w_{l}  f\right\|_{L_{v}^{\infty}}\left\|w_{l} g\right\|_{L_{v}^{\infty}}.
\label{equ:Gamma,bilinear estimate,Linf}
\end{align}
 In particular, we have
\begin{align}
\n{w_{l}Kf}_{L_{v}^{\infty}}\leq C\n{w_{l}f}_{L_{v}^{\infty}},
\end{align}
where $Kf=\nu_{0}f-Lf$ is defined by \eqref{equ:notation,L,K}.
\end{lemma}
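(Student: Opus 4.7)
The plan is to split $\Gamma=\Gamma^{+}-\Gamma^{-}$ into gain and loss parts and treat each separately, exploiting two structural properties of the Maxwell molecule model: the collision invariance $\mu(v')\mu(v_{*}')=\mu(v)\mu(v_{*})$ (which lets $\mu^{-1/2}(v)$ be absorbed cleanly in $\Gamma^{+}$), and the fact that $B_{0}$ depends only on the deflection angle and satisfies $\int_{\mathbb{S}^{2}}B_{0}(\cos\theta)\,d\omega<\infty$ by the Grad cutoff.

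For the loss term I would first observe that
\[
\Gamma^{-}(f,g)(v)=f(v)\int_{\R^{3}\times\mathbb{S}^{2}}B_{0}(\cos\theta)\sqrt{\mu(v_{*})}\,g(v_{*})\,d\omega dv_{*},
\]
because the $\mu^{-1/2}(v)\sqrt{\mu(v)}$ collapses in the loss integral. For the $L_{v}^{\infty}$ estimate the inner integral is bounded by $\|w_{l}g\|_{L_{v}^{\infty}}\int\sqrt{\mu(v_{*})}w_{-l}(v_{*})dv_{*}\lesssim\|w_{l}g\|_{L_{v}^{\infty}}$ since the Gaussian kills any polynomial weight; for the $L_{v}^{2}$ estimate I would instead apply Cauchy--Schwarz against $\mu^{1/2}w_{-l}\in L_{v}^{2}$, producing $\|w_{l}g\|_{L_{v}^{2}}$, then pull out $w_{l}(v)f(v)$ in $L_{v}^{2}$ or $L_{v}^{\infty}$ as appropriate.

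The main obstacle is the gain term, where after using $\mu^{-1/2}(v)\sqrt{\mu(v')\mu(v_{*}')}=\sqrt{\mu(v_{*})}$ I get
\[
\Gamma^{+}(f,g)(v)=\int_{\R^{3}\times\mathbb{S}^{2}}B_{0}(\cos\theta)\sqrt{\mu(v_{*})}\,f(v')g(v_{*}')\,d\omega dv_{*}.
\]
The key is the weight inequality $w_{l}(v)\leq w_{l}(v')w_{l}(v_{*}')$ for $l\geq 0$, which follows from $|v|^{2}\leq|v|^{2}+|v_{*}|^{2}=|v'|^{2}+|v_{*}'|^{2}$ and $(1+a+b)\leq(1+a)(1+b)$. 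With this, the $L_{v}^{\infty}$ bound is immediate: $w_{l}(v)|\Gamma^{+}(f,g)(v)|\lesssim\|w_{l}f\|_{L_{v}^{\infty}}\|w_{l}g\|_{L_{v}^{\infty}}\int B_{0}\sqrt{\mu(v_{*})}d\omega dv_{*}$, and the remaining integral is a finite constant. For the $L_{v}^{2}$ bound I would apply Cauchy--Schwarz in the $(v_{*},\omega)$ variable against the measure $B_{0}\sqrt{\mu(v_{*})}d\omega dv_{*}$, then integrate in $v$ and change variables $(v,v_{*})\mapsto(v',v_{*}')$ at fixed $\omega$ — this is a unit--Jacobian involution — which converts the integral into $\int B_{0}w_{l}(v')^{2}w_{l}(v_{*}')^{2}|f(v')|^{2}|g(v_{*}')|^{2}\sqrt{\mu(v_{*})}\,d\omega dv' dv_{*}'$ and closes the estimate as $\|w_{l}f\|_{L_{v}^{2}}^{2}\|w_{l}g\|_{L_{v}^{2}}^{2}$.

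Finally, for the $K$ estimate I would not recompute anything: writing $Kf(v)=\int k(v,v_{*})f(v_{*})\,dv_{*}$ and using the kernel bound \eqref{equ:kernel estimate,K,L1} from Lemma \ref{Lemma,L,K,estimates} with $\delta=0$, I insert $w_{l}(v_{*})w_{-l}(v_{*})=1$ and bound $w_{l}(v)|Kf(v)|\leq\|w_{l}f\|_{L_{v}^{\infty}}\int w_{l}(v)|k(v,v_{*})|w_{-l}(v_{*})dv_{*}\lesssim(1+|v|)^{-1}\|w_{l}f\|_{L_{v}^{\infty}}$, which yields the claim. The most delicate step is the $L_{v}^{2}$ gain estimate because of the pre/post--collision change of variables bookkeeping; everything else is essentially routine once the weight transfer $w_{l}(v)\lesssim w_{l}(v')w_{l}(v_{*}')$ is in place.
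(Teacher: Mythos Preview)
Your proposal is correct and follows the standard route. The paper itself does not give a proof: it simply states that \eqref{equ:Gamma,bilinear estimate,L2} and \eqref{equ:Gamma,bilinear estimate,Linf} are ``similar as that of \cite[Lemma 2.3]{Guo02} and \cite[Lemma 5]{Guo10}'' and omits the details, so there is no line-by-line comparison to make; what you wrote is essentially the argument behind those references (gain/loss splitting, the weight transfer $w_{l}(v)\leq w_{l}(v')w_{l}(v_{*}')$, Cauchy--Schwarz in $(v_{*},\omega)$, and the unit-Jacobian pre/post-collision change of variables).

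Two minor remarks. In the $L_{v}^{2}$ gain estimate, after the change of variables the factor $\sqrt{\mu(v_{*})}$ becomes a function of the new integration variables; you should say explicitly that you bound it by a constant before factoring the integral (this is harmless for Maxwell molecules since $\int_{\mathbb{S}^{2}}B_{0}\,d\omega<\infty$). For the $K$ estimate, the paper's ``in particular'' is meant in the sense that $Kf=\Gamma(f,\sqrt{\mu})+\Gamma^{+}(\sqrt{\mu},f)$ and $w_{l}\sqrt{\mu}\in L_{v}^{\infty}$, so the bound follows directly from \eqref{equ:Gamma,bilinear estimate,Linf}; your route via the kernel estimate \eqref{equ:kernel estimate,K,L1} from Lemma~\ref{Lemma,L,K,estimates} is equally valid and gives the same conclusion.
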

\begin{proof}
The proof of \eqref{equ:Gamma,bilinear estimate,L2} and \eqref{equ:Gamma,bilinear estimate,Linf} is similar as that of
\cite[Lemma 2.3]{Guo02}
and \cite[Lemma 5]{Guo10}, respectively. We therefore omit the details for brevity.
\end{proof}

\begin{lemma}\label{lemma:Q,bilinear estimate,Lp}
 In the Maxwell molecular case, it holds that
 \begin{align}
\n{Q^{-}(f,g)}_{L_{v}^{p}}\lesssim& \n{f}_{L_{v}^{1}}\n{g}_{L_{v}^{p}},\label{equ:loss term,estimate,f,g} \quad \text{for $p\in[1,\infty]$},\\
\n{Q^{+}(f,g)}_{L_{v}^{p}}\lesssim& \n{f}_{L_{v}^{1}}\n{g}_{L_{v}^{p}},\label{equ:gain term,estimate,f,g}\quad \text{for $p\in[1,\infty]$}.
\end{align}
In particular, we have
\begin{align}
\n{K_{\mu}f}_{L_{v}^{2}}\lesssim& \n{f}_{L_{v}^{1}}+\n{f}_{L_{v}^{2}},\label{equ:Kmu,estimate,Lp}
\end{align}
where $K_{\mu}f=Q(f,\mu)+Q^{+}(\mu,f)$ is defined by \eqref{equ:notation,Kmu}.
Moreover, for $l>\frac{3}{2}$, we have
\begin{align}\label{equ:closed estimate,weighted}
\left\|w_{l}  Q^{\pm}\left(f, g\right)\right\|_{L_{v}^{\infty}} \lesssim& \left\|w_{l} f\right\|_{L_{v}^{\infty}}\left\|w_{l} g\right\|_{L_{v}^{\infty}},\\
 \n{w_{l}K_{\mu}f}_{L_{v}^{\infty}}\lesssim &\n{w_{l}f}_{L_{v}^{\infty}}.\label{equ:Kmu,estimate,Linf}
\end{align}
\end{lemma}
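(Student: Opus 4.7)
The plan is to exploit the distinctive structure of the Maxwell molecular kernel. Since $B_0(\cos\theta)$ depends only on the deflection angle and satisfies $|B_0(\cos\theta)|\lesssim|\cos\theta|$, the angular integral $\nu_0^*:=\int_{\mathbb{S}^2}B_0(\cos\theta)\,d\omega$ is a finite constant independent of the relative velocity $v_*-v$. This trivializes the loss term via the explicit pointwise identity
\[
Q^-(f,g)(v)=\nu_0^*\,f(v)\int_{\mathbb{R}^3}g(v_*)\,dv_*,
\]
from which the $L^p$ bound \eqref{equ:loss term,estimate,f,g} follows immediately by taking $L_v^p$ norms on both sides. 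The weighted $L^\infty$ bound on $Q^-$ in \eqref{equ:closed estimate,weighted} is equally direct upon writing $|g(v_*)|\leq\n{w_{l}g}_{L_v^\infty}w_{-l}(v_*)$ and invoking $\int_{\mathbb{R}^3}w_{-l}(v_*)\,dv_*<\infty$ for $l>3/2$.

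For the gain term $Q^+$ I would treat the endpoints $p=1$ and $p=\infty$ separately and then interpolate by Riesz--Thorin. The $L^1$ estimate follows from the unit Jacobian of the pre-post collision change of variables $(v,v_*,\omega)\mapsto(v',v_*',\omega)$, yielding $\n{Q^+(f,g)}_{L_v^1}\leq C\n{f}_{L_v^1}\n{g}_{L_v^1}$. For the $L^\infty$ bound I would employ Grad's Carleman representation together with the elementary consequence of energy conservation
\[
w_l(v)\leq w_l(v')\,w_l(v_*'),
\]
which follows from $|v|^2\leq|v|^2+|v_*|^2=|v'|^2+|v_*'|^2$. Inserting $|f(v')|\leq\n{w_{l}f}_{L_v^\infty}w_{-l}(v')$ and $|g(v_*')|\leq\n{w_{l}g}_{L_v^\infty}w_{-l}(v_*')$ reduces matters to the scalar estimate $w_l(v)\int B_0\,w_{-l}(v')w_{-l}(v_*')\,d\omega\,dv_*\lesssim 1$ uniformly in $v$; this is the classical Grad cut-off computation and holds precisely for $l>3/2$, thereby establishing both \eqref{equ:gain term,estimate,f,g} and the $Q^+$ part of \eqref{equ:closed estimate,weighted}.

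Finally, the estimates on $K_\mu f=Q^+(f,\mu)-Q^-(f,\mu)+Q^+(\mu,f)$ are direct specializations of the bilinear estimates above with one argument fixed at the Gaussian $\mu$. Since $\n{\mu}_{L_v^1}=1$, $\n{\mu}_{L_v^2}\lesssim 1$, and $\n{w_{l}\mu}_{L_v^\infty}\lesssim 1$ for every $l\geq 0$, both \eqref{equ:Kmu,estimate,Lp} and \eqref{equ:Kmu,estimate,Linf} follow at once. The only genuine technical obstacle in the whole argument is the weighted $L^\infty$ bound for $Q^+$, where one must verify that the residual angular--radial kernel integral is uniformly controlled in $v$; this is the standard mechanism forcing the threshold $l>3/2$, and everything else reduces to routine manipulation once the Maxwell molecular structure has been exploited as described.
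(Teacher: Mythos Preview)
Your approach is more explicit than the paper's, which simply cites \cite{ACG10} or \cite{MV04} for the gain-term $L^p$ bound and declares the remaining estimates to follow. Your treatment of the loss term and of the weighted $L^\infty$ bound \eqref{equ:closed estimate,weighted} via $w_l(v)\leq w_l(v')\,w_l(v_*')$ is standard and correct.

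There is, however, a gap in your interpolation argument for \eqref{equ:gain term,estimate,f,g}. The weighted estimate you establish is $\|w_lQ^+(f,g)\|_{L^\infty}\lesssim\|w_lf\|_{L^\infty}\|w_lg\|_{L^\infty}$, which is \emph{not} the unweighted endpoint $\|Q^+(f,g)\|_{L^\infty}\lesssim\|f\|_{L^1}\|g\|_{L^\infty}$ that Riesz--Thorin would require in order to interpolate against the $L^1$ bound and produce $\|f\|_{L^1}\|g\|_{L^p}$. Setting $l=0$ in your scalar integral gives $\int B_0\,d\omega\,dv_*=\infty$, so the threshold $l>3/2$ is essential there and the two $L^\infty$ estimates are genuinely different in both form and proof. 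The correct unweighted $L^\infty$ endpoint follows instead from duality with the $L^1$ bound (using the unit-Jacobian pre-post change of variables in the adjoint pairing), or directly from the Young-type convolution inequalities of \cite{ACG10}, which is precisely what the paper invokes.

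A minor point: with the paper's convention $Q^-(F_1,F_2)(v)=F_1(v)\int\!\int B_0\,F_2(v_*)\,d\omega\,dv_*$, your pointwise identity actually yields $\|Q^-(f,g)\|_{L^p}\lesssim\|f\|_{L^p}\|g\|_{L^1}$, with the norms swapped relative to the stated \eqref{equ:loss term,estimate,f,g}; this is harmless for the applications in the paper.
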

\begin{proof}
The loss term estimate \eqref{equ:loss term,estimate,f,g} is straightforward. The gain term estimate \eqref{equ:gain term,estimate,f,g} follows from \cite[Theorem 1]{ACG10} or \cite[Theorem 2.1]{MV04}. The estimates \eqref{equ:loss term,estimate,f,g}--\eqref{equ:gain term,estimate,f,g} then yield estimates \eqref{equ:Kmu,estimate,Lp}--\eqref{equ:closed estimate,weighted}.
\end{proof}

\begin{lemma}[{\hspace{-0.01em}\cite[Proposition 2.1]{DL21}}]\label{lemma:large velocity,K}
For any positive integer $k \geq0$, there is $C>0$ such that for any arbitrarily large $l>0$, there is $M=M(l)>0$ such that it holds that
$$
\sup _{|v| \geq M} w_{l}\left|\nabla_v^k K_{\mu} f\right| \leq \frac{C}{l} \sum_{0 \leq k^{\prime} \leq k}\left\|w_{l} \nabla_v^{k^{\prime}} f\right\|_{L_{v}^{\infty}}.
$$
\end{lemma}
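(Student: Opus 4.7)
My plan is to exploit the Gaussian decay in $v$ intrinsic to $K_\mu f = Q(f,\mu) + Q^+(\mu,f)$, which comes from the Maxwellian $\mu$ appearing in every term. This is the feature that distinguishes $K_\mu$ from the ordinary $K$-operator of Lemma~\ref{Lemma,L,K,estimates} (whose kernel $k(v,v_*)$ decays only in $|v-v_*|$), and it is precisely what produces the $1/l$ smallness at $|v|\ge M(l)$.

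The first step is to derive a Carleman-type integral kernel representation for each gain contribution,
\[ Q^+(f,\mu)(v) + Q^+(\mu,f)(v) = \int_{\R^3} k_\mu(v,v_*)\,f(v_*)\,dv_*, \]
with an explicit kernel $k_\mu$ that inherits from $\mu$ a pointwise bound of the form
\[ |k_\mu(v,v_*)| \lesssim \bigl(1+|v-v_*|^{-2}\bigr)\, e^{-c(|v|^2+|v_*|^2)} \]
for some $c>0$. The loss contribution to $K_\mu$ is local or $\mu$-weighted (in the Maxwell-molecule case $Q^-(f,\mu)(v) = \nu_0 f(v)$ and $Q^-(\mu,f)(v) = \nu_0 \mu(v)\|f\|_1$), and it either decays rapidly on its own (the $\mu(v)\|f\|_1$ piece) or pairs with the $\nu_0 h_1$ damping term already present on the left-hand side of the characteristic equation \eqref{equ:h1,equation} in the main application of the lemma.

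For the derivatives, I would use the Carleman change of variables so that $K_\mu f$ is rewritten as a genuine integral over $v_* \in \R^3$ with $f(v_*)$ appearing undifferentiated in $v$; then every $\pa_v$ falls on $k_\mu$. Each such derivative multiplies $k_\mu$ by a polynomial factor in $(v,v_*)$, which is absorbed by half of the Gaussian, giving
\[ w_l(v)\,|\pa_v^k k_\mu(v,v_*)|\,w_{-l}(v_*) \lesssim_{k,l} \bigl(1+|v-v_*|^{-2}\bigr)\, e^{-c(|v|^2+|v_*|^2)/2}. \]
Using $|f(v_*)| \le w_{-l}(v_*)\|w_l f\|_{L^\infty_v}$ and integrating in $v_*$ leaves a residual factor $e^{-c|v|^2/4}$ (the other half of the Gaussian), yielding
\[ w_l(v)\,|\pa_v^k K_\mu f(v)| \;\le\; C_k\, e^{-c|v|^2/4}\sum_{k'\le k}\|w_l\pa_v^{k'}f\|_{L^\infty_v}. \]
For $|v|\ge M$ this is at most $C_k\, e^{-cM^2/4}\sum_{k'\le k}\|w_l\pa_v^{k'}f\|_{L^\infty_v}$, so choosing $M=M(l)$ such that $e^{-cM^2/4}\le 1/l$ (any $M(l)\ge (4/c)^{1/2}(\log l)^{1/2}$ works, or more crudely $M(l)\ge l$) delivers the claimed $C/l$ bound.

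The main obstacle I expect is the differentiation step: naively, $\pa_v K_\mu f$ pushes $\pa_v$ onto $f$ evaluated at the post-collision velocities $v', v_*'$, which themselves depend on $v$, and iterating this for higher $k$ becomes unwieldy. The Carleman parametrization resolves this by making $f$ depend on only a single integration variable $v_*$, but one must carefully bookkeep the resulting Jacobian and verify that all polynomial factors produced by differentiating the kernel remain dominated by the Gaussian $e^{-c(|v|^2+|v_*|^2)}$, uniformly in $k' \le k$, so that the clean kernel bound above survives through all orders up to $k$.
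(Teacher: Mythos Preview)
The paper does not prove this lemma; it is quoted verbatim from \cite[Proposition 2.1]{DL21}. So there is no in-paper argument to compare against, and the question is whether your sketch stands on its own.

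It does not, because the central pointwise bound you assert for the Carleman kernel of the gain part,
\[
|k_\mu(v,v_*)|\;\lesssim\;(1+|v-v_*|^{-2})\,e^{-c(|v|^2+|v_*|^2)},
\]
is false. If you compute $k_\mu=\mu^{1/2}k\,\mu^{-1/2}$ using the explicit bound on $k$ from Lemma~\ref{Lemma,L,K,estimates}, the exponent collapses (after completing the square) to
\[
|k_\mu(v,v_*)|\;\lesssim\;(1+|v-v_*|^{-2})\,\exp\Bigl(-\tfrac{1}{2}\,\bigl(v\cdot\widehat{u}\bigr)^{2}\Bigr),\qquad \widehat{u}=\frac{v-v_*}{|v-v_*|}.
\]
This decays only in the component of $v$ parallel to $v-v_*$. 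For $v_*=v+w$ with $w\perp v$ the exponential is identically $1$, so there is no residual factor $e^{-c|v|^2}$ to extract, and your choice of $M(l)$ via $e^{-cM^2/4}\le 1/l$ has nothing to act on. Equivalently, your bound would force $K_\mu f$ to have Gaussian decay in $v$ for \emph{every} bounded $f$, which is plainly wrong (take $f\equiv 1$ on a large ball).

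The actual mechanism behind the $C/l$ gain is subtler: one integrates the correct kernel bound against $w_{-l}(v_*)$ and observes that the ``bad'' directions $\widehat{u}$ nearly orthogonal to $v$ occupy an angular slab of width $O(1/|v|)$, so that $\int w_l(v)|k_\mu(v,v_*)|w_{-l}(v_*)\,dv_*\lesssim (1+|v|)^{-1}$; choosing $M(l)\sim l$ then yields the stated $C/l$. This is the line of argument carried out in \cite{DL21}. Your treatment of the loss pieces is also off: with the operator as actually used in \eqref{equ:g1,equation}, the term $-\nu_0 f$ is \emph{already} accounted for on the left-hand side and $K_\mu$ contains only $Q^+(f,\mu)+Q^+(\mu,f)-Q^-(\mu,f)$; the last piece carries an explicit $\mu(v)$ and is harmless. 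Invoking ``it pairs with the damping in the application'' would not prove the lemma as a stand-alone statement in any case.
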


\medskip
\noindent {\bf Acknowledgment:}\,
The research of RJD was partially supported by the General Research Fund (Project No.~14303523) from RGC of Hong Kong and also by the grant from the National Natural Science Foundation of China (Project No.~12425109). The research of SQL was partially supported by the grant from the National Natural Science Foundation of China (Project No.~12325107). The research of SLS was partially supported by the National Key R\&D Program of China (Project No.~2024YFA1015500).

\medskip
\noindent\textbf{Data Availability Statement:}
Data sharing is not applicable to this article as no datasets were generated or analysed during the current study.

\noindent\textbf{Conflict of Interest:}
The authors declare that they have no conflict of interest.


\begin{thebibliography}{99}

\bibitem{ASB02}
L.~Acedo, A.~Santos, and A.~V. Bobylev.
\newblock On the derivation of a high-velocity tail from the
  {B}oltzmann-{F}okker-{P}lanck equation for shear flow.
\newblock {\em J. Statist. Phys.}, 109(5-6):1027--1050, 2002.

\bibitem{ACG10}
R.~J. Alonso, E.~Carneiro, and I.~M. Gamba.
\newblock Convolution inequalities for the {B}oltzmann collision operator.
\newblock {\em Comm. Math. Phys.}, 298(2):293--322, 2010.

\bibitem{AEP87}
L.~Arkeryd, R.~Esposito, and M.~Pulvirenti.
\newblock The {B}oltzmann equation for weakly inhomogeneous data.
\newblock {\em Comm. Math. Phys.}, 111(3):393--407, 1987.

\bibitem{BGM17}
J.~Bedrossian, P.~Germain, and N.~Masmoudi.
\newblock On the stability threshold for the 3{D} {C}ouette flow in {S}obolev
  regularity.
\newblock {\em Ann. of Math. (2)}, 185(2):541--608, 2017.

\bibitem{BGM19}
J.~Bedrossian, P.~Germain, and N.~Masmoudi.
\newblock Stability of the {C}ouette flow at high {R}eynolds numbers in two
  dimensions and three dimensions.
\newblock {\em Bull. Amer. Math. Soc. (N.S.)}, 56(3):373--414, 2019.

\bibitem{BM15}
J.~Bedrossian and N.~Masmoudi.
\newblock Inviscid damping and the asymptotic stability of planar shear flows
  in the 2{D} {E}uler equations.
\newblock {\em Publ. Math. Inst. Hautes \'{E}tudes Sci.}, 122:195--300, 2015.

\bibitem{BMV16}
J.~Bedrossian, N.~Masmoudi, and V.~Vicol.
\newblock Enhanced dissipation and inviscid damping in the inviscid limit of
  the {N}avier-{S}tokes equations near the two dimensional {C}ouette flow.
\newblock {\em Arch. Ration. Mech. Anal.}, 219(3):1087--1159, 2016.

\bibitem{BNV20}
A.~Bobylev, A.~Nota, and J.~J.~L. Vel\'{a}zquez.
\newblock Self-similar asymptotics for a modified {M}axwell-{B}oltzmann
  equation in systems subject to deformations.
\newblock {\em Comm. Math. Phys.}, 380(1):409--448, 2020.

\bibitem{BCS96}
A.~V. Bobylev, G.~L. Caraffini, and G.~Spiga.
\newblock On group invariant solutions of the {B}oltzmann equation.
\newblock {\em J. Math. Phys.}, 37(6):2787--2795, 1996.

\bibitem{Caf80}
R.~E. Caflisch.
\newblock The {B}oltzmann equation with a soft potential. {I}. {L}inear,
  spatially-homogeneous.
\newblock {\em Comm. Math. Phys.}, 74(1):71--95, 1980.

\bibitem{Cer89}
C.~Cercignani.
\newblock Existence of homoenergetic affine flows for the {B}oltzmann equation.
\newblock {\em Arch. Rational Mech. Anal.}, 105(4):377--387, 1989.

\bibitem{Cer01}
C.~Cercignani.
\newblock Shear flow of a granular material.
\newblock {\em J. Statist. Phys.}, 102(5-6):1407--1415, 2001.

\bibitem{Cer02}
C.~Cercignani.
\newblock The {B}oltzmann equation approach to the shear flow of a granular
  material.
\newblock volume 360, pages 407--414. 2002.
\newblock Discrete modelling and simulation of fluid dynamics (Corse, 2001).

\bibitem{CIP94}
C.~Cercignani, R.~Illner, and M.~Pulvirenti.
\newblock {\em The mathematical theory of dilute gases}, volume 106 of {\em
  Applied Mathematical Sciences}.
\newblock Springer-Verlag, New York, 1994.

\bibitem{Chr86}
D.~Christodoulou.
\newblock Global solutions of nonlinear hyperbolic equations for small initial
  data.
\newblock {\em Comm. Pure Appl. Math.}, 39(2):267--282, 1986.

\bibitem{DL21}
R.~Duan and S.~Liu.
\newblock The {B}oltzmann equation for uniform shear flow.
\newblock {\em Arch. Ration. Mech. Anal.}, 242(3):1947--2002, 2021.

\bibitem{DL25}
R.~Duan and S.~Liu.
\newblock Uniform shear flow via the {B}oltzmann equation with hard potentials.
\newblock {\em Discrete Contin. Dyn. Syst.}, 45(10):4071--4118, 2025.

\bibitem{DLSY24}
R.~Duan, S.~Liu, R.~M. Strain, and A.~Yang.
\newblock {The 3D kinetic Couette flow via the Boltzmann equation in the
  diffusive limit}.
\newblock {\em arXiv preprint arXiv:2409.00311}, 2024.

\bibitem{Gal58}
V.~S. Galkin.
\newblock On a class of solutions of {G}rad's moment equations.
\newblock {\em J. Appl. Math. Mech.}, 22:532--536 (386--389 Prikl. Mat. Meh.),
  1958.

\bibitem{GS03}
V.~Garz\'{o} and A.~Santos.
\newblock {\em Kinetic theory of gases in shear flows}, volume 131 of {\em
  Fundamental Theories of Physics}.
\newblock Kluwer Academic Publishers Group, Dordrecht, 2003.
\newblock Nonlinear transport, With a foreword by James W. Dufty.

\bibitem{Guo02}
Y.~Guo.
\newblock The {V}lasov-{P}oisson-{B}oltzmann system near {M}axwellians.
\newblock {\em Comm. Pure Appl. Math.}, 55(9):1104--1135, 2002.

\bibitem{Guo03}
Y.~Guo.
\newblock Classical solutions to the {B}oltzmann equation for molecules with an
  angular cutoff.
\newblock {\em Arch. Ration. Mech. Anal.}, 169(4):305--353, 2003.

\bibitem{Guo10}
Y.~Guo.
\newblock Decay and continuity of the {B}oltzmann equation in bounded domains.
\newblock {\em Arch. Ration. Mech. Anal.}, 197(3):713--809, 2010.

\bibitem{IJ20}
A.~D. Ionescu and H.~Jia.
\newblock Inviscid damping near the {C}ouette flow in a channel.
\newblock {\em Comm. Math. Phys.}, 374(3):2015--2096, 2020.

\bibitem{IJ23}
A.~D. Ionescu and H.~Jia.
\newblock Non-linear inviscid damping near monotonic shear flows.
\newblock {\em Acta Math.}, 230(2):321--399, 2023.

\bibitem{IJ23icm}
A.~D. Ionescu and H.~Jia.
\newblock On the nonlinear stability of shear flows and vortices.
\newblock In {\em I{CM}---{I}nternational {C}ongress of {M}athematicians.
  {V}ol. 5. {S}ections 9--11}, pages 3776--3799. EMS Press, Berlin, [2023]
  \copyright 2023.

\bibitem{JNV19long}
R.~D. James, A.~Nota, and J.~J.~L. Vel\'{a}zquez.
\newblock Long-time asymptotics for homoenergetic solutions of the {B}oltzmann
  equation: collision-dominated case.
\newblock {\em J. Nonlinear Sci.}, 29(5):1943--1973, 2019.

\bibitem{JNV19}
R.~D. James, A.~Nota, and J.~J.~L. Vel\'{a}zquez.
\newblock Self-similar profiles for homoenergetic solutions of the {B}oltzmann
  equation: particle velocity distribution and entropy.
\newblock {\em Arch. Ration. Mech. Anal.}, 231(2):787--843, 2019.

\bibitem{JNV20}
R.~D. James, A.~Nota, and J.~J.~L. Vel\'{a}zquez.
\newblock Long time asymptotics for homoenergetic solutions of the {B}oltzmann
  equation. {H}yperbolic-dominated case.
\newblock {\em Nonlinearity}, 33(8):3781--3815, 2020.

\bibitem{Kep23}
B.~Kepka.
\newblock Self-similar profiles for homoenergetic solutions of the {B}oltzmann
  equation for non-cutoff {M}axwell molecules.
\newblock {\em J. Stat. Phys.}, 190(2):Paper No. 27, 22, 2023.

\bibitem{Kep24}
B.~Kepka.
\newblock Longtime behavior of homoenergetic solutions in the collision
  dominated regime for hard potentials.
\newblock {\em Pure Appl. Anal.}, 6(2):415--454, 2024.

\bibitem{Kla84}
S.~Klainerman.
\newblock Long time behaviour of solutions to nonlinear wave equations.
\newblock In {\em Proceedings of the {I}nternational {C}ongress of
  {M}athematicians, {V}ol. 1, 2 ({W}arsaw, 1983)}, pages 1209--1215. PWN,
  Warsaw, 1984.

\bibitem{Kla86}
S.~Klainerman.
\newblock The null condition and global existence to nonlinear wave equations.
\newblock In {\em Nonlinear systems of partial differential equations in
  applied mathematics, {P}art 1 ({S}anta {F}e, {N}.{M}., 1984)}, volume~23 of
  {\em Lectures in Appl. Math.}, pages 293--326. Amer. Math. Soc., Providence,
  RI, 1986.

\bibitem{MZ24}
N.~Masmoudi and W.~Zhao.
\newblock Nonlinear inviscid damping for a class of monotone shear flows in a
  finite channel.
\newblock {\em Ann. of Math. (2)}, 199(3):1093--1175, 2024.

\bibitem{MSG96}
J.~M. Montanero, A.~Santos, and V.~Garz\'{o}.
\newblock {Monte Carlo simulation of the Boltzmann equation for uniform shear
  flow}.
\newblock {\em Physics of Fluids}, 8(7):1981--1983, 07 1996.

\bibitem{MV04}
C.~Mouhot and C.~Villani.
\newblock Regularity theory for the spatially homogeneous {B}oltzmann equation
  with cut-off.
\newblock {\em Arch. Ration. Mech. Anal.}, 173(2):169--212, 2004.

\bibitem{SH01}
P.~J. Schmid and D.~S. Henningson.
\newblock {\em Stability and transition in shear flows}, volume 142 of {\em
  Applied Mathematical Sciences}.
\newblock Springer-Verlag, New York, 2001.

\bibitem{Tru56}
C.~Truesdell.
\newblock On the pressures and the flux of energy in a gas according to
  {M}axwell's kinetic theory. {II}.
\newblock {\em J. Rational Mech. Anal.}, 5:55--128, 1956.

\bibitem{TM80}
C.~Truesdell and R.~G. Muncaster.
\newblock {\em Fundamentals of {M}axwell's kinetic theory of a simple monatomic
  gas}, volume~83 of {\em Pure and Applied Mathematics}.
\newblock Academic Press, Inc. [Harcourt Brace Jovanovich, Publishers], New
  York-London, 1980.
\newblock Treated as a branch of rational mechanics.

\bibitem{WZ21}
D.~Wei and Z.~Zhang.
\newblock Transition threshold for the 3{D} {C}ouette flow in {S}obolev space.
\newblock {\em Comm. Pure Appl. Math.}, 74(11):2398--2479, 2021.

\bibitem{WZ23}
D.~Wei and Z.~Zhang.
\newblock Hydrodynamic stability at high {R}eynolds number.
\newblock In {\em I{CM}---{I}nternational {C}ongress of {M}athematicians.
  {V}ol. 5. {S}ections 9--11}, pages 3902--3922. EMS Press, Berlin, [2023]
  \copyright 2023.

\bibitem{WZZ18}
D.~Wei, Z.~Zhang, and W.~Zhao.
\newblock Linear inviscid damping for a class of monotone shear flow in
  {S}obolev spaces.
\newblock {\em Comm. Pure Appl. Math.}, 71(4):617--687, 2018.

\end{thebibliography}

\end{document}